\newtheorem{theorem}{Theorem}[section]
\newtheorem*{theorem*}{Theorem}{\,}
\newtheorem{corollary}{Corollary}[section]
\newtheorem{lemma}{Lemma}[section]
\newtheorem*{definition*}{Definition}{}
\theoremstyle{definition}
\numberwithin{equation}{section}
\newcommand{\con}{\mathscr{C}}
\newcommand{\bphi}{\bar{\phi}}
\newcommand{\inabla}{\bnabla}
\newcommand{\muf}{\mu}
\newcommand{\amc}{\mathcal{H}}
\newcommand{\ac}{\operatorname{\eta}}
\newcommand{\cone}{\operatorname{C}}
\newcommand{\ak}{K}
\newcommand{\aks}{\kappa}
\newcommand{\blap}{\mathscr{L}}
\newcommand{\bric}{R(\infty)}
\newcommand{\nric}{R(N)}
\newcommand{\std}{\mathbb{V}^{\ast}}
\newcommand{\simplex}{\mathscr{S}}
\newcommand{\orthant}{\mathscr{Q}}
\newcommand{\tube}{\mathscr{T}}
\newcommand{\Om}{\Omega}
\newcommand{\lmg}{\mathscr{L}}
\newcommand{\cy}{\varphi}
\newcommand{\lc}{\Sigma}
\renewcommand{\H}{\mathsf{H}}
\newcommand{\cut}{\mathsf{Cut}}
\newcommand{\om}{\omega}
\newcommand{\hess}{\operatorname{\mathsf{Hess}}}
\newcommand{\vol}{\mathsf{vol}}
\newcommand{\delbar}{\bar{\del}}
\newcommand{\ka}{\kappa}
\newcommand{\sW}{\mathbb{W}}
\newcommand{\Id}{\text{Id}}
\newcommand{\sff}{\Pi}
\newcommand{\dum}{\,\cdot\,\,}
\newcommand{\Ga}{\Gamma}
\newcommand{\Q}{\mathcal{Q}}
\newcommand{\nm}{\mathsf{W}}
\newcommand{\nk}{\mathsf{K}}
\newcommand{\lap}{\Delta}
\renewcommand{\j}{\mathsf{i}}
\newcommand{\sOmega}{\mathbb{\Omega}}
\newcommand{\sG}{\mathbb{G}}
\newcommand{\tphi}{\tilde{\phi}}
\newcommand{\la}{\lambda}
\newcommand{\ep}{\epsilon}
\newcommand{\reat}{\mathbb{R}^{\times}}
\newcommand{\reap}{\mathbb{R}^{+}}
\newcommand{\cinf}{C^{\infty}}
\newcommand{\eno}{\text{End}}
\newcommand{\si}{\sigma}
\newcommand{\pr}{\partial}
\newcommand{\bnabla}{\bar{\nabla}}
\newcommand{\en}{[\nabla]}
\newcommand{\lie}{\mathfrak{L}}
\newcommand{\Aff}{\mathbb{Aff}}
\newcommand{\cp}{\mathcal{C}}
\newcommand{\ste}{\mathbb{V}}
\newcommand{\stez}{\mathbb{V} \setminus \{0\}}
\newcommand{\projp}{\operatorname{\mathbb{P}^{+}}}
\newcommand{\al}{\alpha}
\newcommand{\be}{\beta}
\newcommand{\hnabla}{\widehat{\nabla}}
\newcommand{\eul}{\mathbb{X}}
\newcommand{\nat}{\mathbb{N}}
\DeclareMathOperator{\Aut}{Aut}
\newcommand{\del}{\partial}
\newcommand{\tensor}{\otimes}
\newcommand{\rea}{\mathbb R}
\newcommand{\com}{\mathbb C}
\begin{document}
\title[Schwarz lemma for K\"ahler affine metrics]{A Schwarz lemma for K\"ahler affine metrics and the canonical potential of a proper convex cone}
\author{Daniel J.~F. Fox} 
\address{Departamento de Matem\'atica Aplicada\\ EUIT Industrial\\ Universidad Polit\'ecnica de Madrid\\Ronda de Valencia 3\\ 28012 Madrid Espa\~na}
\email{daniel.fox@upm.es}



\begin{abstract}
This is an account of some aspects of the geometry of K\"ahler affine metrics based on considering them as smooth metric measure spaces and applying the comparison geometry of Bakry-Emery Ricci tensors. Such techniques yield a version for K\"ahler affine metrics of Yau's Schwarz lemma for volume forms. By a theorem of Cheng and Yau there is a canonical K\"ahler affine Einstein metric on a proper convex domain, and the Schwarz lemma gives a direct proof of its uniqueness up to homothety. The potential for this metric is a function canonically associated to the cone, characterized by the property that its level sets are hyperbolic affine spheres foliating the cone. It is shown that for an $n$-dimensional cone a rescaling of the canonical potential is an $n$-normal barrier function in the sense of interior point methods for conic programming. It is explained also how to construct from the canonical potential Monge-Amp\`ere metrics of both Riemannian and Lorentzian signatures, and a mean curvature zero conical Lagrangian submanifold of the flat para-K\"ahler space.
\end{abstract}

\maketitle

\setcounter{tocdepth}{1}  

\section{Introduction}
By theorems of S.~Y. Cheng and S.~T. Yau, the geometry of a proper convex cone is encoded in a single function, called here the \textbf{canonical potential}, defined as the solution of a certain Monge-Amp\`ere equation on the cone and characterized by the property that its level sets are the hyperbolic affine spheres foliating the cone's interior. For a homogeneous cone, the canonical potential equals a scalar multiple of the logarithm of the usual characteristic function of the cone, and many of the nice properties of the characteristic function of a homogeneous cone are best understood as specializations of properties of the canonical potential of a general proper convex cone. The lift of the canonical potential to the tube domain over the cone is a potential for the unique complete K\"ahler Einstein metric on the tube, and Cheng and Yau's proofs of the existence and uniqueness of the canonical potential use their solution of the associated complex Monge-Amp\`ere equation and Yau's Schwarz lemma for volume forms on Hermitian manifolds. 

One theme of this article is that it is useful to regard a K\"ahler affine metric as a kind of smooth metric measure space. In particular, for a proper convex cone, the comparison geometry of Bakry-Emery Ricci tensors and the associated Bakry-Qian comparison theorem for the modified Laplacian can be used in place of the comparison geometry of a K\"ahler metric on the associated tube domain. As a principal example, this approach is used to prove a version for K\"ahler affine metrics of Yau's Schwarz lemma for volume forms on Hermitian manifolds. This yields a direct proof of the uniqueness of the canonical potential. It also gives a clear illustration of the philosophy behind the comparison geometry of Bakry-Emery Ricci tensors. One thinks of bounds on a Bakry-Emery Ricci tensor as corresponding to bounds on the ordinary Ricci tensor of some metric fibering over the given metric measure space. Here this idea has a concrete realization - the fiber space is the aforementioned tube domain. In this case, as in several others considered in the text, the novelty is more the point of view than the ultimate conclusion, although some of the results described do not seem to be as well known as they deserve to be.

It seems that for theoretical purposes the canonical potential of a cone is at least as useful as the usual characteristic function. As an illustration, it is proved that the canonical potential of an $n$-dimensional proper convex cone is an $n$-normal barrier for the convex cone in the sense of interior point methods for conic programming. The same result, with a universal constant times the dimension in lieu of the dimension alone, has been proved by Y. Nesterov and A. Nemirovskii for a multiple of the logarithm of the characteristic function called by them the \textit{universal barrier}. That the canonical potential has this property is a consequence of the nonpositivity of the ordinary Ricci tensor of the Hessian metric that it generates. While this can be deduced from the nonpositivity of the Ricci curvature of the equiaffine metric of its level sets proved by E. Calabi and a splitting theorem of J. Loftin, here it is proved directly as a further illustration of the utility in this context of the comparison geometry of the associated metric measure space described in the preceeding paragraph.

A secondary theme is that a number of interesting geometric structures associated to a convex cone can be constructed from its canonical potential. Beyond the affine spheres arising as the level sets of the canonical potential, there are a pair of Monge-Amp\`ere metrics with potentials equal to a function of the canonical potential, and the graph of the differential of the canonical potential is a mean curvature zero spacelike Lagrangian submanifold of the flat para-K\"ahler structure on the product of the cone with its dual.

\subsection{}
The remainder of the introduction describes the contents in detail and states the main results. This paragraph records some notational conventions used in the paper. The abstract index and summation conventions are used throughout the paper. In particular, indices serve as formal labels indicating tensor symmetries, and do not refer to any particular choice of basis unless so indicated. For example, $\delta_{i}\,^{j}$ indicates the canonical pairing between vectors and covectors, and $A_{p}\,^{p}$ indicates the trace of the endomorphism $A_{i}\,^{j}$. Enclosure of indices in square brackets (resp. parentheses) indicates complete skew-symmetrization (resp. symmetrization) over the enclosed indices, so that, for instance, the decomposition of a covariant bivector into its skew and symmetric parts is $A_{ij} = A_{[ij]} + A_{(ij)}$. An enclosed index delimited by vertical bars is to be skipped in such a (skew)-symmetrization, e.g. $2A_{[i|j|k]} = A_{ijk} - A_{kji}$.

\subsection{}\label{kahleraffinesection}
Let $M$ be a smooth manifold. Following \cite{Cheng-Yau-realmongeampere}, a pair $(\nabla, g)$ comprising a flat affine connection $\nabla$ and a pseudo-Riemannian metric $g$ on $M$ such that $\nabla_{[i}g_{j]k} = 0$ is called a \textbf{K\"ahler affine metric}. This means that every point of $M$ has an open neighborhood in which there is a \textbf{potential} function $F$ such that $g_{ij} = \nabla_{i}dF_{j}$. Everywhere in what follows, indices are raised and lowered using $g_{ij}$ and the inverse symmetric bivector $g^{ij}$. A K\"ahler affine metric will be said to be a \textbf{Hessian metric} if there is a globally defined potential. While this terminological distinction between \textit{K\"ahler affine} and \textit{Hessian} metrics is not completely standard, it seems to be useful. Background on these structures can be found in H. Shima's book \cite{Shima}. K\"ahler affine metrics were first studied as such by S.~Y. Cheng and S.~T. Yau in \cite{Cheng-Yau-realmongeampere} and H. Shima in \cite{Shima-compacthessian}. A slightly restricted special case had been studied earlier in \cite{Koszul-convexite} and \cite{Koszul-deformations}, where J.~L. Koszul proved that the universal cover of a compact flat affine manifold $(M, \nabla)$ is a divisible convex domain if and only if $M$ admits a closed one-form $\al$ such that $\nabla \al$ is a Riemannian (and so K\"ahler affine) metric (by a theorem from J. Vey's \cite{Vey}, the convex domain must be cone).

For a flat affine connection $\nabla$ and a smooth (meaning $\cinf$) function $F$, write 
\begin{align}
F_{i_{1}\dots i_{k}} = \nabla_{i_{1}}\dots\nabla_{i_{k-1}}dF_{i_{k}}.
\end{align} 
Since the tensor $g_{ij} = F_{ij}$ and its covariant derivatives are globally defined on a K\"ahler affine manifold, even though the potential $F$ need not be, for any $k \geq 0$ it makes sense to write $F_{i_{1}\dots i_{k}ij}$ for $\nabla_{i_{1}}\dots\nabla_{i_{k}}g_{ij}$, and this can be interpreted as a derivative of any local potential of $g$, or of a global potential of $g$, should such exist. In particular, the completely symmetric cubic tensor $F_{ijk} = \nabla_{i}g_{jk}$ and its contraction $F_{ip}\,^{p} = g^{pq}F_{ipq}$ will be used frequently.

Given a flat affine connection $\nabla$, there is associated to any nowhere vanishing density $\om$ of nonzero weight the closed one-form $\nabla\log \om = \om^{-1}\tensor \nabla \om$. For a K\"ahler affine structure $(\nabla, g)$, $\det g$ is a nonvanishing $2$-density, and the associated $1$-form $H_{i} = \nabla_{i} \log \det g = F_{ip}\,^{p}$ is the \textbf{Koszul form} of the K\"ahler affine structure. The \textbf{(K\"ahler affine) Ricci tensor} $\ak_{ij}$ and \textbf{(K\"ahler affine) scalar curvature} $\aks$ of a K\"ahler affine metric are defined by $\ak_{ij} = -\nabla_{i}H_{j}$ and $\aks = g^{ij}\ak_{ij}$. Note that the K\"ahler affine Ricci tensor and scalar curvature are not the same as the Ricci tensor and scalar curvature of $g$. A K\"ahler affine metric is said to be \textbf{Einstein} if $\ak_{ij}$ is a multiple of $g_{ij}$. Lemma \ref{keakslemma} shows that in this case $\aks$ is constant.

Given a fixed $\nabla$-parallel volume form $\Psi$, define a differential operator $\H(F)$ on functions by $\det \nabla dF = \H(F)\Psi^{2}$. Where $\H(F)$ is nonzero, $g_{ij} = F_{ij}$ is a pseudo-Riemannian Hessian metric and $H_{i} = d_{i}\log\H(F)$. In this case $\ak_{ij} = -\nabla_{i}d\log\H(F)_{j}$. In particular, any solution of $\H(F) = ae^{bF}$, where $a \in \reat$ and $b \in \rea$, gives a Hessian metric which is Einstein in the K\"ahler affine sense.

The automorphism group $\Aff(n+1, \rea)$ of $\nabla$ comprises the affine transformations of $\rea^{n+1}$. On $\rea^{n+1}$, $\H(F)$ is defined with respect to the standard volume form, which is always denoted $\Psi$. Let $\ell:\Aff(n+1, \rea) \to GL(n+1, \rea)$ be the projection onto the linear part, so that the kernel of the character $\det^{2}\ell:\Aff(n+1, \rea) \to \reap$ comprises the \textbf{unimodular} affine transformations preserving $\Psi$ up to sign. 
For $F \in \cinf(\rea^{n+1})$ and $g \in \Aff(n+1, \rea)$ define $(g \cdot F)(x) = F(g^{-1}x)$. Then $\H(F)$ is affinely covariant and equiaffinely invariant in the sense that  
\begin{align}\label{hgf}
g\cdot \H(F) = \det\,\!\!^{2}\ell(g) \H(g\cdot F).
\end{align}
In particular the K\"ahler affine Ricci tensor of the metric generated by $g\cdot F$ is the pullback via the action of $g$ of that generated by $F$.

A \textbf{smooth} or \textbf{Riemannian metric measure space} $(M, k, \phi)$ is a manifold $M$ equipped with a Riemannian metric $k$ and a positive function $\phi \in \cinf(M)$, which is identified with the volume form $\phi\vol_{k}$. Here the modifier \textit{smooth} will usually be omitted and these spaces will be referred to simply as \textit{metric measure spaces} or \textbf{mm-spaces}. A slightly more general structure is given by a Riemannian metric $k$ and a closed one-form $\al$. There seems to be no established terminology for a manifold $M$ equipped with such a pair $(k, \al)$; here it will be called a \textbf{local (smooth, Riemannian) mm-space}. The mm-space $(M, k, \phi)$ is a local mm-space with one-form $\al = d\log \phi$. A local mm-structure $(k, \phi)$ is \textbf{complete} if $k$ is complete. 

The \textbf{$(N+n)$-dimensional Bakry-Emery Ricci tensor} $\nric_{ij}$ associated to the $n$-dimensional local mm-space $(M, k, \al)$ is defined by 
\begin{align}\label{bedefined}
\nric_{ij} = R_{ij} - D_{i}\al_{j} - \tfrac{1}{N}\al_{i}\al_{j}, 
\end{align}
in which $R_{ij}$ is the Ricci tensor of the Levi-Civita connection $D$ of $k_{ij}$. If $\al = d\log\phi$, then $\nric_{ij} =  R_{ij} - N\phi^{-1/N}D_{i}d(\phi^{1/n})_{j}$ and the \textbf{$\infty$-(Bakry-Emery)-Ricci tensor} $\bric_{ij} = R_{ij} - D_{i}\al_{j}$ results formally when $N \to \infty$. The $\infty$-Ricci tensor was introduced by A. Lichnerowicz in \cite{Lichnerowicz-ctensor} where he used it to prove a generalization of the Cheeger-Gromoll splitting theorem.

The idea related to these tensors relevant in what follows is that a lower bound on the $(N+n)$-dimensional Bakry Emery Ricci tensor of the metric measure structure $(k, \phi)$ on the $n$-manifold $M$ corresponds to a lower bound on the usual Ricci curvature of some metric associated to $(k, \phi)$ on some $(n+N)$-dimensional manifold fibering over $M$. In the example relevant here, $N = n$ and the fibering manifold is the tube domain over an $n$-dimensional convex cone.

One aim of this paper is to explain that it is profitable to regard a Riemannian Hessian metric $g_{ij} = \nabla_{i}dF_{j}$ as the mm-space determined by $g_{ij}$ in conjunction with the volume form $\H(F)\Psi = \H(F)^{1/2}\vol_{g}$, which is the pullback via the differential $dF$ of the parallel volume form dual to $\Psi$ on the dual affine space $\rea^{n+1\,\ast}$. More generally, a K\"ahler affine structure $(g, \nabla)$ is identified with the local mm-structure formed by $g$ and the half Koszul one-form $\tfrac{1}{2}H_{i}$. 

The specialization to local mm-spaces of a distance comparison theorem of D. Bakry and Z. Qian is stated here as Theorem \ref{bqcomparisontheorem}. The proof of the Schwarz lemma for volume forms on Hermitian manifolds given by Yau and N. Mok in \cite{Mok-Yau} is adapted to the setting of K\"ahler affine metrics by substituting for the usual Riemannian distance comparison theorem the Bakry-Qian distance comparison theorem applied to the local mm-space associated to the K\"ahler affine metric. This theorem involves the modified Laplacian $\lap_{g} + \tfrac{1}{2}H^{p}D_{p}$ instead of simply $\lap_{g}$. There results Theorem \ref{schwarztheorem}, a version for K\"ahler affine metrics of Yau's Schwarz lemma for volume forms (see \cite{Yau-schwarz}, \cite{Mok-Yau}, \cite{Chen-Cheng-Lu}, and \cite{Tosatti-schwarz}).

\begin{theorem}\label{schwarztheorem}
Let $\nabla$ be a flat affine connection on the $(n+1)$-dimensional manifold $M$ and let $g_{ij}$ be a complete Riemannian metric on $M$ constituting with $\nabla$ a K\"ahler affine structure such that the K\"ahler affine Ricci curvature $\ak_{ij}$ of $g$ is bounded from below by a multiple of $g$ and the K\"ahler affine scalar curvature $\aks$ of $g$ is bounded from below by $-A(n+1)$ for some positive constant $A$. Suppose $\om$ is a $C^{2}$ volume form on $M$ such that $-\nabla_{i}\nabla_{j}\log \om$ is negative definite and satisfies $2\det \nabla \nabla\log \om \geq B\om^{2}$ for some positive constant $B$. Then $(\om/\vol_{g})^{2} \leq A^{n+1}/B$.
\end{theorem}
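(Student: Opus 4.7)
The plan is to adapt the volume-form Schwarz lemma of Mok-Yau \cite{Mok-Yau} to the K\"ahler affine setting via the substitutions suggested by the mm-space philosophy of the paper: the Riemannian Laplacian of $g$ is replaced throughout by the modified Laplacian $\square = \lap_{g} + \tfrac{1}{2}H^{p}D_{p}$ of the local mm-structure $(g, \tfrac{1}{2}H)$, and the Riemannian distance comparison used in the maximum principle step is replaced by the Bakry-Qian distance comparison (Theorem \ref{bqcomparisontheorem}). The key simplification making this work is that in $\nabla$-affine coordinates the Christoffel symbols of the Levi-Civita connection of $g$ equal $\Gamma_{ij}\,^{k} = \tfrac{1}{2}F_{ij}\,^{k}$, so $\square$ reduces, when acting on functions, to the flat operator $g^{ij}\pr_{i}\pr_{j}$.

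Set $u = (\om/\vol_{g})^{2}$ and $\psi = \log u$. Writing $\om = e^{\phi}\Psi$ and $\vol_{g} = \H(F)^{1/2}\Psi$ in local affine coordinates gives $\psi = 2\phi - \log\H(F)$, and using $\ak_{ij} = -\pr_{i}H_{j}$ one computes $\pr_{i}\pr_{j}\psi = 2\pr_{i}\pr_{j}\phi + \ak_{ij}$. Tracing against $g^{ij}$ produces $\square\psi = 2g^{ij}\pr_{i}\pr_{j}\phi + \aks$. The hypothesis that $-\nabla\nabla\log\om$ is negative definite means $M_{ij} := \pr_{i}\pr_{j}\phi$ is positive definite, and the coordinate form of $2\det\nabla\nabla\log\om \geq B\om^{2}$ reads $2\det M_{ij} \geq Be^{2\phi}$; combining with $\det g_{ij} = \H(F)$ and $e^{2\phi}/\H(F) = u$, the arithmetic-geometric mean inequality yields $2g^{ij}M_{ij} \geq 2(n+1)(\det M_{ij}/\det g_{ij})^{1/(n+1)} \geq 2(n+1)(B/2)^{1/(n+1)}u^{1/(n+1)}$. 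Together with $\aks \geq -A(n+1)$ this gives the fundamental differential inequality
\begin{align*}
\square\psi \geq 2(n+1)(B/2)^{1/(n+1)}u^{1/(n+1)} - A(n+1).
\end{align*}

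If $\psi$ attained its supremum at some point $p$, applying $\square\psi(p) \leq 0$ would at once yield $u \leq A^{n+1}/(2^{n}B) \leq A^{n+1}/B$. In the complete noncompact case this is replaced by the Omori-Yau-type argument: multiply $\psi$ by a cutoff $\eta$ built from the $g$-distance to a base point, apply $\square$ at the interior maximum of the product, then let the cutoff's radius tend to infinity. The error terms contributed by the cutoff are controlled by the Bakry-Qian bound on $\square$ applied to the distance function, which requires a lower bound on the $(N+n+1)$-dimensional Bakry-Emery Ricci tensor of $(g, \tfrac{1}{2}H)$. The natural choice $N = n+1$, matching the real dimension of the tube domain over an $(n+1)$-dimensional cone, turns the hypothesis $\ak_{ij} \geq -cg_{ij}$ into the required lower bound $\nric_{ij} \geq -c^{\prime}g_{ij}$. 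The chief technical obstacle is the clean execution of this cutoff-maximum-principle step and, in particular, verifying that the K\"ahler affine Ricci lower bound in the hypothesis does translate into the Bakry-Emery Ricci lower bound needed by Theorem \ref{bqcomparisontheorem} for the choice $N = n+1$; the rest is a routine (if careful) bookkeeping of the inequality above in the limit as the cutoff radius grows.
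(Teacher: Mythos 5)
Your proposal is correct and follows essentially the same route as the paper: the same mm-space differential inequality $\blap_{g}\log u \geq (n+1)\bigl((Bu)^{1/(n+1)} - A\bigr)$ obtained from the arithmetic--geometric mean inequality, localized by a Cheng--Yau cutoff whose error terms are controlled by the Bakry--Qian comparison with $N = n+1$. The one step you flag as needing verification --- that the lower bound on $\ak_{ij}$ yields the required lower bound on the $2(n+1)$-dimensional Bakry--Emery Ricci tensor --- does hold: it is the pointwise Cauchy--Schwarz inequality $(n+1)F_{ip}\,^{q}F_{jq}\,^{p} \geq H_{i}H_{j}$ of \eqref{nr1}, which gives $R(n+1)_{ij} \geq \tfrac{1}{2}\ak_{ij}$ as in \eqref{cd1}.
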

\noindent
For a Hessian metric $g_{ij} = \nabla_{i}dF_{j}$ with $\nabla$-parallel volume form $\Psi$, the conclusion of Theorem \ref{schwarztheorem} is stated as follows. Consider a volume form $\om$ and write $\om^{2} = V\Psi^{2}$. The hypotheses on $\om$ are equivalent to the negativity of $-\nabla_{i}d\log V_{j}$ and $\H(\log V) \geq BV$. The conclusion is that $V/\H(F) \leq A^{n+1}/B$.

\subsection{}
A \textbf{domain} means a nonempty open subset of $\rea^{n+1}$. A convex domain $\Omega \subset \rea^{n+1}$ is \textbf{proper} if its closure contains no complete affine line. A subset $\Omega \subset \rea^{n+1}$ is a \textbf{cone} if $e^{t}x \in\Omega$ whenever $x \in \Omega$. A convex cone is proper if and only if the open dual cone
\begin{align*}
\Omega^{\ast} =\{y \in \rea^{n+1\,\ast}: x^{p}y_{p} > 0 \,\,\text{for all}\,\, x \in \bar{\Omega}\setminus\{0\}\} 
\end{align*}
is not empty (for background on convex cones see chapter I of \cite{Faraut-Koranyi} or \cite{Vey}). The \textbf{automorphism group} $\Aut(\Om)$ comprises $g \in \Aff(n+1,\rea)$ preserving $\Om$.

When $\Om$ is a proper convex domain, Theorem \ref{schwarztheorem} yields
\begin{corollary}\label{gfcorollary}
Let $\Omega \subset \rea^{n+1}$ be a proper convex domain and let $F \in \cinf(\Omega)$ be a convex solution of $\H(F) = e^{2F}$ such that $g_{ij} = \nabla_{i}dF_{j}$ is a complete Riemannian metric on $\Omega$. If $G \in C^{2}(\Omega)$ is convex and satisfies $\H(G) \geq e^{2G}$ then $G\leq F$.
\end{corollary}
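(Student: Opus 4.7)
The plan is a direct application of the Hessian metric restatement of Theorem \ref{schwarztheorem} with the choice $V = e^{2G}$. First I compute the K\"ahler affine curvature quantities of $g_{ij} = \nabla_i dF_j$ from the potential equation $\H(F) = e^{2F}$. The Koszul form is $H_i = \nabla_i \log \H(F) = 2\,dF_i$, so $\ak_{ij} = -\nabla_i H_j = -2g_{ij}$ and consequently $\aks = -2(n+1)$. Thus $\ak_{ij}$ is bounded below by $-2g_{ij}$ and $\aks \geq -2(n+1)$, which, together with the assumed completeness of $g$, verifies the hypotheses on $(\nabla, g)$ of Theorem \ref{schwarztheorem} with $A = 2$.

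Next I take $V = e^{2G}$ and check the two hypotheses in the Hessian restatement. Since $G$ is convex and $\H(G) \geq e^{2G} > 0$, the tensor $\nabla dG$ is positive semi-definite and nondegenerate, hence positive definite, so $-\nabla d\log V = -2\,\nabla dG$ is negative definite. The remaining inequality
\begin{align*}
\H(\log V) = \H(2G) = 2^{n+1}\H(G) \geq 2^{n+1} e^{2G} = 2^{n+1} V
\end{align*}
shows that the constant $B = 2^{n+1}$ is admissible.

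Applying the Hessian restatement of Theorem \ref{schwarztheorem} yields $V/\H(F) \leq A^{n+1}/B = 2^{n+1}/2^{n+1} = 1$, which translates to $e^{2G} \leq e^{2F}$ pointwise on $\Omega$, equivalently $G \leq F$. All substantive analytic work is carried out in the Schwarz lemma itself, so there is no real obstacle; the only step meriting care is the deduction of the strict positive-definiteness of $\nabla dG$ from the combination of convexity and the strict positivity of $\H(G)$, which is required in order to invoke the theorem with $V = e^{2G}$.
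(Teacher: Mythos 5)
Your proposal is correct and is essentially the paper's own proof: the paper simply invokes Theorem \ref{schwarztheorem} with $\om = e^{G}\Psi$ (equivalently $V = e^{2G}$), $A = 2$, and $B = 2^{n+1}$, which is exactly your choice, and your verification of the hypotheses (including the deduction that $\nabla dG$ is positive definite from convexity plus $\H(G) > 0$) fills in the details the paper leaves implicit.
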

\begin{proof}
Theorem \ref{schwarztheorem} applies with $\om = e^{G}\Psi$, $B = 2^{n+1}$, and $A = 2$.
\end{proof}
\noindent
It follows from Corollary \ref{gfcorollary} that if there is a complete K\"ahler affine Einstein metric on $\Om$, then it is unique up to homothety, and $\Aut(\Om)$ acts on it by isometries (see \eqref{cyhom} of Theorem \ref{cytheorem}). In particular, in the case that $\Om$ is a proper convex cone, taking $g \in \Aut(\Om)$ to be a radial dilation, this implies that $F$ is logarithmically homogeneous.

 The K\"ahler affine Ricci tensor is defined in analogy with the Ricci form of a K\"ahler metric, and K\"ahler affine metrics were introduced as real analogues of K\"ahler metrics. Actually the relation is more precise, as is explained now, following \cite{Cheng-Yau-realmongeampere}. 
The \textbf{tube domain}  $\pi:\tube_{\Omega} \to \Omega$ over $\Omega \subset \rea^{n+1}$ is the subset $\tube_{\Omega} = \Omega + \j \rea^{n+1}$ of $\com^{n+1}$ fibered over $\Om$ via the projection $\pi(z^{j}) = \tfrac{1}{2}(z^{j} + \bar{z}^{j})$. The inclusion $\Omega \to \tube_{\Omega}$ is totally real. If the manifold $M$ is equipped with a flat affine connection $\nabla$, then the tube domains over affine coordinate charts patch together to give a complex manifold $\tube_{M}$ and a fibration $\pi:\tube_{M} \to M$ with totally real section $M \to \tube_{M}$. If $A \in \cinf(\Omega)$ is convex then $\om = dd^{c}\pi^{\ast}(A) = 2\j \del \delbar \pi^{\ast}(A) = \tfrac{\j}{2}A_{ij}(x)dz^{i}\wedge d\bar{z}^{j}$ is the K\"ahler form of the K\"ahler metric $G_{i\bar{j}} = A_{ij}(x)dz^{i}\tensor d\bar{z}^{j}$, and $G_{i\bar{j}}$ is complete if and only if the Hessian metric with potential $A$ is complete. In the same way a K\"ahler affine metric on $M$ lifts to a K\"ahler metric on $\tube_{M}$. The Ricci form of the K\"ahler affine metric with local potential $A$ is simply the restriction to $M$ of the Ricci form of the K\"ahler metric on $\tube_{M}$ with K\"ahler potential $\pi^{\ast}(A)$ (this accounts for the sign in the definition of $\ak_{ij}$).

Corollary \ref{gfcorollary} can also be proved by considering the K\"ahler metrics generated on $\tube_{\Omega}$ by the potentials $\pi^{\ast}(F)$ and $\pi^{\ast}(G)$ and applying Yau's Schwarz lemma for volume forms. This is explained in detail in section \ref{uniquenessexplanationsection}. The lower bound on the Ricci curvature of the K\"ahler metric on $\tube_{\Omega}$ plays a key role in the proof of Yau's Schwarz lemma, as it gives, via the usual Riemannian comparison theorems, control over volume and distance on $\tube_{\Omega}$. However, it is not clear what these conditions mean directly for the geometry of the underlying Hessian metric. This is precisely the sort of situation for which the Bakry-Emery Ricci tensors and the corresponding comparison geometry are well suited. On page $350$ of \cite{Cheng-Yau-realmongeampere} the analytic difficulties associated with the non-self-adjointness  with respect to $\vol_{g}$ of the modified Laplacian $\lap_{g} + \tfrac{1}{2}H^{p}D_{p}$ associated to the K\"ahler affine metric $(\nabla, g)$ are given as motivation for working instead on the tube domain. On the other hand, the operator $\lap_{g} + \tfrac{1}{2}H^{p}D_{p}$ is formally self-adjoint with respect to the volume $\H(F)\Psi = \H(F)^{1/2}\vol_{g}$, and so the same considerations suggest the viability of working directly with the mm-space $(g, \H(F)^{1/2})$. Via \eqref{cd1}, a lower bound on the K\"ahler affine Ricci tensor yields a lower bound on the $2(n+1)$-dimensional Bakry-Emery Ricci tensor of this mm-space, and using this bound, the corresponding distance comparison theorem for the modified Laplacian $\lap_{g} + \tfrac{1}{2}H^{p}D_{p}$ due to Bakry-Qian leads to the Schwarz lemma for volume forms of Hessian metrics, Theorem \ref{schwarztheorem}. The overall strategy of the proof is just as in the K\"ahler case, with the Bakry-Emery Ricci tensor and modified Laplacian in place of the usual ones. This provides a compelling example illustrating the utility of the formalism of metric measure spaces and Bakry-Emery Ricci tensors, in which it can be seen quite clearly how bounds on the Bakry-Emery Ricci tensor encode bounds on curvatures obtained from some manifold fibering over the one of interest. 

\subsection{}
From the affine covariance \eqref{hgf} it follows that the K\"ahler affine geometry of a Hessian metric with potential $F$ is closely related to the equiaffine geometry of the level sets of $F$. Let $I \subset \rea$ be a connected open interval and let $\psi:I \to \rea$ be a $C^{2}$ diffeomorphism. Let $\Omega_{I} = F^{-1}(I)\cap \Omega$. The level sets of $F$ and $\psi \circ F$ are the same, just differently parameterized, in the sense that for $r \in I$ there holds $\lc_{r}(F, \Omega_{I}) = \lc_{\psi(r)}(\psi \circ F, \Omega)$. For a diffeomorphism $\psi:I \to \rea$ there holds
\begin{align}\label{hpsif}
\H(\psi(F)) = \dot{\psi}^{n+1}(1 + \ac(\psi)|dF|^{2}_{g})\H(F).
\end{align}
in which $\ac(\psi) = d\log \dot{\psi} = \ddot{\psi}/\dot{\psi}$. As a consequence of \eqref{hpsif}, conditions like the K\"ahler affine Einstein condition tend also to fix the external reparameterization $\psi$. That is, such a condition imposes some coherence condition on the family of level sets of $F$. In particular, for a logarithmically homogeneous potential $F$ the K\"ahler affine Einstein equations are essentially equivalent to the statement that its level sets are affine spheres. After recalling the necessary terminology and introducing some notation, the precise statement is given below as Theorem \ref{ahtheorem}.

Let $\nabla$ be the standard flat affine connection on $\rea^{n+1}$. It preserves the standard volume form $\Psi$. A co-oriented hypersurface $\Sigma$ in flat affine space is \textbf{nondegenerate} if its second fundamental form is nondegenerate. A transverse vector field $N$ defined in a neighborhood of $\Sigma$ determines a splitting of the pullback of $T\rea^{n+1}$ over $\Sigma$ as the direct sum of $T\Sigma$ and the span of $N$. Via this splitting, the connection $\nabla$ induces on $\Sigma$ a connection $\inabla$, while via $N$, the second fundamental form is identified with a symmetric covariant two tensor $h$ on $\Sigma$. Additionally there are determined the affine shape operator $S \in \Ga(\eno(T\Sigma))$ and the connection one-form $\tau \in \Ga(T^{\ast}\Sigma)$. For vector fields $X$ and $Y$ tangent to $\Sigma$ the associated connection $\inabla$ and tensors $h$, $S$, and $\tau$ are related by
\begin{align}\label{induced}
&\nabla_{X}Y = \inabla_{X}Y + h(X, Y)N,& &\nabla_{X}N = -S(X) + \tau(X)N.
\end{align}
Here, as in what follows, notation indicating the restriction to $\Sigma$, the immersion, the pullback of $T\rea^{n+1}$, etc. is omitted. Tensors on $\Sigma$ are labeled using capital Latin abstract indices. Let $h^{IJ}$ be the bivector dual to $h_{IJ}$. The \textbf{equiaffine} normal vector field $\nm$ is determined uniquely by the requirements that it be co-oriented, that $n\tau_{I} + h^{PQ}\inabla_{I}h_{PQ} = 0$, and that the induced volume density $|i(\nm)\Psi|$ equal the volume density of the metric $h$. These conditions imply, in particular, that the equiaffine connection one-form $\tau_{I}$ vanishes identically. The corresponding $h_{IJ}$ and $S_{I}\,^{J}$ are the \textbf{equiaffine metric} and \textbf{equiaffine shape operator}.  
The \textbf{affine mean curvature} is the arithmetic mean $\amc = \tfrac{1}{n}S_{I}\,^{I}$ of the eigenvalues of the equiaffine shape operator. The distinguished affinely invariant line field on $\Sigma$ spanned by $\nm$ is called the \textbf{affine normal distribution}.

A co-orientable locally uniformly convex hypersurface $\Sigma \subset \rea^{n+1}$ is co-oriented so that a co-oriented transverse vector field points to the \textbf{convex} (or \textbf{interior}) side of $\Sigma$, namely that side in the direction of which a parallel translate of a supporting hyperplane intersects the hypersurface. In this case the equiaffine metric is Riemannian and $\Sigma$ is said to be \textbf{complete} if the equiaffine metric is complete.

A nondegenerate hypersurface $\Sigma$ is a \textbf{proper affine sphere} if its equiaffine normals meet in a point, its \textbf{center}, and an \textbf{improper affine sphere}, if they are parallel, in which case $\Sigma$ is said to have \textbf{center at infinity}. An equivalent definition is that the affine shape operator is a multiple of the identity. It follows from the Gauss-Codazzi equations that in this case $\amc$ is constant. A locally uniformly convex affine sphere is \textbf{hyperbolic}, \textbf{parabolic}, or \textbf{elliptic}, according to whether its affine mean curvature is negative, zero, or positive. Clearly $\Sigma$ is a parabolic affine sphere if and only if its equiaffine normals are parallel, while $\Sigma$ is an elliptic or hyperbolic affine sphere if and only if it is proper and its center is in its interior or exterior, respectively. 

A function $F$ is \textbf{$\al$-logarithmically homogeneous} on the open subset $\Om \in \rea^{n+1}$ if $F(e^{t}x) = F(x) + \al t$ for all $t \in \rea$ and $x \in \Om$ such that $e^{t}x \in \Om$. The space of $\cinf$ smooth $\al$-logarithmically homogeneous functions on $\Om$ is written $\lmg_{\al}(\Om)$. For an open subset $\Om \subset \rea^{n+1}$ and $F \in \cinf(\Omega)$, let $\lc_{r}(F, \Omega) = \{x \in \Omega: F(x) = r\}$. 

\begin{theorem}\label{ahtheorem}
Let $\al < 0$. Let $\Omega \subset \rea^{n+1}$ and $I \subset \rea$ be nonempty, connected open subsets. For $F \in \lmg_{\al}(\Omega)$, let $\Omega_{I} = F^{-1}(I)\cap \Omega$ and suppose $g_{ij} = \nabla_{i}dF_{j}$ is positive definite on $\Omega_{I}$. The following are equivalent:
\begin{enumerate}
\item \label{aht1} For all $r \in I$ each connected component of $\lc_{r}(F, \Omega_{I})$ is a hyperbolic affine sphere with center at the origin.
\item \label{aht2} There is a nonvanishing function $\phi:I \to \rea$ such that $F$ solves $\H(F) = \phi(F)$ on $\Omega_{I}$.
\end{enumerate}
In this case there is a constant $B\neq 0$ such that $\phi(r) = Be^{-2(n+1)r/\al}$ and the affine mean curvature of $\lc_{r}(F, \Omega_{I})$ is 
\begin{align}\label{ahmc}
\begin{split}
&-|\al|^{-(n+1)/(n+2)}|B|^{1/(n+2)}e^{-2(n+1)r/\al(n+2)}.
\end{split}
\end{align} 
\end{theorem}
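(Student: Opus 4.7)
The plan is to read off the equiaffine geometry of each level set $\lc_r = F^{-1}(r) \cap \Om_I$ directly from $F$, exploiting that the Euler field $\eul = x^p\partial_p$ serves as a natural transverse vector field. Differentiating $F(e^{t}x) = F(x) + \al t$ in $t$ at $t = 0$ gives $\eul^p F_p = \al$; a further derivative in $x$ yields $\eul^p g_{pi} = -F_i$, and contracting with $\eul^i$ gives $g(\eul, \eul) = -\al > 0$. Since $g(\eul, X) = -X(F) = 0$ for every $X \in T_x\lc_r$, the Euler field is $g$-orthogonal to and therefore transverse to every level set, and the $g$-gradient of $F$ is $-\eul$.

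Using $\eul$ as the transverse vector field in the decomposition \eqref{induced}, the flatness identity $\nabla_X\eul = X$ gives $S_\eul = -\Id$ and $\tau_\eul = 0$; differentiating $Y(F)|_{\lc_r} = 0$ yields $(\nabla_X Y)(F) = -g(X,Y)$ for $X, Y$ tangent to $\lc_r$, which combined with $\eul(F) = \al$ forces $h_\eul(X, Y) = g(X,Y)/|\al|$ on $T\lc_r \times T\lc_r$. Moreover, the $g$-orthogonal splitting along $(\eul, e_1, \dots, e_n)$ with $e_a \in T\lc_r$ block-diagonalizes $g_{ij}$, so the tangent-tangent block of the inverse $g^{ij}$ coincides with the inverse of the restricted metric $g|_{T\lc_r}$.

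The level set $\lc_r$ is a proper affine sphere with center at the origin if and only if its equiaffine normal $\nm$ is everywhere a positive constant multiple of $\eul$. Searching for $\nm$ in the form $\nm = \mu\eul$ gives $S_\nm = -\mu\Id$, $\tau_\nm = d\log\mu$, and $h_\nm = g|_{T\lc_r}/(\mu|\al|)$. A direct computation shows that with $\nm = \mu\eul$ the apolarity invariant $n\tau_I + h_\nm^{PQ}\inabla_I h_{\nm, PQ}$ reduces, independently of $\mu$, to $h_\eul^{PQ}\inabla_I h_{\eul, PQ}$, which by the block decomposition of the second paragraph equals the restriction of the Koszul form $H_I = \partial_I \log \H(F)$ to $T\lc_r$. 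Hence apolarity holds precisely when $\H(F)$ is constant on $\lc_r$. The volume matching condition $|i(\nm)\Psi| = \vol_{h_\nm}$, computed from the adapted identity $\H(F)(i(\eul)\Psi)^2 = |\al|\vol_{g|_{T\lc_r}}^2$ together with $\vol_{h_\nm} = (\mu|\al|)^{-n/2}\vol_{g|_{T\lc_r}}$ and $|i(\nm)\Psi| = \mu|i(\eul)\Psi|$, then fixes $\mu = |\al|^{-(n+1)/(n+2)}\H(F)^{1/(n+2)}$, positive by positive definiteness of $g$, and $\amc = -\mu$.

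Both conditions required for the equiaffine normal to be of the form $\mu\eul$ with $\mu$ a positive constant on $\lc_r$ thus reduce to $\H(F)$ being constant on $\lc_r$, establishing the equivalence of (\ref{aht1}) and (\ref{aht2}) with $\phi$ nonvanishing (in fact positive) by $\H(F) > 0$. To determine $\phi$ explicitly I would apply the affine covariance \eqref{hgf} to the radial dilation $g_t(x) = e^{-t}x$: log-homogeneity gives $g_t \cdot F = F + \al t$, and $\H$ is invariant under adding a constant to its argument, so $\H(F)(e^{t}x) = e^{-2(n+1)t}\H(F)(x)$. Writing this as $\phi(s + \al t) = e^{-2(n+1)t}\phi(s)$ for $s = F(x)$ yields the solution $\phi(r) = Be^{-2(n+1)r/\al}$ for a nonzero constant $B$, and substituting into $\amc = -\mu$ gives \eqref{ahmc}. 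The main technical obstacle will be the careful bookkeeping, particularly the reduction of the apolarity invariant to $H|_{T\lc_r}$ via the block-diagonal $g$-orthogonal decomposition, which is what produces the precise exponent $(n+1)/(n+2)$ in $\mu$.
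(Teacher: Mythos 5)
Your proof is correct and arrives at the same formulas, but it is organized around a genuinely different derivation of the key geometric fact. The paper imports the explicit formula \eqref{affnorm} for the equiaffine normal of an arbitrary nondegenerate level set (recalled from Hao--Shima and \cite{Fox-autoiso}), specializes it via the homogeneity identities \eqref{hompol} to get \eqref{nm2}, and then reads off that the normal is radial exactly when $H_{i}$ is proportional to $F_{i}$, i.e.\ exactly when $\H(F)$ is constant on the level set; the computation of $\phi$ from the functional equation $\phi(r+\al t)=e^{-2(n+1)t}\phi(r)$ is the same in both arguments. You instead bypass \eqref{affnorm} entirely: you test the radial ansatz $\nm=\mu\eul$ against the two defining conditions of the equiaffine normal, show that the apolarity defect $n\tau_{I}+h^{PQ}\inabla_{I}h_{PQ}$ of this ansatz is independent of $\mu$ and equals the tangential restriction of $H_{i}=d_{i}\log\H(F)$, and let the volume normalization fix $\mu=|\al|^{-(n+1)/(n+2)}\H(F)^{1/(n+2)}$, whence $\amc=-\mu$. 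I checked the two steps you flag as delicate: the $\mu$-independence of the apolarity defect (the $n\,d\log\mu$ from $\tau$ cancels the $-n\,d\log\mu$ from rescaling $h$, with $\inabla$ unchanged under rescaling of the transversal) and the identification of $h_{\eul}^{PQ}\inabla_{I}h_{\eul,PQ}$ with $H_{I}$ restricted to the level set (the radial--radial block of $g^{pq}\nabla_{i}g_{pq}$ contributes a multiple of $F_{i}$, which dies on tangent vectors); both are sound. Your route is self-contained and arguably more illuminating about why the Koszul form governs the affine normal direction, at the price of working only for logarithmically homogeneous $F$, whereas the paper's route is a short corollary of general level-set formulas it needs elsewhere. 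Two small points of care, both shared with the paper rather than specific to you: passing from ``$\H(F)$ constant on each connected component of $\lc_{r}(F,\Om_{I})$'' to the existence of a single function $\phi$ of the value $r$ is implicit, and in the final step no regularity of $\phi$ is assumed a priori, so the explicit form $Be^{-2(n+1)r/\al}$ should be extracted directly from the functional equation, as you in fact do.
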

\noindent
Note that no regularity assumptions are made on the function $\phi$ of \eqref{aht2}, although such conditions follow automatically from the logarithmic homogeneity of $F$. Theorem \ref{ahtheorem} is proved at the end of section \ref{affinesection}. With minor modifications, Theorem \ref{ahtheorem} is true in arbitrary signatures (see \cite{Fox-autoiso}). (Such a modification is necessary even to include the case of elliptic affine spheres, as these are the level sets of a potential for a Lorentzian Hessian metric.)

If $F$ solves $\H(F) = \phi(F)$ then $t\cdot F(x) = F(e^{-t}x)$ solves $\H(t\cdot F) = e^{-2(n+1)t}t\cdot \H(F)$, so $B$ can be taken to be $\pm 1$ by replacing $F$ by $t\cdot F(x)$ with $2(n+1)t = \log|B|$. Put in another manner, the value of the level has no intrinsic meaning, although the difference or ratio of the values of two levels has.

\subsection{}
In fact, the classification of complete hyperbolic affine spheres can be founded on the study of the equation $\H(F) = e^{2F}$. While this must have been understood in some form to Cheng and Yau, it does not seem well known, except for accounts by J. Loftin in \cite{Loftin-affinekahler} and \cite{Loftin-survey}, and so is described here in some detail. 

The locally uniformly convex affine spheres are constructed in full generality due to work of Cheng and Yau in \cite{Cheng-Yau-mongeampere}, \cite{Cheng-Yau-realmongeampere}, and \cite{Cheng-Yau-affinehyperspheresI}. Precisely, they show that there is a unique foliation of the interior of a proper open convex domain $\Omega \subset \rea^{n+1}$ by affine complete properly embedded hyperbolic affine spheres having center at the vertex of $\Om$ and asymptotic to its boundary. A precise statement containing some additional information is given as Theorem \ref{affinespheretheorem} below. Results of this sort were conjectured quite precisely by E. Calabi in \cite{Calabi-completeaffine} and \cite{Calabi-nonhomogeneouseinstein}. The underlying ideas originate with C. Loewner and L. Nirenberg's \cite{Loewner-Nirenberg}. Cheng and Yau attribute part of their theorem to independent unpublished work of Calabi and Nirenberg, and presumably for this reason did not publish all the details of the argument in a single place, so it is useful also to consult \cite{Sasaki}, \cite{Loftin-survey},  \cite{Gigena-integralinvariants}, \cite{Gigena}, \cite{Li-1}, \cite{Li-2}, and \cite{Trudinger-Wang-survey}. 

A complete proof of the Cheng-Yau Theorem has three basic parts: the existence of the affine spheres, obtained by solving some Monge-Amp\`ere equation; the uniqueness, obtained by some sort of Schwarz lemma; and the extrinsic claims regarding completeness and the asymptotic properties. These last claims and the relation between affine and Euclidean completeness will not be discussed here, as they are now understood in a more general context due to the work of N. Trudinger and X.-J. Wang; see \cite{Trudinger-Wang-survey} and \cite{Trudinger-Wang-affinecomplete}. As is recalled briefly in section \ref{slicesection}, the existence part of the theorem is usually proved by appealing to a theorem of Cheng-Yau which produces a negative convex solution to the Dirichlet problem for the equation $u^{n+2}\H(u) = (-1)^{n}$ on the set of rays $\projp(\Om)$ in $\Om$. In section \ref{cysection} below there is described how the existence can be based on the following theorem of Cheng and Yau (resolving a conjecture made by Calabi on page $19$ of \cite{Calabi-nonhomogeneouseinstein}).
\begin{theorem}[S.Y.Cheng and S.T.Yau, \cite{Cheng-Yau-completekahler}, \cite{Cheng-Yau-realmongeampere}, \cite{Cheng-Yau-affinehyperspheresI}]\label{cytheorem}
On a proper open convex domain $\Omega \subset \rea^{n+1}$ there exists a unique smooth convex function $F:\Omega \to \rea$ solving $\H(F) = e^{2F}$, tending to $+\infty$ on the boundary $\pr \Omega$ of $\Omega$, and such that $g_{ij} = \nabla_{i}dF_{j}$ is a complete Riemannian metric on $\Omega$ forming with the standard flat affine connection $\nabla$ a K\"ahler affine Einstein metric with K\"ahler affine scalar curvature $-2$. Moreover, this $F$ has the following properties:
\begin{enumerate}
\item \label{cyhom} $F(gx) = F(x) - \log\det \ell(g)$ for all $g \in \Aut(\Omega)$. This implies $g_{ij}$ is $\Aut(\Omega)$ invariant. 
\item \label{cysup} For all $x \in \Om$,
\begin{align} 
F(x) = \sup\{G(x): G \in C^{2}(\Omega), G \,\, \text{convex in} \,\,\Omega, \,\,\text{and}\,\, \H(G) \geq e^{2G}\}
\end{align}
\end{enumerate}
\end{theorem}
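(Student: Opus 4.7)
The plan is to derive uniqueness, the equivariance \eqref{cyhom}, and the supremum characterization \eqref{cysup} immediately from Corollary \ref{gfcorollary}, and to obtain existence by the slice construction to be developed in section \ref{cysection}.

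For uniqueness, suppose $F_{1}$ and $F_{2}$ both satisfy the stated conditions. Each is convex, satisfies $\H(F_{i}) = e^{2F_{i}}$, and has a complete induced Hessian metric. Corollary \ref{gfcorollary} applied with $(F, G) = (F_{1}, F_{2})$ yields $F_{2} \leq F_{1}$, and interchanging the roles yields the opposite inequality; hence $F_{1} = F_{2}$. The same argument gives \eqref{cyhom}: for $g \in \Aut(\Om)$, the affine covariance \eqref{hgf} shows that
\begin{align*}
\tilde{F}(x) = F(g^{-1}x) - \log\det\ell(g)
\end{align*}
is convex, blows up on $\pr\Om$, has a complete Hessian metric (the $g$-pullback of that of $F$), and satisfies $\H(\tilde{F}) = e^{2\tilde{F}}$; uniqueness forces $\tilde{F} = F$, which rearranges to $F(gy) = F(y) - \log\det\ell(g)$. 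Statement \eqref{cysup} is a direct restatement of Corollary \ref{gfcorollary} together with the observation that $F$ itself is an admissible competitor in the supremum.

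The main obstacle is existence. My plan is to invoke the slice construction to be carried out in section \ref{cysection}. Starting from the Cheng-Yau solution of the Dirichlet problem for $u^{n+2}\H(u) = (-1)^{n}$ on $\projp(\Om)$ recalled in section \ref{slicesection}, that construction produces the hyperbolic affine spheres foliating $\Om$, and the canonical potential $F$ is then manufactured as the unique $-(n+1)$-logarithmically homogeneous function on $\Om$ whose level sets are precisely these affine spheres, with the homogeneity exponent and the additive constant pinned down by the required scalar curvature normalization. By Theorem \ref{ahtheorem} such an $F$ automatically satisfies $\H(F) = e^{2F}$; the boundary blow-up $F \to +\infty$ on $\pr \Om$ reflects the extrinsic approach to $\pr \Om$ of the affine spheres, and the completeness of $g_{ij} = \nabla_{i}dF_{j}$ is the subtlest ingredient, following from the affine-completeness of the spheres together with the Trudinger-Wang analysis of the relationship between affine and Euclidean completeness alluded to in the text. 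Once existence with these properties is secured, every remaining assertion of the theorem follows from the first two paragraphs.
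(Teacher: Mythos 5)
Your treatment of uniqueness, of \eqref{cyhom}, and of \eqref{cysup} is correct and is essentially identical to the paper's: both derive all three from Corollary \ref{gfcorollary}, with \eqref{cyhom} obtained by feeding the affine covariance \eqref{hgf} into the uniqueness statement exactly as you do.

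The gap is in your existence argument. Theorem \ref{cytheorem} is stated for an arbitrary proper open convex \emph{domain} $\Om \subset \rea^{n+1}$, not only for a cone, and the paper genuinely needs the non-conical case (for instance it later discusses the canonical potential of bounded polyhedral regions, where $F$ has an interior minimum). Your proposed construction --- solve $u^{n+2}\H(u) = (-1)^{n}$ on $\projp(\Om)$, foliate $\Om$ by the resulting hyperbolic affine spheres, and declare $F$ to be the $-(n+1)$-logarithmically homogeneous function constant on the leaves --- makes sense only when $\Om$ is a cone: without a vertex there is no radial dilation, no logarithmic homogeneity, and no affine-sphere foliation of the required type (indeed for a bounded domain the level sets of the canonical potential are compact and are not affine spheres). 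The paper instead obtains existence by citation: Corollary $7.6$ of \cite{Cheng-Yau-completekahler} for bounded domains and the exhaustion argument of section $4$ of \cite{Cheng-Yau-realmongeampere} for unbounded ones (with \cite{Guan-Jian} as a self-contained alternative). Note also that the paper's logical architecture runs in the opposite direction from yours: Theorem \ref{affinespheretheorem} \emph{derives} the affine-sphere foliation of a cone from the canonical potential, rather than building the potential out of a pre-existing foliation. Your route is the classical one for cones (it is essentially Loftin's in \cite{Loftin-affinekahler}, and the completeness of $g$ would indeed follow from the product decomposition \eqref{loftinlemma}), but as written it does not prove the theorem as stated; you would need either to restrict the statement to cones or to supply a separate existence argument for general proper convex domains.
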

\noindent
Given a proper open convex domain $\Omega$, the unique solution $F$ of $\H(F) = e^{2F}$ such that $g_{ij} = \nabla_{i}dF_{j}$ is a complete metric on $\Omega$ given by Theorem \ref{cytheorem} will be called the \textbf{canonical potential} of $\Omega$, and the Hessian metric $g_{ij}$ will be called the \textbf{canonical metric} of $\Om$. Sometimes there will be written $F_{\Om}$ to indicate the dependence of $F$ on $\Om$.

\begin{proof}[Proof of Theorem \ref{cytheorem}]
The solvability of the equation on a bounded convex domain follows from Corollary $7.6$ of \cite{Cheng-Yau-completekahler}. In section $4$ of \cite{Cheng-Yau-realmongeampere}, Cheng and Yau resolved the unbounded case, constructing the solution as a limit of solutions on bounded convex domains exhausting the domain $\Omega$. A self-contained proof of the existence is given also in \cite{Guan-Jian}. The uniqueness of the solution follows from Corollary \ref{gfcorollary}. Although the uniqueness in the unbounded case was not stated in \cite{Cheng-Yau-realmongeampere}, it was surely known to the authors, as it follows from Yau's generalized Schwarz lemma for volume forms on Hermitian manifolds, in the form stated in \cite{Mok-Yau} or \cite{Chen-Cheng-Lu}, applied on the tube domain over $\Omega$, as is explained in section \ref{uniquenessexplanationsection}.

Let $g \in\Aff(n+1, \rea)$ and let $F$ be the canonical potential of $\Om$. Since $\nabla d(g\cdot F - \log |\det \ell(g)|) = \nabla d(g\cdot F) = L_{g^{-1}}^{\ast}(\nabla dF)$, where $L_{g}$ denotes the action of $g$ by left multiplication action, the Hessian metric determined by $g\cdot F - \log |\det \ell(g)|$ is complete since the Hessian metric determined by $F$ is. By \eqref{hgf}, if $g \in\Aff(n+1, \rea)$, then $\H(g \cdot F - \log |\det \ell(g)|) = e^{2(g \cdot F - \log |\det \ell(g)|)}$. It follows from the uniqueness of the canonical potential that  $g\cdot F - \log |\det \ell(g)|$ is the canonical potential of $g\Om$. In particular, if $g \in \Aut(\Om)$ it must be $g\cdot F - \log |\det \ell(g)| = F$. This proves \eqref{cyhom}.

The characterization \eqref{cysup} of $F(x)$ as the supremum of $G(x)$ taken over all $C^{2}$ convex functions $G$ satisfying $\H(G) \geq 2G$ is immediate from Corollary \ref{gfcorollary}. 
\end{proof}

\subsection{}
The canonical potential $F$ of the proper convex cone $\Om$ is a counterpart to the usual (Koecher-Koszul-Vey) \textbf{characteristic function} $\phi_{\Omega}$ of $\Omega$, which is the positive homogeneity $-n-1$ positive function defined by 
\begin{align}\label{usualchardefined}
\phi_{\Omega}(x)  = \int_{\Omega^{\ast}}e^{-x^{p}y_{p}}dy
= n!\int_{S\cap \Omega^{\ast}} (x^{p}v_{p})^{-n-1}\,d\si(v) = n!\, \vol(\{y \in \Omega^{\ast}: x^{p}y_{p} = 1\}),
\end{align}
where $S$ is the Euclidean unit sphere in $\rea^{n+1\,^{\ast}}$, and $d\si$ is the induced volume on $S$. For the convergence of the integral defining $\phi_{\Omega}$ and the basic properties of $\phi_{\Omega}$ see \cite{Vinberg}, \cite{Faraut-Koranyi}, or \cite{Vey}. Relevant here are that $\phi_{\Omega}(x) \to +\infty$ uniformly as $x$ tends to the boundary of $\Omega$, that $\nabla d \phi_{\Omega}$ and $\nabla d \log \phi_{\Omega}$ are positive definite, and that $g\cdot \phi_{\Omega} = |\det \ell(g)|\phi_{g\cdot \Omega}$ for any $g \in \Aff(n+1, \rea)$. 

As is explained in section \ref{homogeneoussection}, for a homogeneous convex cone $\Om$ the function $\log \phi_{\Om}$ solves $\H(G) = ce^{2G}$ for some positive constant $c$. It follows that in this case, as a consequence of the uniqueness of the canonical potential, $\phi_{\Om}$ equals a constant multiple of $e^{F_{\Om}}$. In section \ref{characteristicfunctionsection} it is shown that most of the nice properties of the usual characteristic function for a homogeneous cone are valid on any proper convex cone for $e^{F_{\Om}}$, and so should be understood as consequences of the identification of $\phi_{\Om}$ with a multiple of $e^{F_{\Om}}$ in the homogeneous case.

\subsection{}
Theorem \ref{parakahlertheorem} of section \ref{parakahlersection} shows that for the canonical potential $F$ of a proper open convex cone $\Om$ the graph of minus the differential of the positive homogeneity $2$ function $u = -((n+1)/2)e^{-2F/(n+1)}$ is a mean curvature zero nondegenerate conical Lagrangian submanifold of the canonical flat para-K\"ahler structure on $\Om \times \Om^{\ast}$. Closely related constructions have been made by the author in \cite{Fox-ahs} and \cite{Fox-2dahs} and by R. Hildebrand in \cite{Hildebrand-parakahler} and \cite{Hildebrand-crossratio}. By \eqref{hpsif}, $u$ satisfies $\H(u) = -1$, and so the ordinary graph of $u$ is an improper affine sphere. Although the Hessian of $u$ has Lorentzian signature, this result is suggestive in light of M. Warren's result in \cite{Warren} showing that the graph of the differential of a $C^{2}$ convex function over a bounded, simply connected domain with $C^{1}$ boundary is, in the metric induced by the ambient flat para-K\"ahler metric, the unique volume maximizer among spacelike, oriented submanifolds in its homology class if the graph of the function itself is an open subset of an improper affine sphere. Other interesting properties of the function $u$ are given in Theorem \ref{lmatheorem} below.

\subsection{}
The following notions were introduced by Y. Nesterov and A. Nemirovskii, \cite{Nesterov-Nemirovskii}, (see also chapter $4$ of \cite{Nesterov} and the survey \cite{Nemirovski-Todd}) in the context of interior-point methods for the resolution of convex programming problems. For $\al > 0$, a function $F$ on an open convex set $\Om \subset \rea^{n+1}$ is \textbf{$\al$-self-concordant} if $F$ is at least three times differentiable and convex on $\Om$ and there hold
\begin{enumerate}
\item $F$ is a \textbf{barrier} for $\Om$ in the sense that $F(x_{i}) \to \infty$ for every sequence $\{x_{i}\} \in \Om$ converging to a point of the boundary $\pr \Om$.
\item 
\begin{align}\label{sc1}
\al (v^{i}v^{j}v^{k}F_{ijk}(x))^{2} \leq 4(v^{i}v^{j}F_{ij}(x))^{3} = 4|v|_{g}^{6},
\end{align}
for all $v \in \rea^{n+1}$ and $x \in \Om$, where $g_{ij} = \nabla_{i}dF_{j}$. 
\end{enumerate}
The notion of self-concordance is affinely invariant in the sense that if $F$ is self-concordant on $\Om$ then $g \cdot F$ is self-concordant on $g \Om$ for $g \in \Aff(n+1, \rea)$. By Corollary $2.1.1$ of \cite{Nesterov-Nemirovskii}, a self-concordant function is nondegenerate if its Hessian is nondegenerate at a single point. Note that if $F$ is $\al$-self-concordant then $\al^{-1} F$ is $1$-self-concordant. 
A $1$-self-concordant function for which there is a constant $\nu \geq 1$ such that for all $x \in \Om$ and $v \in \rea^{n+1}$ there holds
\begin{align}\label{sc2}
(v^{i}F_{i}(x))^{2} \leq \nu F_{ij}(x)v^{i}v^{j} = \nu |v|_{g}^{2},
\end{align}
is called a \textbf{self-concordant barrier with parameter $\nu$} for (the closure of) $\Om$. By the Schwarz inequality, the condition \eqref{sc2} is automatic if $F$ is $(-\nu)$-logarithmically homogeneous. A $(-\nu)$-logarithmically homogeneous $1$-self-concordant function for $\Om$ is called a \textbf{$\nu$-normal barrier} for $\Om$. 

\begin{theorem}\label{sctheorem}
The canonical potential $F$ of the proper open convex cone $\Om \in \rea^{n+1}$, is an $(n+1)$-normal barrier function for $\Om$.
\end{theorem}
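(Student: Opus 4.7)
The plan has three pieces: verify the normal–barrier structure (the $(-(n+1))$-logarithmic homogeneity, the barrier condition, and the immediate consequence \eqref{sc2}); reduce the $1$-self-concordance condition \eqref{sc1} to the pointwise bound $R^{D} \leq 0$ on the ordinary Ricci tensor of the canonical metric via a Bochner-type identity; and then establish that Ricci bound. All but the last piece are essentially algebraic.

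First, because $\Omega$ is a cone, the scaling $g_{t} = e^{t}\Id$ lies in $\Aut(\Omega)$ with $\det \ell(g_{t}) = e^{(n+1)t}$, so Theorem \ref{cytheorem}\eqref{cyhom} gives $F(e^{t}x) = F(x) - (n+1)t$, i.e.\ $F$ is $(-(n+1))$-logarithmically homogeneous. Differentiating once and twice in $t$ at $t = 0$ yields the identities $x^{i}F_{i} = -(n+1)$, $g_{ij}x^{j} = -F_{i}$, and $F_{ijk}x^{k} = -2g_{ij}$; in particular $F^{i} = -x^{i}$ and $|dF|_{g}^{2} = -x^{i}F_{i} = n+1$. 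Cauchy-Schwarz then gives $(v^{i}F_{i})^{2} \leq (n+1)|v|_{g}^{2}$, which is the bound \eqref{sc2} with $\nu = n+1$. The barrier condition $F(x) \to +\infty$ as $x \to \partial\Omega$ is part of Theorem \ref{cytheorem}.

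Second, I would derive a Bochner-type identity specialized to the canonical potential. Writing the Levi-Civita connection of $g$ as $D = \nabla + A$ with $A_{ij}{}^{k} = \tfrac{1}{2}g^{kl}F_{ijl}$ and using flatness of $\nabla$, a direct computation yields the standard Ricci formula for a Hessian metric,
$$R^{D}_{jl} = \tfrac{1}{2}g^{pq}F_{pqjl} - \tfrac{1}{4}H^{p}F_{jlp} + \tfrac{1}{2}\ak_{jl} - \tfrac{1}{4}Q_{jl}, \qquad Q_{jl} := F_{jpq}F_{l}{}^{pq}.$$
Substituting the K\"ahler affine Einstein relation $\ak_{ij} = -2g_{ij}$ (which encodes $\H(F) = e^{2F}$, giving $H_{i} = 2F_{i}$), the identity $g^{pq}F_{pqjl} = 2g_{jl} + Q_{jl}$ (obtained by differentiating $g^{pq}F_{pqk} = H_{k} = 2F_{k}$ in $\nabla_{l}$), and the cone identity $F^{p}F_{jlp} = -x^{p}F_{jlp} = 2g_{jl}$, the four terms collapse into
$$R^{D}_{jl} = \tfrac{1}{4}Q_{jl} - g_{jl},$$
so that $R^{D} \leq 0$ pointwise as a symmetric form is equivalent to the tensor inequality $Q \leq 4g$. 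Now Cauchy-Schwarz reduces \eqref{sc1} to exactly $Q \leq 4g$: with $u_{i} := F_{ijk}v^{j}v^{k}$ and $M_{ij}(v) := F_{ijk}v^{k}$ one has $u_{i} = M_{ij}(v)v^{j}$, and
$$(v^{i}v^{j}v^{k}F_{ijk})^{2} = (v^{i}u_{i})^{2} \leq |v|_{g}^{2}\,|u|_{g}^{2} \leq |v|_{g}^{4}\,\|M(v)\|_{\operatorname{op},g}^{2} \leq |v|_{g}^{4}\,\|M(v)\|^{2}_{\operatorname{Frob},g} = Q(v,v)\,|v|_{g}^{4},$$
whence $Q \leq 4g$ implies $(v^{i}v^{j}v^{k}F_{ijk})^{2} \leq 4|v|_{g}^{6}$, which is \eqref{sc1} with $\alpha = 1$. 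Combined with the preceding paragraph, this makes $F$ an $(n+1)$-normal barrier.

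The main obstacle is thus the final piece: the pointwise nonpositivity $R^{D} \leq 0$ of the ordinary Ricci tensor of the canonical metric. Following the paper's program, I would prove this directly via the Bakry-Emery comparison geometry of the local mm-space $(g, \tfrac{1}{2}H)$ canonically associated with the K\"ahler affine structure. Concretely, the K\"ahler affine Einstein condition $\ak = -2g$ translates into a lower bound on the $2(n+1)$-dimensional Bakry-Emery Ricci tensor of $(g, \tfrac{1}{2}H)$, and one then runs a Bochner and maximum-principle argument using the Bakry-Qian distance comparison for the modified Laplacian $\Delta_{g} + \tfrac{1}{2}H^{p}D_{p}$ — in the same spirit as the proof of the Schwarz lemma, Theorem \ref{schwarztheorem} — applied to a suitable scalar function majorising the positive part of $Q - 4g$, to preclude points where $Q > 4g$. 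Producing the full tensor-level inequality rather than merely a trace or integral bound from this comparison-geometric machinery is the technical heart of the argument. As noted in the introduction, an alternative route is to deduce $R^{D} \leq 0$ from Calabi's nonpositivity of the equiaffine Ricci of the hyperbolic affine sphere level sets of $F$ together with Loftin's splitting theorem relating $g$ to the equiaffine metric on those leaves.
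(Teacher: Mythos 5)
Your reduction is exactly the paper's proof of Theorem \ref{sctheorem}: the logarithmic homogeneity from \eqref{cyhom} gives $(v^{i}F_{i})^{2} = \langle v, -\eul\rangle_{g}^{2} \leq (n+1)|v|_{g}^{2}$, and Cauchy--Schwarz turns the pointwise bound $F_{ip}{}^{q}F_{jq}{}^{p} \leq 4g_{ij}$ --- equivalently, by \eqref{hessiancurvature} together with $H_{p} = 2F_{p}$ and $\eul^{p}F_{pij} = -2g_{ij}$, the nonpositivity of $R_{ij}$ --- into the $1$-self-concordance inequality \eqref{sc1}. Your intermediate Bochner-type formula is consistent with \eqref{hessiancurvature}, and the identity $R_{ij} = \tfrac{1}{4}Q_{ij} - g_{ij}$ is the right one.

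The gap is the last piece. The nonpositivity $R_{ij} \leq 0$ is the paper's Theorem \ref{riccicurvaturetheorem}, and your proposal for it is a one-sentence program (``a Bochner and maximum-principle argument \dots applied to a suitable scalar function majorising the positive part of $Q - 4g$'') that does not yet constitute a proof and understates what is required. To get the sharp tensor bound $Q \leq 4g$ --- and the constant $4$ is exactly what \eqref{sc1} needs with $\al = 1$ --- the paper does not work with $F_{ijk}$ or $Q_{ij}$ directly: it passes to the trace-free horizontal cubic form $A_{ijk}$ of \eqref{aijkdefined2}, which satisfies $D_{[i}A_{j]kl} = 0$ and $D^{p}A_{ijp} = 0$; it takes as the scalar the operator norm $\phi(x) = \sup\{(u^{i}u^{j}A_{ip}{}^{q}A_{jq}{}^{p})^{1/2} : |u|_{g} = 1\}$ rather than a trace; it establishes $\blap_{g}\phi \geq \tfrac{n+1}{4(n-1)}\phi^{3} - \phi$ only in the barrier sense, using the sharp algebraic estimates \eqref{dfc4} and \eqref{dfc6}, which exploit that $A$ annihilates $\eul$ and the chosen direction $v$ and so effectively lives in dimension $n-1$; and it closes the argument not with Theorem \ref{cyestimatetheorem} (which is not proved for barrier-sense inequalities) but with an explicit Calabi-type ODE comparison against the Bakry--Qian bound of Theorem \ref{bqhessiantheorem} and Calabi's generalized maximum principle. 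A naive Bochner estimate on $|F_{ijk}|_{g}^{2}$ or on the operator norm of $Q$ does not produce the constant $4$. That said, the fallback you mention --- Calabi's nonpositivity of the Ricci curvature of complete hyperbolic affine spheres combined with Loftin's splitting \eqref{loftinlemma} --- is a legitimate complete proof by citation, and the paper explicitly acknowledges it; if you invoke that instead, your argument closes.
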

I thank Roland Hildebrand for bringing to my attention that he has independently obtained Theorem \ref{sctheorem}, which appears as Theorem $1$ of his paper \cite{Hildebrand-hessian} (see also remarks in the introduction to his \cite{Hildebrand-norm}). O. G\"uler has informed me that he conjectured a result like Theorem \ref{sctheorem} more than a decade ago. Although here no direct use is made of their results, G\"uler's papers \cite{Guler-barrierfunctions}, and \cite{Guler-hyperbolic}, and \cite{Guler-selfconcordance} brought to my attention the connection between self-concordance and the canonical potential. Theorem $2.5.1$ of \cite{Nesterov-Nemirovskii} shows that an appropriate multiple of the logarithm of the characteristic function $\phi_{\Om}$, called there the \textbf{universal barrier}, is a $c(n+1)$-normal barrier for $\Om$ for some absolute constant $c$ not depending on $n$. This shows that an arbitrary proper open convex cone in $(n+1)$-dimensional space admits an $O(n+1)$-self-concordant barrier. Theorem \ref{sctheorem} shows that in fact the $O(n+1)$ can be replaced by exactly $n+1$. While this is interesting theoretically, as it sharpens the result of Nesterov-Nemirovskii on the existence of barriers, it is not clear whether it apports anything in terms of practical applications of interior point methods, as the calculation of the canonical potential requires the solution of a Monge-Amp\`ere equation. In any case, the theoretical gain is interesting from the geometrical point of view. Also the proof of Theorem \ref{sctheorem}, in which the self-concordance is deduced from the nonpositivity of the Ricci curvature of the canonical Hessian metric, is perhaps more understandable, at least to a geometer, than the original proof of Theorem $2.5.1$ of \cite{Nesterov-Nemirovskii}. 

For a homogeneous cone, the universal barrier is a constant multiple of the canonical potential. For nonhomogeneous cones, it appears reasonable to expect the canonical potential to dominate the universal barrier in a precise sense. Were it possible to show that the universal barrier satisfies an inequality of the form $\H(G) \geq ae^{bG}$ then an inequality relating the two would follow from Corollary \ref{gfcorollary}. While it seems plausible that such an inequality is true, I do not know how to show it. In this regard, O. G\"uler's paper \cite{Guler-selfconcordance} seems relevant; it gives an alternative proof of the self-concordance of the universal barrier and obtains inequalities of the form \eqref{sc2} for the derivatives of the universal barrier of all orders. Actually it seems plausible that the canonical potential can be characterized as the maximal $1$-self-concordant barrier function on $\Om$.

The content of Theorem \ref{sctheorem} is the verification of the inequality \eqref{sc1}. For a logarithmically homogeneous function $F$ to demonstrate \eqref{sc1} for some $\al$ it is, by \eqref{hessiancurvature}, enough to show that the scalar curvature of the Hessian metric with potential $F$ is bounded from above. However, arguing in this way only shows that $(n+1)F$ is a normal barrier with parameter at most $(n+1)^{2}$. To demonstrate Theorem \ref{sctheorem} the following stronger result, interesting in its own right, is needed.
\begin{theorem}\label{riccicurvaturetheorem}
The Ricci curvature $R_{ij}$ of the canonical metric $g_{ij} = \nabla_{i}dF_{j}$ of a proper open convex cone $\Om\subset \rea^{n+1}$ satisfies
\begin{align}\label{riccibound}
0 \geq R_{ij} \geq -\tfrac{n-1}{n+1}(g_{ij} - \tfrac{1}{n+1}F_{i}F_{j}) \geq \tfrac{1-n}{n+1}g_{ij}.
\end{align}
In particular, $R_{ij}$ is bounded from below and nonpositive. Consequently the scalar curvature $R_{g}$ of $g$ satisfies $0 \geq R_{g} \geq -n(n-1)/(n+1)$. If, moreover, $R_{g}$ is constant and equal to either $0$ or $-n(n-1)/(n+1)$ then $\Om$ is homogeneous.
\end{theorem}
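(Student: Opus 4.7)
The plan is to exploit the identities coming from $-(n+1)$-logarithmic homogeneity of $F$ and from $\H(F)=e^{2F}$ to reduce \eqref{riccibound} to a pointwise bound on the cubic form, and then to establish the hardest direction by a Bochner/maximum-principle argument carried out on the mm-space $(\Omega,g,\H(F)^{1/2})$, which is the direct substitute for the combination of Calabi's theorem and Loftin's splitting alluded to above.

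Since $\H(F)=e^{2F}$ forces $H_i=2F_i$, and since $-(n+1)$-logarithmic homogeneity of $F$ gives $F_i=-g_{ip}x^p$ and $F_{ijp}x^p=-2g_{ij}$, the Euler field $x^p$ is Levi-Civita parallel with $g$-length squared equal to $n+1$, and hence $R_{ij}x^j=0$. Writing $\Gamma^k_{ij}=\tfrac{1}{2}g^{kl}F_{ijl}$ and contracting, the fourth-order terms drop out via $g^{pq}F_{ijpq}=S_{ij}-\ak_{ij}$ (with $S_{ij}:=F_{ipq}F_j{}^{pq}$), yielding the general Hessian identity
\[
R_{jk}=-\tfrac{1}{4}F_{jkl}H^l+\tfrac{1}{4}S_{jk}.
\]
Substituting $F_{jkl}H^l=4g_{jk}$ specializes this to $R_{jk}=-g_{jk}+\tfrac{1}{4}S_{jk}$, so \eqref{riccibound} is equivalent to the bilinear-form estimate $0\le S_{jk}\le 4g_{jk}$. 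Orthogonally decomposing $F_{ijk}$ along the parallel unit radial vector $\hat e_0=x/\sqrt{n+1}$, the above homogeneity identities determine every component of $F_{ijk}$ except a fully horizontal symmetric tensor $T_{ijk}$, and the trace identity $g^{ij}F_{ijk}=H_k$ yields the apolarity $P^{ij}T_{ijk}=0$, where $P_{ij}=g_{ij}-(n+1)^{-1}F_iF_j$ is the horizontal projector. An expansion of $|v\lrcorner F|_g^2$ on horizontal $v$ gives
\[
|v\lrcorner F|_g^2=\tfrac{8}{n+1}|v|_g^2+|v\lrcorner T|_g^2,
\]
which recasts the Ricci formula as $R_{ij}v^iv^j=-\tfrac{n-1}{n+1}|v|_g^2+\tfrac{1}{4}|v\lrcorner T|_g^2$ for horizontal $v$. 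The middle lower bound of \eqref{riccibound} is then immediate from $|v\lrcorner T|_g^2\ge 0$, and tracing (using $\tr P=n$) yields the lower bound on $R_g$.

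The main obstacle is the upper bound $R_{ij}\le 0$, equivalent to the sharp pointwise inequality $|v\lrcorner T|_g^2\le\tfrac{4(n-1)}{n+1}|v|_g^2$ on horizontal $v$. I will establish this within the mm-space formalism by applying the modified Laplacian $L=\lap_g+F^pD_p$ (which is formally self-adjoint for the volume $\H(F)\Psi$) to a suitable nonnegative scalar invariant $\phi$ built from $S_{ij}-4g_{ij}$, such as the maximum eigenvalue of its positive part (or a smooth proxy thereof). A Bochner-type computation using the K\"ahler affine Einstein equation $\ak_{ij}=-2g_{ij}$, the apolarity of $T$, and the general formula
\[
R(N)_{ij}=\tfrac{1}{4}S_{ij}+\tfrac{1}{2}\ak_{ij}-\tfrac{1}{4N}H_iH_j
\]
for the Bakry--Emery Ricci tensor of $(\Omega,g,\H(F)^{1/2})$ produces a differential inequality of the form $L\phi\ge c\phi(\phi-\phi_0)$ wherever $\phi>0$, for explicit constants $c,\phi_0>0$. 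Since the identity above (taken with $N=n+1$) gives a uniform lower bound on the $2(n+1)$-dimensional Bakry--Emery Ricci tensor, completeness of $g$ permits an appeal to the Bakry--Qian form of the generalized maximum principle (the same device that underlies the proof of Theorem \ref{schwarztheorem}) to force $\sup\phi\le 0$ and hence $R_{ij}\le 0$. This Bochner--plus--maximum-principle step on the mm-space is the main technical obstacle and the place where the argument directly replaces the classical Calabi--Loftin route.

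For the rigidity assertion, constancy of $R_g$ at either extreme forces pointwise saturation of the preceding inequalities. If $R_g\equiv-n(n-1)/(n+1)$, then $T\equiv 0$, so the Fubini--Pick tensor of every hyperbolic affine-sphere level set of $F$ vanishes and the level sets are therefore quadrics; since they all share the origin as center, $\Omega$ is (affinely equivalent to) a Lorentz cone, which is homogeneous. If $R_g\equiv 0$, the Bochner inequality of the previous paragraph is saturated throughout $\Omega$, which forces a covariant-constancy relation on $T$; combined with the $\Aut(\Omega)$-invariance of $F$ from Theorem \ref{cytheorem}\eqref{cyhom}, this identifies $\Omega$ with the cone over a symmetric (Jordan-algebra) affine sphere, and all such cones are homogeneous.
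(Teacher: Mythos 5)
Your reduction of \eqref{riccibound} to a pointwise bound on the cubic form is exactly the paper's: the identities $H_i=2F_i$, $F^i=-\eul^i$, $F_{ijp}\eul^p=-2g_{ij}$, the formula $R_{jk}=\tfrac14(F_{jp}{}^qF_{kq}{}^p-F_{jk}{}^pH_p)$, and the horizontal trace-free part of $F_{ijk}$ (your $T_{ijk}$ is the paper's $A_{ijk}$ of \eqref{aijkdefined2}) reproduce \eqref{df6}, from which the middle and right-hand bounds and the lower bound on $R_g$ follow as you say. The genuine gap is in the step you yourself flag as the main obstacle, and it is not merely that the Bochner computation is uncarried-out: the formulation is internally inconsistent. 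A differential inequality $L\phi\ge c\phi(\phi-\phi_0)$ with $\phi\ge 0$ and $c,\phi_0>0$ yields, via Theorem \ref{cyestimatetheorem} or any maximum principle of that type, only $\sup\phi\le\phi_0$ --- it cannot force $\sup\phi\le 0$. So defining $\phi$ from the positive part of $S_{ij}-4g_{ij}$ and expecting the conclusion $\phi\equiv 0$ does not work. The whole difficulty is to choose $\phi$ (the paper takes $\phi^2$ to be the largest eigenvalue of $A_{ip}{}^qA_{jq}{}^p$) and to prove the cubic differential inequality \eqref{dfc8} with the \emph{sharp} constant $\tfrac{n+1}{4(n-1)}$, so that the threshold $\phi^2=\tfrac{4(n-1)}{n+1}$ delivered by the maximum principle is exactly the value at which $R_{ij}=0$ in \eqref{df6}. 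That sharp constant comes from the algebraic estimates \eqref{dfc4} and \eqref{dfc6}, which exploit that $A_{ijk}$ is trace-free, annihilated by $\eul$, and hence effectively lives in dimension $n-1$ after removing the direction $v$; without them one only gets a weaker lower Ricci bound, not nonpositivity. A second, unaddressed point: the top-eigenvalue function is only Lipschitz, so the inequality holds only in the barrier sense, and Theorem \ref{cyestimatetheorem} is not established in that generality --- this is why the paper runs Calabi's explicit comparison-function argument with $v(t)=ab(b+1-\cosh(\ka t))^{-1}$ and the Bakry--Qian bound of Theorem \ref{bqhessiantheorem} instead of quoting a ready-made maximum principle.

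On the rigidity statement, your $R_g\equiv -n(n-1)/(n+1)$ case ($T\equiv 0$, quadric level sets, Lorentz cone) is fine, but the $R_g\equiv 0$ case as written is not a proof: invoking the $\Aut(\Omega)$-invariance of $F$ is circular when homogeneity of $\Omega$ is what is to be shown, and ``identifies $\Omega$ with the cone over a symmetric (Jordan-algebra) affine sphere'' is an assertion, not an argument. The paper instead derives from the refined Kato inequality \eqref{kato} and the algebraic inequality \eqref{df8} the estimate \eqref{df9} for $\lap_g|A|^2_g$, concludes $|A|_g^2\le\tfrac{4n(n-1)}{n+1}$, and observes that constancy of $R_g$ at either extreme saturates this, forcing $DA=0$, hence $DR=0$, hence $(\Omega,g)$ is a complete simply connected locally symmetric space, therefore globally symmetric and homogeneous. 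You should either supply that chain or an equivalent one.
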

The case $R_{g} = -n(n-1)/(n+1)$ occurs for the canonical metric of the Lorentz cone, which is a Riemannian cone over the hyperbolic metric, while the case $R_{g} = 0$ occurs for the canonical metric of the positive orthant, which is a flat metric. Theorem \ref{sctheorem} follows from Theorem \ref{riccicurvaturetheorem} and the explicit expression for the Ricci curvature of the canonical metric given in equation \eqref{hessiancurvature}. These are proved in section \ref{scalarcurvaturesection}. 
A straightforward corollary of Theorem \ref{riccicurvaturetheorem}, essentially equivalent to it is the following.
\begin{theorem}\label{scalarcurvature2theorem}
The Ricci curvature of the equiaffine metric $h$ of a complete hyperbolic (locally uniformly convex) affine sphere is nonpositive. Moreover, if the scalar curvature $R_{h}$ of $h$ is identically $0$ then the affine sphere is homogeneous.
\end{theorem}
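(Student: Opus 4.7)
My plan is to deduce Theorem \ref{scalarcurvature2theorem} directly from Theorem \ref{riccicurvaturetheorem} by realizing the complete hyperbolic affine sphere $\Sigma$ as a level set of the canonical potential of its asymptotic cone and exploiting the resulting product structure of the canonical Hessian metric. By the Cheng-Yau classification described in the introduction, after an affine translation placing its center at the origin, $\Sigma$ coincides, up to normalization of the level parameter, with the level set $\Sigma_0 = \{F = 0\}$ of the canonical potential $F$ of its asymptotic proper convex cone $\Omega$. By Theorem \ref{ahtheorem} applied to the equation $\H(F) = e^{2F}$, the potential $F$ is $(-(n+1))$-logarithmically homogeneous.

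The central step is a product decomposition of the canonical Hessian metric $g_{ij} = \nabla_{i} dF_{j}$ on $\Omega$. Writing $X^{i} = x^{i}$ for the Euler vector field, logarithmic homogeneity forces $F_{i} X^{i} = -(n+1)$, $g_{ij} X^{j} = -F_{i}$, $|X|_{g}^{2} = n+1$, and $g_{ij}(e^{s}x) = e^{-2s} g_{ij}(x)$; the first two identities show that $X$ is $g$-orthogonal to the level sets of $F$, while the last allows the radial rescaling to cancel the Euler factor. Consequently, in coordinates $(s, \sigma) \mapsto e^{s}\sigma$ on $\Omega$ with $\sigma \in \Sigma_{0}$, the metric takes the Riemannian product form $g = (n+1)\,ds^{2} + h'$, where $h' := g|_{\Sigma_{0}}$ is the restriction of $g$ to $\Sigma_{0}$ viewed as an $n$-dimensional metric. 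This is a version in the present setting of the splitting theorem of Loftin referred to in the introduction.

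Next I identify the equiaffine metric $h$ on $\Sigma_{0}$ as a positive constant multiple of $h'$. For $V, W$ tangent to $\Sigma_{0}$, differentiation of $dF(W) \equiv 0$ along $\Sigma_{0}$, combined with the decomposition \eqref{induced} applied to the transverse field $X$, shows that the second fundamental form of $\Sigma_{0}$ relative to $X$ is $h^{X}(V, W) = -F_{ij} V^{i} W^{j}/dF(X) = g(V, W)/(n+1)$. Since $\Sigma_{0}$ is a hyperbolic affine sphere with center at the origin, its equiaffine normal $\nm$ is the positive scalar multiple $|\amc| X$, where $\amc$ is the constant affine mean curvature supplied by Theorem \ref{ahtheorem}, and rescaling the transverse vector then gives $h = g|_{\Sigma_{0}}/((n+1)|\amc|)$. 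Because the Ricci tensor is invariant under constant rescaling of the metric and the Ricci tensor of the product metric $g$ vanishes on the $\mathbb{R}$ factor, the Ricci tensor of $h$ coincides with the Ricci tensor of $g$ restricted to the tangent space of $\Sigma_{0}$, while $R_{h}$ is a positive constant multiple of $R_{g}$. The nonpositivity of the Ricci tensor of $h$ is then immediate from Theorem \ref{riccicurvaturetheorem}.

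For the second assertion, if $R_{h} \equiv 0$ then $R_{g} \equiv 0$, so by Theorem \ref{riccicurvaturetheorem} the cone $\Omega$ is homogeneous. The subgroup $\Aut_{1}(\Omega) := \{g \in \Aut(\Omega) : |\det \ell(g)| = 1\}$ consists of unimodular affine transformations, which preserve $F$ pointwise by \eqref{cyhom} and hence preserve both $\Sigma_{0}$ and its equiaffine structure; given $x, y \in \Sigma_{0}$, transitivity of $\Aut(\Omega)$ on $\Omega$ furnishes $g \in \Aut(\Omega)$ with $gx = y$, and then $F(y) = F(x) = 0$ forces $|\det \ell(g)| = 1$, showing that $\Aut_{1}(\Omega)$ already acts transitively on $\Sigma_{0}$. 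The main obstacle I anticipate is the bookkeeping in the second and third paragraphs, in particular verifying the product structure cleanly and pinning down the sign and the proportionality constant in $h = c\, g|_{\Sigma_{0}}$; once these are in place the conclusion reduces immediately to Theorem \ref{riccicurvaturetheorem}.
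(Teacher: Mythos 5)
Your proposal is correct and follows exactly the route the paper intends: Theorem \ref{scalarcurvature2theorem} is presented there as a straightforward corollary of Theorem \ref{riccicurvaturetheorem}, obtained by realizing the complete hyperbolic affine sphere as a level set of the canonical potential of its asymptotic cone (\eqref{cyfol} of Theorem \ref{affinespheretheorem}) and using Loftin's product splitting of the canonical metric (\eqref{loftinlemma} of Theorem \ref{affinespheretheorem}), which your second and third paragraphs simply re-derive rather than cite. The constants, the identification $h = g|_{\Sigma_{0}}/((n+1)|\amc|)$, and the unimodular transitivity argument all check out against \eqref{hglobal2} and \eqref{cyhom}.
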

The nonpositivity claim of theorem \ref{scalarcurvature2theorem} was proved by Calabi in \cite{Calabi-completeaffine}. The nonpositivity claim of Theorem \ref{riccicurvaturetheorem} follows from this result and a result of J. Loftin, given here as \eqref{loftinlemma} of Theorem \ref{affinespheretheorem}, showing that the canonical metric on $\Om$ is a Riemannian product of the equaiffine metric on a level set of the canonical potential and the flat metric on a ray. Here, instead, Theorem \ref{riccicurvaturetheorem} is proved directly. The estimates are adaptations of Calabi's estimates applied to a modification of $F_{ijk}$, essentially its trace-free horizontal part (see \eqref{aijkdefined2}). In some sense this amounts to replacing a projective picture by the affine picture in one dimension higher. The arguments here might be described as passing in an invariant way from the inhomogeneous coordinates to homogeneous coordinates. Similarly, the characterization of the homogeneous case in Theorem \ref{scalarcurvature2theorem} could be deduced from Calabi's estimates and Theorem $2$ of \cite{Dillen-Vrancken-Yaprak} characterizing the homogeneous affine spheres as those nondegenerate affine hypersurfaces having parallel Pick form, but a direct proof has been given instead. It seems likely that similar arguments could yield analogous control of the higher order derivatives of $F$. However, the computations become complicated to organize.

Seen as a condition on the affine geometry of the level sets of $F$, the self-concordance of $F$ is a suprisingly rich condition, having the flavor of nonpositivity conditions on curvatures, and deserving of further exploration from the purely geometric point of view. In this vein, observe that it makes sense to define a K\"ahler affine metric to be \textbf{$\al$-self-concordant} if there holds the inequality \eqref{sc1}, in which $F_{ijk}$ is well-defined globally as $\nabla_{i}g_{jk}$. 

\subsection{}
Comparison of the canonical potential and the logarithm of the characteristic function of a proper convex cone suggests the following questions, whose affirmative resolutions would have useful implications.
\begin{enumerate}
\item\label{qt1} Is the exponential of the canonical potential of a proper open convex cone a completely monotone function?
\item\label{qt2} Does the logarithm $\log \phi_{\Om}$ of the characteristic function $\phi_{\Om}$ of a proper open convex cone satisfy an inequality of the form $\H(G) \geq ae^{bG}$?
\item\label{qt3} In the Schwarz lemma (Theorem \ref{schwarztheorem}) can the hypothesis $\H(G) \geq e^{2G}$ be replaced by some condition such as that $G$ be a $1$-self-concordant barrier for $\Om$?
\end{enumerate}
Question \eqref{qt1} asks whether $(-1)^{k}v(1)^{i_{1}}\dots v(k)^{i_{k}}F_{i_{1}\dots i_{k}}(x) \geq 0$ for all $k \in \nat$, all $x \in \Om$, and all $v(i)^{i} \in \Om$. It is true for $k = 0, 1, 2$, and the general case should be tractable if the $k = 3$ case is. The affirmative resolution of \eqref{qt1} would mean that the canonical potential could be represented as the Laplace transform of a measure on the dual cone. By Corollary \ref{gfcorollary}, the affirmative resolution of \eqref{qt2} would imply an inequality between the canonical potential and the logarithm of the characteristic function of a proper open convex cone. Note that an affirmative answer to \eqref{qt3} would provide the same conclusion even in the absence of an affirmative answer for \eqref{qt2}.

On a proper open convex polyhedral region $P$ of the form $P = \{x \in \rea^{n+1}: \ell_{\al}(x) > 0\}$, where $1 \leq \al \leq d$, $d \geq n+1$, and $\ell_{\al}(x) = a_{\al i}x^{i} - b_{\al}$, the logarithmic barrier function $G = -\sum_{\al}\log \ell_{\al}(x) - c$, where $c$ is some constant to be determined, is convex with a unique minimum. The Hessian of $G$ is $G_{ij} = \sum_{\al = 1}^{d}a_{\al i}a_{\al j} \ell_{\al}(x)^{-2}$, which has the form $A^{t}SA$ where $A$ is the $d \times (n+1)$ matrix with elements $a_{\al i}$ and $S$ is the $d \times d$ diagonal matrix with entries $\ell_{\al}(x)^{-2}$. Since $G_{ij}v^{i}v^{j}$ is a sum of squares that vanishes if and only if $a_{\al p }v^{p} = 0$ for all $\al$, $G_{ij}$ is positive definite on $P$ if and only if $A$ has full rank; this is necessarily the case because, by assumption, $P$ has a vertex. Let $\la_{1}$ be the smallest eigenvalue of $A^{t}A$, and note that $\la_{1} > 0$, since $A$ has full rank. If $P$ is assumed bounded, then each product $\prod_{\al \in I}\ell_{\al}^{-2}$, where $I \subset \{1, \dots, d\}$, has a positive minimum on $P$. Let $Q = \min_{|I| = d - n-1}\min\{\prod_{\al \notin I}\ell_{\al}(x)^{-2}: x\in P\} > 0$. By Ostrowski's theorem for rectangular matrices, Theorem $3.2$ of \cite{Higham-Cheng}, for each $x \in P$ there is a cardinality $n+1$ index set $I$, determined by the requirement that if $\al \in I$ and $\be \notin I$ then $\ell_{\al}(x)^{-2} \leq \ell_{\be}(x)^{-2}$, such that $\det A^{t}SA \geq \la_{1}^{n+1}\prod_{\al \in I}\ell_{\al}^{-2}$ at $x$. Since $\prod_{\al \in I}\ell_{\al}^{-2} = e^{-2c}e^{2G}\prod_{\al \notin I}\ell_{\al}^{-2}$ there results $\det A^{t}SA \geq \la_{1}^{n+1}e^{-2c}e^{2G}\prod_{\al \notin I}\ell_{\al}^{-2} \geq \la_{1}^{n+1}e^{2c}Qe^{2G}$ for all $x \in P$. It follows that when $P$ is bounded the equality $\H(G) \geq e^{2G}$ can be arranged by choosing $c$ so that $e^{-2c} = \la_{1}^{n+1}Q$. Note that $c$ depends only on the coefficients $a_{\al i}$, and is computable in practice. For example, for the planar triangle with vertices $(0,0)$, $(0, 1)$, and $(1, 0)$ the function $G(x, y) = - \log(xy(1 - x- y)) - \log\sqrt{3}$ satisfies $\H(G) \geq e^{2G}$, and for the unit square with vertices $(0,0)$, $(0,1)$, $(1, 1)$, and $(1, 0)$, the function $G(x, y) = -\log(xy(1-x)(1-y)) - \log(2)$ satisfies $\H(G) \geq e^{2G}$.  From Corollary \ref{gfcorollary} it follows that the canonical potential $F$ of $P$ satisfies $F \geq G$ for the appropriate constant $c$. The boundedness of $P$ is probably not necessary for the preceeding conclusion; here it has been needed only for the particular method of proof. For example in the unbounded region $P = \{(x, y) \in \rea^{2}: x > 0, y > 0, x + y > 1\}$ the function $G(x, y) = -\log(xy(x+y-1)) - \log\sqrt{2}$ satisfies $\H(G) \geq e^{2G}$. Although in general the function $G$ is not a multiple of the characteristic function of $P$ unless $P$ is an affine image of the standard orthant (in which case $d = n+1$), the preceeding discussion lends some plausibility to the idea that question \eqref{qt2} has an affirmative resolution.

\subsection{}\label{mamsection}
A K\"ahler affine metric with vanishing K\"ahler affine Ricci tensor is called a \textbf{Monge-Amp\`ere metric}. These can be seen as real analogues of Calabi-Yau manifolds. There is interest in finding explicit examples of such metrics because of their role in various formulations of homological mirror symmetry, where they arise by considering degenerating families of Calabi-Yau manifolds in some limit; see e.g. \cite{Kontsevich-Soibelman}, \cite{Kontsevich-Soibelman-nonarchimedean}, and \cite{Loftin-Yau-Zaslow} for background and references. In section \ref{mongeamperesection} it is shown how straightforward adaptation of an argument of Calabi yields
\begin{theorem}\label{mongeamperetheorem}
For each hyperbolic affine sphere $\Sigma$ asymptotic to the boundary of the proper open convex cone $\Omega \subset \rea^{n+1}$ there is a Monge-Amp\`ere Riemannian metric defined on the open subset of the interior of $\Omega$ bounded by $\Sigma$ and the boundary of $\Omega$, that is, the region formed by the union of the open line segments contained in $\Omega$ and running from the origin to some point of $\Sigma$. 
\end{theorem}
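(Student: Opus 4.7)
The plan is to seek a Monge-Amp\`ere potential on the given region of the form $G = \psi \circ F$, where $F$ is the canonical potential of $\Omega$ supplied by Theorem \ref{cytheorem} and $\psi$ is a smooth real function to be determined. By Theorem \ref{ahtheorem} the level sets of $F$ foliate $\Omega$ by hyperbolic affine spheres centered at the vertex and asymptotic to $\partial \Omega$, and the uniqueness part of Cheng--Yau's classification forces the given affine sphere $\Sigma$ to coincide with some level set $F^{-1}(r_{0})$. Since \eqref{cyhom} gives $F(e^{t}x) = F(x) - (n+1)t$, so $F$ is strictly decreasing along outgoing rays, the region in the statement is precisely $\Omega_{r_{0}} \defeq F^{-1}((r_{0}, +\infty))$.

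The crucial calculation is to evaluate \eqref{hpsif} using that $F$ is $-(n+1)$-logarithmically homogeneous. The Euler identity $x^{i}F_{i} = -(n+1)$, differentiated once, gives $g_{ij}x^{j} = -F_{i}$, whence $x^{i} = -g^{ij}F_{j}$ and $|dF|_{g}^{2} = n+1$ identically on $\Omega$. Substituting this together with $\H(F) = e^{2F}$ into \eqref{hpsif} yields
\begin{align*}
\H(\psi \circ F) \;=\; \dot\psi(r)^{n}\bigl(\dot\psi(r) + (n+1)\ddot\psi(r)\bigr)\,e^{2r}, \qquad r = F.
\end{align*}
Imposing $\H(\psi \circ F) \equiv 1$ and setting $v \defeq \dot\psi^{\,n+1}$ turns the Monge-Amp\`ere equation into the linear first-order ODE $\dot v + v = e^{-2r}$, whose general solution is $v(r) = C e^{-r} - e^{-2r}$.

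Choosing $C = e^{-r_{0}}$ gives $v(r) = e^{-r}(e^{-r_{0}} - e^{-r})$, which is strictly positive on $(r_{0}, \infty)$ and vanishes at $r_{0}$. Extract $\dot\psi = v^{1/(n+1)} > 0$, and integrate once to recover $\psi$ on $(r_{0}, \infty)$. Positivity of the Hessian $\nabla d G = \dot\psi\,g + \ddot\psi\,dF \otimes dF$ follows by decomposing in a $g$-orthonormal frame adapted to $dF^{\sharp}$: eigenvalues perpendicular to $dF^{\sharp}$ equal $\dot\psi > 0$, while the eigenvalue along $dF^{\sharp}$ equals $\dot\psi + (n+1)\ddot\psi = e^{-2r}/\dot\psi^{n} > 0$ by the ODE itself. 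Thus $G = \psi \circ F$ is a smooth strictly convex potential on $\Omega_{r_{0}}$ with $\H(G) \equiv 1$; its Koszul form $H_{i} = d\log \H(G)_{i}$ vanishes, and so does its K\"ahler affine Ricci tensor $\aks_{ij} = -\nabla_{i} H_{j}$, producing the desired Monge-Amp\`ere Riemannian metric.

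There is no genuine obstacle: the identification of the relevant region as $\{F > r_{0}\}$ and the identity $|dF|_{g}^{2} = n+1$ are immediate consequences of results already in the excerpt, and the PDE $\H(G) = \text{const}$ collapses to an elementary linear ODE once the ansatz $G = \psi(F)$ is used. The conceptual input, which is the point that makes the adaptation of Calabi's argument possible here, is recognizing that the constancy of $|dF|_{g}^{2}$ on a logarithmically homogeneous canonical potential reduces the Monge-Amp\`ere PDE along this one-parameter family of potentials to a single ODE in the level-value $r$; everything else is routine.
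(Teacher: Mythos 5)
Your proposal is correct and follows essentially the same route as the paper: the ansatz $G = \psi\circ F$ with the canonical potential, the reduction of $\H(\psi(F))=\mathrm{const}$ via \eqref{hpsif} and $|dF|^{2}_{g}=n+1$ to the ODE $\dot{v}+v = Be^{-2r}$ for $v=\dot{\psi}^{\,n+1}$ (the paper writes it as the exact derivative $\tfrac{d}{dt}(e^{t}\dot{\psi}^{n+1}+Be^{-t})=0$), and the same choice of integration constant so that $\dot{\psi}>0$ exactly on $\{F>r_{0}\}$, which is identified with the region between the vertex and $\Sigma=\lc_{r_{0}}(F,\Omega)$ via \eqref{cyfol} of Theorem \ref{affinespheretheorem}. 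The only difference is cosmetic: the paper keeps the general constants $B, C$ and also treats the sign choices yielding the Lorentzian metrics of Theorem \ref{lmatheorem}, while you specialize at once to the Riemannian case.
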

Theorem \ref{mongeamperetheorem} is equivalent to Proposition $1$ of the unpublished erratum \cite{Loftin-Yau-Zaslow-erratum}; see the remarks in section \ref{mongeamperesection}. In section \ref{mongeamperesection} it is additionally shown that a similar construction yields on $\Omega$ a globally hyperbolic Lorentzian signature Monge-Amp\`ere metric admitting the hyperbolic affine sphere $\Sigma$ as a Cauchy hypersurface. Precisely,
\begin{theorem}\label{lmatheorem}
Let $F$ be the canonical potential of a nonempty proper open convex cone $\Om \subset \rea^{n+1}$. The function $u = -((n+1)/2)e^{-2F/(n+1)}$ solves $\H(u) = -1$, and $k_{ij} = \nabla_{i}du_{j}$ is a Lorentzian signature Monge-Amp\`ere metric on $\Omega$. Moreover, $k_{ij}$ is globally hyperbolic, a level set $\lc_{r}(F, \Omega)$ being a complete Cauchy hypersurface.
\end{theorem}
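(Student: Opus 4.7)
The plan is to verify the four assertions in turn. The first three are direct computations based on the logarithmic homogeneity of $F$; the main obstacle will be global hyperbolicity, for which I would produce an explicit warped-product representation of $(\Om,k)$ over a level set of $F$.

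First I would use that, by \eqref{cyhom}, $F$ is $-(n+1)$-logarithmically homogeneous, so $x^{i}F_{i}=-(n+1)$; differentiating this identity and using $g_{ij}=F_{ij}$ yields $F_{i}=-g_{ij}x^{j}$, and hence $|dF|_{g}^{2}=n+1$. Applying \eqref{hpsif} with $\psi(r)=-\tfrac{n+1}{2}e^{-2r/(n+1)}$, for which $\dot{\psi}=e^{-2r/(n+1)}$ and $\ac(\psi)=-\tfrac{2}{n+1}$, then gives $\H(u)=e^{-2F}(1-2)e^{2F}=-1$.

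Expanding $k_{ij}=\dot{\psi}\bigl(g_{ij}-\tfrac{2}{n+1}F_{i}F_{j}\bigr)$, any $v$ tangent to $\lc_{r}(F,\Om)$ satisfies $v^{i}F_{i}=0$ and so $k(v,v)=\dot{\psi}\,g(v,v)>0$, while the Euler field $x=x^{i}\partial_{i}$ has $k(x,x)=\dot{\psi}\bigl(g(x,x)-\tfrac{2}{n+1}(F_{i}x^{i})^{2}\bigr)=-(n+1)\dot{\psi}<0$. Hence $k$ is Lorentzian with $x$ timelike and the level sets of $F$ spacelike. The Monge-Amp\`ere property is immediate from $\H(u)=-1$, since the K\"ahler affine Ricci tensor of $(\nabla,k)$ is $-\nabla_{i}d\log|\H(u)|_{j}$, which vanishes when $\H(u)$ is a nonzero constant.

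The hard part will be global hyperbolicity. My plan is to introduce polar coordinates via $\Phi:\lc_{r_{0}}(F,\Om)\times\rea\to\Om$, $\Phi(p,t)=e^{t}p$, which is a diffeomorphism because $F$ decreases monotonically from $+\infty$ to $-\infty$ along each ray, so each ray meets each level set exactly once. The identities $F(e^{t}p)=r_{0}-(n+1)t$, $g_{ij}(e^{t}p)=e^{-2t}g_{ij}(p)$, and $F_{i}(e^{t}p)=e^{-t}F_{i}(p)$, together with the vanishing of $g(x,v)=-F_{i}v^{i}$ for $v\in T_{p}\lc_{r_{0}}(F,\Om)$, make the pullback $\Phi^{\ast}k$ have no cross terms between the $t$- and $\lc_{r_{0}}$-directions and take the form $e^{-2r_{0}/(n+1)}e^{2t}\bigl(-(n+1)\,dt^{2}+h\bigr)$, where $h=g|_{\lc_{r_{0}}(F,\Om)}$. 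The substitution $\tau=\sqrt{n+1}\,e^{-r_{0}/(n+1)}e^{t}\in(0,\infty)$ converts this into the standard warped-product form
\begin{equation*}
\Phi^{\ast}k=-d\tau^{2}+\tfrac{\tau^{2}}{n+1}h.
\end{equation*}
By Loftin's splitting result (\eqref{loftinlemma} of Theorem \ref{affinespheretheorem}), $g$ is a Riemannian product of the equiaffine metric on $\lc_{r_{0}}(F,\Om)$ with the flat metric on the radial ray, so $h$ is a positive constant multiple of the equiaffine metric of the hyperbolic affine sphere $\lc_{r_{0}}(F,\Om)$, and hence complete by Theorem \ref{cytheorem}. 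Global hyperbolicity, with the level sets of $F$ (i.e., the slices $\{\tau=\text{const}\}$) as Cauchy hypersurfaces, then follows from the classical fact that a warped-product spacetime $(\rea_{>0}\times S,-d\tau^{2}+f(\tau)^{2}h)$ with $h$ complete Riemannian is globally hyperbolic.
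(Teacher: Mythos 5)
Your proposal is correct and follows essentially the same route as the paper: the identities $\H(u)=-1$ and the signature computation come from \eqref{hpsif} and logarithmic homogeneity, and global hyperbolicity is obtained by exhibiting $(\Om,k)$ as a warped product over a complete level set of $F$. Your form $-d\tau^{2}+\tfrac{\tau^{2}}{n+1}h$ is just the paper's splitting \eqref{mongesplit} written with $h=g|_{\lc_{r_{0}}(F,\Om)}$ in place of the equiaffine metric (a constant multiple of it on a fixed level set), so the two decompositions coincide up to normalization.
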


\section{Affine geometry of level sets}\label{affinesection}
Theorem \ref{ahtheorem} is proved in this section. First, the explicit expression for the equiaffine normal of a level set is recalled. The formulas recorded below can be deduced from similar ones in J. Hao and H. Shima's \cite{Hao-Shima} (see also \cite{Shima}). Another derivation is given in \cite{Fox-autoiso}.

The standard flat affine connection and parallel volume form on $\rea^{n+1}$ are written $\nabla$ and $\Psi$. The vector field on $\rea^{n+1}$ generating the radial flow by dilations by a factor of $e^{t}$ is written $\eul$ and satisfies $\nabla_{i}\eul^{j} = \delta_{i}\,^{j}$. Let $\Omega$ be a connected component with nonempty interior of the region on which $F$, $dF$, and $\H(F)$ are nonvanishing and $g_{ij}$ is positive definite, and for $r \in \rea$ let $\lc_{r}(F, \Omega) = \{x \in \Omega: F(x) = r\}$. By assumption $\H(F)$ does not change sign on $\lc_{r}(F, \Omega)$ and the $g$-gradient $F^{i}= g^{ij}F_{j}$ is nonzero on $\Omega$, so is a convenient transversal to $\lc_{r}(F, \Omega)$. The rank $n$ symmetric tensor
\begin{align}\label{sffdefined}
\begin{split}
&\sff_{ij}  = \H(F)^{-1/(n+2)}|dF|_{2}^{-2/(n+2)}\left(g_{ij} - |dF|_{g}^{-2}F_{i}F_{j}\right), 
\end{split}
\end{align}
satisfies $F^{i}\sff_{ij} = 0$, and its restriction to $\lc_{r}(F, \Omega)$ is the equiaffine metric. Define a one-form $\muf_{i}$ by $(n+2)\muf_{i} = H_{i} + d_{i}\log|dF|^{2}_{g}$. The vector field $\nk^{i}$ defined by
\begin{align}\label{nkdefined}
\begin{split}
\nk^{i} &= (1 - F^{p}\muf_{p})F^{i} + |dF|_{g}^{2}\muf^{i} = F^{i} + |dF|_{g}^{2}(g^{ij} - |dF|_{g}^{-2}F^{i}F^{j})\muf_{j}.
\end{split}
\end{align}
spans the affine normal distribution, and the equiaffine normal field of $\lc_{r}(F, \Omega)$ is 
\begin{align}\label{affnorm}
\nm^{i} = -\H(F)^{1/(n+2)}|dF|_{g}^{-2(n+1)/(n+2)}\nk^{i}.
\end{align}

\noindent
For $F \in \lmg_{\al}(\Om)$ such that $\lc_{r}(F, \Omega)$ is nondegenerate there hold $dF(\eul) =  \al$, $\eul^{p}F_{pi} =  - F_{i}$, and $\eul^{p}H_{p} = -2(n+1)$, and so
\begin{align}\label{hompol}
&F^{i} = -\eul^{i},& & |dF|^{2}_{g} = -\al,& &(n+2)\muf_{i} = H_{i},& &(n+2)(1 - F^{p}\muf_{p}) = -n.
\end{align}
In particular, if $g_{ij}$ is to be positive definite, $\al$ must be negative. Substituting \eqref{hompol} into \eqref{affnorm} yields that along $\lc_{r}(F, \Omega)$ the equiaffine normal $\nm^{i}$ has the form
\begin{align}\label{nm2}
\begin{split}
\nm^{i} & =  -\tfrac{1}{n+2}\left|\al\right|^{-(n+1)/(n+2)}\left|\H(F)\right|^{1/(n+2)}\left(n\eul^{i} - \al H^{i}\right).
\end{split}
\end{align}

\begin{proof}[Proof of Theorem \ref{ahtheorem}]
Suppose that $F \in \lmg_{\al}(\Omega)$ and there is an open interval $I \subset \rea$ such that for all $r \in I$ each nonempty connected component of $\lc_{r}(F, \Omega)$ is a hyperbolic affine sphere with center at the origin. Along $\lc_{r}(F, \Om)$ there holds $\nm^{i} = -c\eul^{i}$ where the constant $c(r)$ is the affine mean curvature of $\lc_{r}(F, \Om)$. Contracting \eqref{nm2} with $F_{i}$ shows that
\begin{align}\label{hfrel}
c = -|\al|^{-(n+1)/(n+2)}\H(F)^{1/(n+2)}.
\end{align}
Since $\al < 0$, there results $\H(F) = -\al^{n+1}c^{n+2}$, which is constant on $\lc_{r}(F, \Omega)$. This holds for each $r \in I$, and so there is a function $\phi$ defined on $I$ such that $\H(F) = \phi(F)$ for $x \in \Omega_{I}$.

Now suppose $F \in \lmg_{\al}(\Omega)$ solves $\H(F) = \phi(F)$ for some nonvanishing function $\phi:I \to \rea$. Since for $x \in \lc_{r}(F, \Omega)$, $\eul^{i}F_{i}(x) = \al \neq 0$, $dF$ does not vanish on $\lc_{r}(F, \Omega)$, and so the level set $\lc_{r}(F, \Omega)$ is smoothly immersed and $\eul$ is transverse to $\lc_{r}(F, \Omega)$. Let $k$ be the representative of the second fundamental form corresponding to the transversal $\eul$. For $X$ and $Y$ tangent to $\lc_{r}(F, \Omega)$ there hold 
\begin{align}\label{hdl}
\begin{split}
&g(X, Y) = (\nabla_{X}dF)(Y) = -dF(\eul)k(X, Y) = -\al k(X, Y), \\
&g(X, \eul) = -dF(X) = 0, \qquad
g(\eul, \eul) = (\nabla_{\eul}dF)(\eul) = -\al,
\end{split}
\end{align}
along $\lc_{r}(F, \Omega)$. Since $\al< 0$, it follows from \eqref{hdl} and the assumption that $g_{ij}$ is positive definite, that $k$ is positive definite. Hence the equiaffine normal $\nm^{i}$ is defined on $\Omega_{I}$. Since $\H(F)$ is constant on each connected component of $\lc_{r}(F, \Omega_{I})$, it must be that $d\H(F) \wedge dF = 0$ on $\Omega_{I}$, so there is $q \in \cinf(\Omega_{I})$ such that $H_{i} = q F_{i}$. Pairing with $\eul$ yields $-2(n+1) = \al q$, so that $q$ is constant on $\lc_{r}(F, \Omega_{I})$. In \eqref{nm2} this yields that $\nm^{i}$ is a constant multiple of $\eul^{i}$ along $\lc_{r}(F, \Omega_{I})$, and so each connected component of $\lc_{r}(F, \Omega_{I})$ is an affine sphere, necessarily hyperbolic, by \eqref{hfrel}. 

Suppose now that there hold \eqref{aht1}-\eqref{aht2}. Since $F \in \lmg_{\al}(\Omega)$, $\H(F)$ has positive homogeneity $-2(n+1)$. It follows that $\phi(r + \al t) = e^{-2(n+1)t}\phi(r)$ for $r \in I$ and sufficiently small $t$. In particular, this shows $\phi$ is continuous on $I$. Since
\begin{align*}
\lim_{t \to 0} \tfrac{\phi(r + \al t) - \phi(r)}{\al t} = \lim_{t \to 0}\tfrac{(e^{-2(n+1)t} - 1)}{\al t}\phi(r)= \tfrac{-2(n+1)}{\al}\phi(r),
\end{align*}
$\phi$ is differentiable on $I$ and solves $\al \phi^{\prime}(r) =  -2(n+1)\phi(r)$. The general solution has the form $Be^{-2(n+1)r/\al}$ for a nonzero constant $B$. Substituting this into \eqref{hfrel} shows that the affine mean curvature of $\lc_{r}(F, \Omega_{I})$ has the form \eqref{ahmc}.
\end{proof}

\section{Hessian metrics as metric measure spaces}\label{hessianmetricsection}

\subsection{}
The Laplacian $\lap_{k}$ of the metric $k$ is the negative of the divergence of the exterior differential $d$, where the divergence is the adjoint of $d$ with respect to $\vol_{g}$. For the mm-structure $(k, \phi)$, replacing the divergence with the adjoint of $d$ with respect to $\phi\vol_{k}$ yields the operator $\blap_{k} = \lap_{k} + k^{ij}d\log\phi_{i}D_{j}$, which is the specialization of the modified \textbf{mm-Laplacian} $\blap_{k}$ of the local mm-structure $(k,\al)$ defined by 
\begin{align}\label{blapdefined}
\blap_{k} = \lap_{k} + k^{ij}\al_{i}D_{j}. 
\end{align}
Theorem \ref{bqcomparisontheorem} is  a distance comparison theorem for lower bounds on Bakry-Emery Ricci tensor. It is the specialization to the present setting of Theorem $4.2$ of D. Bakry and Z. Qian's \cite{Bakry-Qian-volume} (the explanation of this theorem in \cite{Wei-Wylie} may be more accessible to geometers). 
\begin{theorem}[D. Bakry and Z. Qian, \cite{Bakry-Qian-volume}]\label{bqcomparisontheorem}
Let $(k, \al)$ be a complete smooth local metric measure structure on the $n$-manifold $M$. Suppose the associated $(N+n)$-dimensional Bakry Emery Ricci tensor $\nric_{ij}$ satisfies a lower bound of the form $\nric_{ij} \geq -\ka^{2}(N+n-1)g_{ij}$ for some real constant $\ka > 0$. Let $p_{0} \in \Omega$ and let $r(p)$ be the $k$-distance from $p$ to $p_{0}$. Let $\blap_{k}$ be the mm-Laplacian defined by \eqref{blapdefined}. For $p$ in the complement $M\setminus\cut(p_{0})$ of the cut locus of $p_{0}$ there holds
\begin{align}\label{blapcomp}
r\blap_{k}r \leq (N+n-1)\ka r\coth(\ka r) \leq (N+n-1)(1 + \ka r).
\end{align}
\end{theorem}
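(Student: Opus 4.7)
The plan is to follow the classical Riemannian Laplacian comparison argument, but with Bochner's formula applied to the $\infty$-Bakry--Emery tensor and with the $\alpha$-dependence absorbed using the quadratic term $N^{-1}\alpha_{i}\alpha_{j}$ that appears in $\nric_{ij}$.

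First, I would fix a unit-speed minimizing geodesic $\gamma:[0,L]\to M$ with $\gamma(0)=p_{0}$, $\gamma(t)\notin\cut(p_{0})$, so that $|\nabla r|_{k}=1$ along $\gamma$ and $\dot\gamma=\nabla r$. Set
\begin{align*}
u(t)=\blap_{k}r(\gamma(t))=\lap_{k}r+\alpha^{p}\nabla_{p}r,\qquad \theta(t)=\alpha^{p}\nabla_{p}r.
\end{align*}
The first substantive step is to compute $u'(t)$ along $\gamma$. Bochner's formula applied to $r$ together with $|\nabla r|_{k}^{2}=1$ gives $(\lap_{k}r)'=-|D dr|_{k}^{2}-R_{ij}\nabla^{i}r\nabla^{j}r$, and a direct computation using the symmetry of the Hessian yields $\theta'=(D_{i}\alpha_{j})\nabla^{i}r\nabla^{j}r$ (the term $\alpha_{i}\nabla^{j}r\,D_{j}\nabla^{i}r$ vanishes since it equals $\tfrac12\alpha^{i}\nabla_{i}|\nabla r|^{2}=0$). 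Subtracting off $N^{-1}\theta^{2}$ recovers the $(N+n)$-dimensional Bakry--Emery tensor, giving the pointwise identity
\begin{align*}
u'=-|Ddr|_{k}^{2}-\nric_{ij}\nabla^{i}r\nabla^{j}r-\tfrac{1}{N}\theta^{2}.
\end{align*}

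Next I would exploit two inequalities. Because $Ddr(\nabla r,\,\cdot\,)=\tfrac12 d|\nabla r|^{2}=0$, the Hessian of $r$ acts nontrivially only on the $(n-1)$-dimensional orthogonal complement of $\nabla r$ and has trace $\lap_{k}r=u-\theta$; Cauchy--Schwarz therefore gives $|Ddr|_{k}^{2}\ge (u-\theta)^{2}/(n-1)$. Combining this with the hypothesis $\nric\ge -\kappa^{2}(N+n-1)k$ yields
\begin{align*}
u'\le -\frac{(u-\theta)^{2}}{n-1}-\frac{\theta^{2}}{N}+\kappa^{2}(N+n-1).
\end{align*}
Minimizing the first two terms in $\theta$ (the minimum is attained at $\theta=Nu/(N+n-1)$) collapses them to $u^{2}/(N+n-1)$, producing the clean Riccati inequality
\begin{align*}
u'\le -\frac{u^{2}}{N+n-1}+\kappa^{2}(N+n-1).
\end{align*}

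The remainder is standard ODE comparison. The model function $\bar u(t)=(N+n-1)\kappa\coth(\kappa t)$ satisfies this inequality with equality and blows up like $(N+n-1)/t$ as $t\to 0^{+}$. I would check that $u(t)$ has the same leading asymptotic behavior near $p_{0}$: this is the statement that $\lap_{k}r\sim(n-1)/t$ near the pole (a purely Riemannian fact, from the Euclidean model in normal coordinates) while $\theta$ remains bounded, so that $u(t)-(N+n-1)/t$ is bounded as $t\to 0^{+}$, and in particular $u(t)\le\bar u(t)$ for $t$ sufficiently small. Standard Riccati comparison then propagates the inequality $u(t)\le\bar u(t)$ for all $t$ up to the cut locus. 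Multiplying by $r=t$ and using the elementary inequality $x\coth(x)\le 1+x$ for $x\ge 0$ gives the stated bound $r\blap_{k}r\le(N+n-1)\kappa r\coth(\kappa r)\le(N+n-1)(1+\kappa r)$.

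The main technical obstacle is the optimization step that turns the two nonnegative quadratic terms $(u-\theta)^{2}/(n-1)$ and $\theta^{2}/N$ into $u^{2}/(N+n-1)$: this is precisely where the dimension $N+n-1$ emerges naturally and where one must take care that the inequality obtained is sharp enough to match the model space. Everything else is a matter of being careful with Bochner in the mm-setting and a routine Riccati comparison against the constant-curvature model.
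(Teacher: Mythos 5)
The paper does not prove this theorem at all: it is quoted as the specialization of Theorem~4.2 of Bakry--Qian, with a pointer to Wei--Wylie for a more geometric exposition. Your argument is a correct proof, and it is in substance the standard one from the comparison-geometry literature: Bochner along a minimizing radial geodesic, the identity $u' = -|Ddr|_{k}^{2} - \nric_{ij}\nabla^{i}r\nabla^{j}r - \tfrac{1}{N}\theta^{2}$, the Cauchy--Schwarz bound $|Ddr|_{k}^{2}\geq (u-\theta)^{2}/(n-1)$, and the optimization in $\theta$ that merges the two quadratic terms into $u^{2}/(N+n-1)$ (your computation of the minimizer and of the resulting constant is right, and this is exactly where the effective dimension $N+n-1$ enters). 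The Riccati comparison against $\bar u = (N+n-1)\kappa\coth(\kappa t)$ and the elementary bound $x\coth x\leq 1+x$ then finish the proof.

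One small inaccuracy in the boundary analysis near the pole: since $\lap_{k}r\sim (n-1)/t$ and $\theta$ stays bounded, the quantity $u(t)-(N+n-1)/t$ is \emph{not} bounded as $t\to 0^{+}$; it behaves like $-N/t$ and tends to $-\infty$ (for $N>0$, which is the relevant case here, $N=n+1$ in the application). This is actually what you need: mere boundedness of $u-(N+n-1)/t$ would not yield $u\leq\bar u$ for small $t$, because $\bar u - (N+n-1)/t\to 0$; it is the strict divergence to $-\infty$ that guarantees $u<\bar u$ near $t=0$ and lets the Riccati comparison propagate. With that correction the argument is complete.
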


\noindent
Theorem \ref{cyestimatetheorem} is proved by the adaptation to the metric measure context of the argument used by Cheng and Yau to prove Theorem $2$ of their \cite{Cheng-Yau-maximalspacelike}. For the reader's convenience a full proof is given. 

\begin{theorem}\label{cyestimatetheorem}
Let $(M, k, \al)$ be a complete $n$-dimensional smooth local metric measure space. Suppose the associated $(N+n)$-dimensional Bakry Emery Ricci tensor $\nric_{ij}$ satisfies a lower bound of the form $\nric_{ij} \geq -\ka^{2}(N+n-1)g_{ij}$ for some real constant $\ka > 0$. Let $\blap_{k}$ be the mm-Laplacian defined by \eqref{blapdefined}. Suppose $u \in C^{2}(M)$ is nonnegative and not identically $0$ and that wherever $u$ is not $0$ it satisfies $\blap_{k}u \geq Bu^{1 + \si} - Au$ for some constants $B > 0$, $\si > 0$, and $A \in \rea$. Then for any $x \in M$ at which $u(x) \neq 0$, and any $a >0$, on the open ball $B(x, a)$ of radius $a$ centered at $x$ there holds 
\begin{align}\label{cyestimate}
u \leq (a^{2} - r^{2})^{-2/\si}\left|(\tfrac{A}{B})a^{4} + (\tfrac{4 \ka(N+n-1)}{B\si })a^{3} + (\tfrac{4((N+n+2)\si + 4)}{B\si^{2}})a^{2}\right|^{1/\si}
\end{align}
in which $r = d(x, \dum)$ is the $k$-distance from $x$. In particular, letting $a \to \infty$, there holds $\sup_{M}u \leq |A/B|^{1/\si}$.
\end{theorem}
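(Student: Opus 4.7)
The plan is to follow the classical Cheng-Yau maximum-principle argument on an auxiliary function of the form $(\text{cutoff})\times u^{\sigma}$, using the modified Laplacian $\blap_{k}$ in place of the ordinary Laplacian and the Bakry-Qian distance comparison (Theorem \ref{bqcomparisontheorem}) in place of the standard Riemannian one. Fix $x \in M$ with $u(x) > 0$ and $a > 0$, write $r = d(x, \dum)$, $\rho = a^{2} - r^{2}$, $w = u^{\sigma}$, and set $F = \rho^{2}w$ on $\bar{B}(x, a)$. Since $F$ is nonnegative and continuous, vanishes on $\partial B(x, a)$, and satisfies $F(x) = a^{4}u(x)^{\sigma} > 0$, it attains a strictly positive maximum at some interior point $x_{0}$, at which $u(x_{0}) > 0$.

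Assuming first that $x_{0} \notin \cut(x)$, so $F$ is $C^{2}$ near $x_{0}$, the plan is to exploit $dF(x_{0}) = 0$ together with $\blap_{k}F(x_{0}) \leq 0$. The vanishing of $dF$ gives $\rho\,dw = 4rw\,dr$ at $x_{0}$, and hence $k(d\rho^{2}, dw) = -16 r^{2}w$ and $|dw|_{k}^{2} = 16 r^{2}w^{2}/\rho^{2}$ there. Expressing $\blap_{k}w$ in terms of $\blap_{k}u$ through the chain rule for $w = u^{\sigma}$ and invoking the hypothesis $\blap_{k}u \geq Bu^{1+\sigma} - Au$ yields
\begin{align*}
\blap_{k}w \geq \sigma B w^{2} - \sigma A w + \tfrac{\sigma-1}{\sigma w}|dw|_{k}^{2}.
\end{align*}
Combining this with the product-rule identity $\blap_{k}F = \rho^{2}\blap_{k}w + w\blap_{k}\rho^{2} + 2k(d\rho^{2}, dw)$, the elementary computation $\blap_{k}\rho^{2} = 8r^{2} - 4\rho - 4\rho\,r\blap_{k}r$, and the Bakry-Qian bound $r\blap_{k}r \leq (N+n-1)(1+\ka r)$, substituting into $\blap_{k}F(x_{0}) \leq 0$ and dividing by $w > 0$ produces
\begin{align*}
\sigma B \rho^{2}w \leq \sigma A \rho^{2} + \tfrac{8(\sigma+2)}{\sigma}r^{2} + 4\rho + 4\rho(N+n-1)(1+\ka r)
\end{align*}
at $x_{0}$. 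Using $\rho \leq a^{2}$, $r \leq a$ and collecting by powers of $a$ delivers precisely the bracketed expression in \eqref{cyestimate} as an upper bound for $F(x_{0})$.

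To handle the case $x_{0} \in \cut(x)$ I would apply Calabi's upper-barrier trick: choose a point $x_{1}$ close to $x$ along a minimizing geodesic from $x$ to $x_{0}$ (so that $x_{0} \notin \cut(x_{1})$) and replace $r$ by $\tilde{r}(y) = d(x, x_{1}) + d(x_{1}, y)$, which dominates $r$, agrees with it at $x_{0}$, is smooth in a neighborhood of $x_{0}$, and inherits the same comparison bound because $\tilde{r} - d(x, x_{1})$ is the distance from $x_{1}$. The surrogate $\tilde{F} = (a^{2} - \tilde{r}^{2})^{2}w$ satisfies $\tilde{F} \leq F$ with equality at $x_{0}$, so $x_{0}$ is still a local maximum of $\tilde{F}$, and the smooth analysis applied to $\tilde{F}$ yields the same bound on $F(x_{0})$. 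For arbitrary $y \in B(x, a)$, the inequality $F(y) \leq F(x_{0})$ rearranges to \eqref{cyestimate} (the outer absolute values in the statement absorb the sign of $A$), and fixing $y = x$ and letting $a \to \infty$ gives $\sup_{M}u \leq |A/B|^{1/\sigma}$. The principal obstacle is the algebraic bookkeeping that produces the precise coefficients $A/B$, $4\ka(N+n-1)/(B\sigma)$, and $4((N+n+2)\sigma + 4)/(B\sigma^{2})$; this is mechanical but depends on using the Bakry-Qian bound in its linearized form $(N+n-1)(1+\ka r)$ so that the $a^{2}$, $a^{3}$, and $a^{4}$ powers separate cleanly upon substitution of $\rho \leq a^{2}$ and $r \leq a$.
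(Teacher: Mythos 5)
Your proposal is correct and follows essentially the same route as the paper: a Cheng--Yau cutoff function localized on $B(x,a)$, the maximum principle at an interior maximum, the Bakry--Qian bound $r\blap_{k}r \leq (N+n-1)(1+\ka r)$ in place of the Riemannian Laplacian comparison, and Calabi's upper-barrier trick at the cut locus. The only cosmetic difference is that you maximize $(a^{2}-r^{2})^{2}u^{\si}$ while the paper maximizes $(a^{2}-r^{2})^{2/\si}u$; these are $\si$-th powers of one another, so the critical points and the resulting coefficients coincide.
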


\begin{proof}
In this proof it is convenient to drop subscripts indicating dependence on $k$, writing $\lap$, $\blap$, $|\dum|^{2}$, etc. instead of $\lap_{k}$, $\blap_{k}$, $|\dum|^{2}_{k}$, etc. Suppose $u \in \cinf(M)$ is nonnegative and not identically zero, and choose $x$ so that $u(x) \neq 0$. Let $a > 0$ and $\al > 0$ and define $f = (a^{2} - r^{2})^{\al}u$ which is by assumption not identically zero on $B(x, a)$. Since $r$ is smooth on the complement of the cut locus $\cut(x)$ of $x$, $f$ is smooth on the complement of $\cut(x)$ in the ball $B(x, a)$, and there hold
\begin{align}
\label{cy1}Df & = \left(\tfrac{du}{u} - \tfrac{2\al r dr}{a^{2} - r^{2}} \right)f,\\
\label{cy2}
\begin{split}\tfrac{\lap f}{f} & = \left|\tfrac{du}{u} - \tfrac{2\al r dr}{a^{2} - r^{2}}\right|^{2} + \left(\tfrac{\lap u}{u} - \tfrac{|du|^{2}}{u^{2}} - \tfrac{2\al(r\lap r + 1)}{a^{2} - r^{2}} - \tfrac{4\al r^{2}}{(a^{2} - r^{2})^{2}} \right)\\
& = \left|\tfrac{du}{u} - \tfrac{2\al r dr}{a^{2} - r^{2}}\right|^{2} + \left(\tfrac{\blap u}{u} - \tfrac{|du|^{2}}{u^{2}} - \tfrac{2\al(r\blap r + 1)}{a^{2} - r^{2}} - \tfrac{4\al r^{2}}{(a^{2} - r^{2})^{2}} \right) + k\left(d\phi, \left(\tfrac{2\al r dr}{a^{2} - r^{2}} - \tfrac{du}{u}\right) \right).
\end{split}
\end{align}
Since by construction $f$ is not identically $0$ on $B(x, a)$, and vanishes on the boundary $\pr B(x, a)$, its restriction to the closure of $B(x, a)$ (which is compact, because $k$ is complete), attains its maximum at some $x_{0} \in B(x, a)$. First suppose $x_{0} \notin \cut(x)$. The proof in the case $x_{0} \in \cut(x)$ is similar, and will be indicated at the end. Since $f(x_{0}) \neq 0$, also $u(x_{0}) \neq 0$.  Since at $x_{0}$ there vanishes $df$, there holds $\blap f = \lap f$ at $x_{0}$. It follows from \eqref{cy1} that at $x_{0}$ there holds $\tfrac{du}{u} = \tfrac{2\al r dr}{a^{2} - r^{2}}$ and, as at $x_{0}$ there holds $0 \geq \lap f = \blap f$, in \eqref{cy2} this implies that at $x_{0}$ there holds
\begin{align}\label{cy3}
\tfrac{\blap u}{u}  \leq \tfrac{2\al(r\blap r + 1)}{a^{2} - r^{2}} + \tfrac{4\al (\al + 1)r^{2}}{(a^{2} - r^{2})^{2}}.
\end{align}
Since $B$ is positive, substituting \eqref{blapcomp} into \eqref{cy3} and rearranging the result shows that at $x_{0}$ there holds
\begin{align}\label{cy5}
u^{\si} \leq \tfrac{A}{B} + \tfrac{2\al(N+n + \ka(N+n-1)r)}{B(a^{2} - r^{2})} + \tfrac{4\al (\al + 1)r^{2}}{B(a^{2} - r^{2})^{2}}
\end{align}
Let $\al = 2/\si$ and multiply \eqref{cy5} by $(a^{2} - r^{2})^{2}$ to obtain that at $x_{0}$ there holds
\begin{align}\label{cy6}
\begin{split}
f^{\si} &\leq \tfrac{A}{B}(a^{2} - r^{2})^{2} + \tfrac{4(N+n + \ka(N+n-1)r)(a^{2} - r^{2})}{B\si} + \tfrac{8(2 + \si)r^{2}}{B\si^{2}} \\
&\leq (\tfrac{A}{B})a^{4} + (\tfrac{4 \ka(N+n-1)}{B\si})a^{3} + (\tfrac{4((N+ n + 2)\si  + 4)}{B\si^{2}} )a^{2},
\end{split}
\end{align}
which implies \eqref{cyestimate}. 
For the proof in the case $x_{0} \in \cut(x)$ the argument is modified using a device due to Calabi in \cite{Calabi-hopfmaximum}. For the reader's convenience this is recalled here following the end of the proof of the gradient estimate in \cite{Schoen-Yau}. There is a minimizing geodesic joining $x$ to $x_{0}$ the image $\si$ of which necessarily lies in $B(x, a)$. Let $\bar{x}$ be a point on $\si$ lying strictly between $x$ and $x_{0}$ at some distance $\ep> 0$ from $x$. Since $\si$ is minimizing, no point of $\si$ can be conjugate to $\bar{x}$. Were $x$ or $x_{0}$ conjugate to $\bar{x}$ then it would be in $\cut(\bar{x})$, and so $\bar{x}$ would be in its cut locus, which it is not because $x_{0} \in \cut(x)$ and $x \in \cut(x_{0})$. Thus no point of $\si$ is a conjugate point of $\bar{x}$ and hence there is some $\delta > 0$ for which there is an open $\delta$ neighborhood $N \subset B(x, a)$ of $\si$ containing no conjugate point of $\bar{x}$. Let $\bar{r} = d(\bar{x}, \dum)$. By the triangle inequality, $\bar{r} + \ep \geq r$. On the other hand $\bar{r}(x_{0}) + \ep = r(x_{0})$. Define $\bar{f} = (a^{2} - (\bar{r} + \ep)^{2})^{\al}u$. Then $\bar{f} \leq f$ on $N$ and $\bar{f}(x_{0}) = f(x_{0})$, so $\bar{f}$ attains its maximum value on $N$ at $x_{0}$. As $\bar{r}$ is smooth near $x_{0}$ the preceeding argument goes through with $\bar{f}$ in place of $f$, and letting $\ep \to 0$ at the end yields \eqref{cyestimate}.
\end{proof}

\subsection{}
Throughout this section $(\nabla, g)$ is a Riemannian signature K\"ahler affine metric on a smooth $(n+1)$-dimensional manifold $M$. 
The Levi-Civita connection $D$ of $g_{ij}$ is $D = \nabla + \tfrac{1}{2}F_{ij}\,^{k}$. The curvature tensor $R_{ijk}\,^{l}$ of the Levi-Civita connection $D$ of $g$ is defined by $2D_{[i}D_{j]}X^{k} = R_{ijp}\,^{k}X^{p}$. From $\nabla_{i}F_{jk}\,^{l} = F_{ijk}\,^{l} - F_{pi}\,^{l}F_{jk}\,^{p}$ it follows that the Riemann curvature $R_{ijkl} = R_{ijk}\,^{p}g_{pk}$, the Ricci tensor $R_{ij} = R_{pij}\,^{p}$, and the scalar curvature $R_{g} = g^{ij}R_{ij}$ of $g$ have the forms:
\begin{align}\label{hessiancurvature}
\begin{split}
&R_{ijkl} = g_{lp}\nabla_{[i}F_{j]k}\,^{p} + \tfrac{1}{2}F_{pl[i}F_{j]k}\,^{p} = -\tfrac{1}{2}F_{pl[i}F_{j]k}\,^{p},\\
R_{ij}  &= \tfrac{1}{4}\left( F_{ip}\,^{q}F_{jq}\,^{p} - F_{ij}\,^{p}H_{p}\right),\qquad
R_{g}  = \tfrac{1}{4}\left(|\nabla \nabla dF|_{g}^{2} - |H|_{g}^{2}\right).
\end{split}
\end{align}
As explained in the introduction, the K\"ahler affine metric $(\nabla, g)$ is identified with the local mm-space $(g, H)$ determined by $g$ in conjunction with the Koszul form $H_{i}$. In particular a Hessian metric $g_{ij} = \nabla_{i}dF_{j}$, with global potential $F \in \cinf(M)$, is identified with the mm-space $(g_{ij}, \H(F)\Psi) = (g_{ij}, \H(F)^{1/2}d\vol_{g})$. The associated modified Laplacian is $\blap_{g} = \lap_{g} + \tfrac{1}{2}H^{i}D_{i}$.

If $A \in C^{2}(\Omega)$ then $A_{ij} = D_{i}A_{j} + \tfrac{1}{2}F_{ij}\,^{p}A_{p}$ and so $\blap_{g}A = A_{p}\,^{p}$. In particular, $D_{i}F_{j} = g_{ij} - \tfrac{1}{2}F_{ij}\,^{p}F_{p}$ and $\blap_{g}F = (n+1)$. By \eqref{bedefined} and \eqref{hessiancurvature} the Bakry-Emery Ricci tensors $\bric_{ij}$ and $\nric_{ij}$ are given by
\begin{align}
\label{nricdefined}
\begin{split}
\bric_{ij} &=  R_{ij} - \tfrac{1}{2}D_{i}H_{j} = R_{ij} + \tfrac{1}{2}\ak_{ij} + \tfrac{1}{4}F_{ij}\,^{p}H_{p}= \tfrac{1}{4}F_{ip}\,^{q}F_{jq}\,^{p} + \tfrac{1}{2}\ak_{ij} \geq  \tfrac{1}{2}\ak_{ij},\\
\nric_{ij} & = R_{ij} - \tfrac{1}{2}D_{i}H_{j} - \tfrac{1}{4N}H_{i}H_{j}= \tfrac{1}{4}F_{ip}\,^{q}F_{jq}\,^{p} - \tfrac{1}{4N}H_{i}H_{j} + \tfrac{1}{2}\ak_{ij}.
\end{split}
\end{align}
In particular, a lower bound on the K\"ahler affine Ricci tensor $\ak_{ij}$ implies a lower bound on the $\infty$-Ricci tensor. Note also that the K\"ahler affine scalar curvature $\aks$ is given by
\begin{align}\label{aks}
\aks = g^{ij}\ak_{ij} = -\blap_{g}\log\H(F) = -D^{p}H_{p} - \tfrac{1}{2}|H|^{2}_{g}.
\end{align}
The nonnegativity of the norm of $(n+1)X^{p}F_{pij} - X^{p}H_{p}g_{ij}$ for any $X^{i}$ implies
\begin{align}\label{nr1}
(n+1)F_{ip}\,^{q}F_{jq}\,^{p} \geq H_{i}H_{j}.
\end{align}
Together \eqref{nr1} and \eqref{nricdefined} show that the $2(n+1)$-dimensional Bakry-Emery Ricci tensor is bounded from below by the K\"ahler affine Ricci tensor:
\begin{align}\label{cd1}
R(n+1)_{ij} \geq \tfrac{1}{2}\ak_{ij}.
\end{align}
When the potential $F$ of a Hessian metric satisfies $\H(F) = e^{2F}$, the inequality \eqref{cd1} yields the condition $R(n+1)_{ij} \geq -g_{ij}$, which is called a \textit{curvature-dimension inequality} $CD(-1, 2(n+1))$ (see e.g. \cite{Bakry-Qian-volume}). The $2(n+1)$ is the real dimension of the tube domain $\tube_{\Omega}$ over $\Omega$, reflecting that the inequality \eqref{cd1} is inherited from the lower bound on the Ricci curvature of the K\"ahler metric determined on $\tube_{\Omega}$ by $F$. Note that the lower bound \eqref{cd1} is stronger than the lower bound of $\bric_{ij}$ given in \eqref{nricdefined}.

\begin{theorem}\label{bqhessiantheorem}
Let $\nabla$ be a flat affine connection on the $(n+1)$-dimensional manifold $M$ and let $g_{ij}$ be a complete Riemannian metric forming with $\nabla$ a K\"ahler affine structure with K\"ahler affine Ricci curvature bounded from below by $-2Ag_{ij}$ for some positive constant $A$. Let $p_{0} \in \Omega$ and let $r(p)$ be the $g$-distance from $p$ to $p_{0}$. Let $\blap_{g} = \lap_{g} + \tfrac{1}{2}H^{i}D_{i}$ be the mm-Laplacian associated to the smooth local mm-space $(g_{ij}, \tfrac{1}{2}H_{i})$. For $p \in M\setminus\cut(p_{0})$ there holds
\begin{align}
r\blap_{g}r \leq \sqrt{A(2n+1)}r\coth(r\sqrt{\tfrac{A}{2n+1}}) \leq 2n+1 + r\sqrt{A(2n+1)}.
\end{align}
\end{theorem}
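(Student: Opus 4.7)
The plan is to reduce Theorem \ref{bqhessiantheorem} to a direct specialization of the Bakry--Qian comparison theorem (Theorem \ref{bqcomparisontheorem}) applied to the local mm-structure $(g_{ij}, \tfrac{1}{2}H_{i})$ associated to the K\"ahler affine metric. The first step is simply to recognize that for this choice of $\al_{i} = \tfrac{1}{2}H_{i}$ the mm-Laplacian defined by \eqref{blapdefined} is exactly the operator $\blap_{g} = \lap_{g} + \tfrac{1}{2}H^{i}D_{i}$ that appears in the statement, and to identify the manifold dimension parameter (called $n$ in Theorem \ref{bqcomparisontheorem}) with our $n+1$.

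The second step is the choice of the Bakry--Emery parameter $N$. Take $N = n+1$, so that the relevant Bakry--Emery Ricci tensor is $R(n+1)_{ij}$ as computed in \eqref{nricdefined}. Then the key input is the already-established lower bound \eqref{cd1}, namely $R(n+1)_{ij} \geq \tfrac{1}{2}\ak_{ij}$, which combined with the standing hypothesis $\ak_{ij} \geq -2A g_{ij}$ yields
\begin{align*}
R(n+1)_{ij} \geq -A g_{ij}.
\end{align*}
With the identifications above, the dimensional constant in Theorem \ref{bqcomparisontheorem} becomes $N + n - 1 = (n+1) + (n+1) - 1 = 2n+1$, so the hypothesis of that theorem is met with $\ka^{2}(2n+1) = A$, i.e.\ $\ka = \sqrt{A/(2n+1)}$.

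The third step is just to plug these values into the Bakry--Qian conclusion $r\blap_{g} r \leq (N+n-1)\ka r \coth(\ka r)$, which gives the first inequality
\begin{align*}
r\blap_{g} r \leq \sqrt{A(2n+1)}\, r \coth\!\bigl(r\sqrt{A/(2n+1)}\bigr).
\end{align*}
The second inequality in the statement follows from the elementary estimate $x\coth(x) \leq 1 + x$ for $x > 0$, applied with $x = r\sqrt{A/(2n+1)}$ and the whole inequality multiplied by $2n+1$.

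There is really no main obstacle: the work is entirely in setting up the correct dictionary between the K\"ahler affine data and the general Bakry--Emery formalism, together with invoking the curvature--dimension inequality \eqref{cd1}. The only place where one might be tempted to go astray is in the dimensional bookkeeping, where it matters that the effective dimension in Bakry--Qian is $N + n = 2(n+1)$; this is precisely the real dimension of the tube domain $\tube_{M}$ over $M$, which is conceptually satisfying since \eqref{cd1} is nothing other than the infinitesimal reflection of the fact that the K\"ahler affine metric lifts to an honest K\"ahler metric on a $2(n+1)$-dimensional space.
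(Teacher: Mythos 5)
Your proposal is correct and is essentially identical to the paper's proof: both invoke the curvature--dimension inequality \eqref{cd1} to get $R(n+1)_{ij}\geq \tfrac{1}{2}\ak_{ij}\geq -Ag_{ij}$ and then specialize Theorem \ref{bqcomparisontheorem} with $N=n+1$ and the manifold dimension equal to $n+1$, so that $N+n-1$ becomes $2n+1$ and $\ka=\sqrt{A/(2n+1)}$. Your dimensional bookkeeping and the elementary estimate $x\coth x\leq 1+x$ for the second inequality are both right.
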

\begin{proof}
Equation \eqref{cd1} shows that the $2(n+1)$-dimensional Bakry Emery Ricci tensor $R(n+1)_{ij}$ associated to $\blap_{g}$ satisfies the lower bound $R(n+1)_{ij} \geq -Ag_{ij}$. The claim follows by specializing Theorem \ref{bqcomparisontheorem} with $N = n+1$ (and $n+1$ in place of $n$).
\end{proof}

For a symmetric tensor $\si_{ij} = \si_{(ij)}$ the nonnegativity of its trace-free part implies $(n+1)\si_{ij}\si^{ij} \geq (\si_{p}\,^{p})^{2}$. Applying this inequality to $A_{ij} = X^{p}F_{pij} - \be H_{(i}X_{j)}$, where $\be \in \rea$ and $X^{i}$ is a vector field on $\Omega$, and noting $A_{p}\,^{p} = (1-\be)X^{p}H_{p}$ yields 
\begin{align}\label{xxric}
\begin{split} 
 X^{i}X^{j}\left( F_{ip}\,^{q}F_{jq}\,^{p} - 2\be F_{ij}\,^{p}H_{p}\right)&  = A^{ij}A_{ij} - \tfrac{1}{2}\be^{2}|X|_{g}^{2}|H|^{2}_{g} - \tfrac{1}{2}\be^{2}(X^{p}X_{p})^{2}\\
&\geq -\tfrac{((n-1)\be^{2} + 4\be - 2)}{2(n+1)}\left(X^{p}H_{p}\right)^{2}- \tfrac{1}{2}\be^{2}|X|_{g}^{2}|H|^{2}_{g}.
\end{split}
\end{align}
Taking $\be = 1/2$ in \eqref{xxric} and comparing with \eqref{hessiancurvature} shows that 
\begin{align}\label{xxric3}
\begin{split}
R_{ij} &  \geq \tfrac{1-n}{32(n+1)}H_{i}H_{j} - \tfrac{1}{32}|H|^{2}_{g}g_{ij}\geq -\tfrac{n}{16(n+1)}|H|_{g}^{2}g_{ij}.
\end{split}
\end{align}
From \eqref{xxric3} it follows that an upper bound on $|H|^{2}_{g}$ suffices to bound the Ricci curvature from below. However, a bound on $|H|^{2}_{g}$ is the sort of thing one wishes to conclude rather than to assume. In this regard, the aspect of Theorem \ref{bqhessiantheorem} that is important here is that it is true even if $|H|_{g}^{2}$ is not assumed bounded, in particular without an a priori lower bound on the ordinary Ricci curvature of $g$.

\subsection{}\label{schwarzlemmasection}
 Theorem \ref{schwarztheorem}, the Schwarz lemma for K\"ahler affine metrics, is proved now. 

\begin{proof}[Proof of Theorem \ref{schwarztheorem}]
The function $u = (\om/vol_{g})^{2}$ is smooth and positive. In an open neighborhood of any $p \in M$ there can be chosen a smooth $\nabla$-parallel volume form $\mu$. There is a positive smooth function $V$ such that $\om^{2} = V\mu^{2}$. Restricting to a smaller open neighborhood of $p$ if necessary, choose a potential $F$ for $g$ and write $\vol_{g}^{2} = \det \nabla dF = \H(F)\mu^{2}$. Hence $u$ coincides with $V/\H(F)$ where the latter is defined. The hypothesis that $\det \nabla\nabla \log \om^{2} \geq B\om^{2}$ is equivalent to $\H(\log V) \geq BV$. By hypothesis and \eqref{aks}, $- \blap_{g}\log\H(F) = \aks \geq - A(n+1)$. By the inequality of the arithmetic and geometric means and the hypotheses on $F$ and $G$, 
\begin{align}\label{blapratio}
\begin{split}
\blap_{g}u & = \blap_{g}(\log V - \log\H(F))  \geq \blap_{g}\log V - A(n+1) = g^{ij}(\log V)_{ij} - A(n+1)\\
& \geq (n+1)\left(\H(\log V)/\H(F))^{1/(n+1)} - A\right)  \\
&\geq (n+1)\left((BV/\H(F))^{1/(n+1)} - A\right) = (n+1)\left((Bu)^{1/(n+1)} - A\right).
\end{split}
\end{align}
Since $p\in M$ was arbitrary, the conclusion of \eqref{blapratio} is valid on all of $M$. 

Let $x_{0} \in M$. Let $r$ be the distance from $x_{0}$ in the metric $g$, which is smooth on the complement of the cut locus $\cut(x_{0})$ of $x_{0}$. Let $B(x_{0}, a)$ be the open geodesic ball of radius $a$ centered on $x_{0}$. Let $a > 0$ and $\be > 0$ and define $v = (a^{2} - r^{2})^{\be}u$. On the complement $B(x_{0}, a) \setminus \cut(x_{0})$ in $B(x_{0}, a)$ of $\cut(x_{0})$ there hold
\begin{align}
 dv & = v\left(d\log u - 2\be r(a^{2} - r^{2})^{-1}dr\right),\\
\label{blapv} 
\begin{split}
\blap_{g}v & = v\left(\left|d\log u - \tfrac{2\be rdr}{a^{2} - r^{2}}\right|^{2} + \blap_{g} \log u  - 2\be\left( \tfrac{a^{2} + r^{2}}{(a^{2} - r^{2})^{2}} + \tfrac{r\blap_{g}r}{a^{2} - r^{2}}\right)\right).
\end{split}
\end{align}
Since by assumption $g$ is complete, the closed ball $\bar{B}(x_{0}, a)$ is compact. Since $v$ is not identically zero on $B(x_{0}, a)$ and vanishes on the boundary $\pr B(x_{0}, a)$, the restriction of $v$ to the closure $\bar{B}(x_{0}, a)$ attains its maximum at some point $p \in B(x_{0}, a)$. Suppose that $p\notin \cut(x_{0})$. The proof in the case $p \in\cut(x_{0})$ is similar, and is described at the end. At $p$ there hold $dv(p) = 0$ and $\blap_{g}v = \lap_{g}v(p) \leq 0$. In particular, by \eqref{blapv}, at $p$ there holds
\begin{align}\label{blv1}
 \left(Bu\right)^{1/(n+1)} \leq A + \tfrac{2\be}{n+1}\left( \tfrac{a^{2} + r^{2}}{(a^{2} - r^{2})^{2}} + \tfrac{r\blap_{g}r}{a^{2} - r^{2}} \right).
\end{align}
Because by assumption the K\"ahler affine Ricci tensor is bounded from below by a multiple of $g_{ij}$, it follows from Theorem \ref{bqhessiantheorem} that there is a constant $c$ such that $r\lap_{g}r \leq 2n +1 + cr$. In \eqref{blv1} this shows that at $p$ there holds
\begin{align}\label{blv2}
 \left(Bu\right)^{1/(n+1)} \leq A + \tfrac{2\be}{n+1}\left( \tfrac{a^{2} + r^{2}}{(a^{2} - r^{2})^{2}} + \tfrac{2n+1 + cr}{a^{2} - r^{2}} \right).
\end{align}
Set $\be = 2(n+1)$ and multiply both sides by $(a^{2} - r^{2})^{2}$ to obtain
\begin{align}
\begin{split}
(\sup_{B(x_{0}, a)}Bv)^{1/(n+1)} & \leq A(a^{2} - r^{2})^{2} + 4(2n + 1 + cr))(a^{2} - r^{2}) + 4(a^{2} + r^{2}) \\
& \leq Aa^{4} + 4ca^{3} + 8(n+2)a^{2}.
\end{split}
\end{align}
Hence, when $p$ is not in $\cut(x_{0})$, there holds on $B(x_{0}, a)$ the inequality
\begin{align}\label{evf}
Bu \leq  (a^{2} - r^{2})^{-2(n+1)}\left(Aa^{4} + 4ca^{3} + 8(n+2)a^{2}\right)^{n+1}.
\end{align}
Supposing \eqref{evf} proved also for $p \in \cut(x_{0})$, letting $a \to \infty$ in \eqref{evf} shows $u \leq A^{n+1}/B$ on $\Omega$.
The proof of \eqref{evf} in the case $p\in \cut(x_{0})$ is accomplished using the same trick from \cite{Calabi-hopfmaximum} as at the end of the proof of Theorem \ref{cyestimatetheorem}. Namely, on a minimizing geodesic joining $x_{0}$ to $p$ there is a point $\bar{x}_{0}$ lying strictly between $x_{0}$ and $p$, at some distance $\ep> 0$ from $x_{0}$, and there can be used $\bar{r} = d(\bar{x}_{0}, \dum)$ in place of $r$. By the triangle inequality, $\bar{r} + \ep \geq r$. On the other hand $\bar{r}(p) + \ep = r(p)$. Define $\bar{v} = (a^{2} - (\bar{r} + \ep)^{2})^{2(n+1)}u$. Then $\bar{v} \leq v$ on $N$ and $\bar{v}(p) = v(p)$, so $\bar{v}$ attains its maximum value on $N$ at $p$. As $\bar{r}$ is smooth near $p$ the preceeding argument goes through with $\bar{v}$ in place of $v$, and letting $\ep \to 0$ at the end yields \eqref{evf}.
\end{proof}

\section{The canonical potential of a proper convex cone}

\subsection{}\label{cysection}
This section begins with some preliminary material needed for the statement and proof of Theorem \ref{affinespheretheorem}. 

Let $\ste$ be an $(n+1)$-dimensional vector space, $\std$ its dual, and write $\projp(\ste)$ and $\projp(\std)$ for their oriented projectivizations (the associated projective spheres). A subset of $\projp(\ste)$ is \textbf{convex} if its intersection with every projective line is a connected interval (possibly a point, or empty), and is \textbf{proper} if it contains no pair of antipodal points. The \textbf{cone} $\cone(S)$ over a subset $S \subset \projp(\ste)$ is the pre-image of $S$ under the defining projection $\pi:\stez \to \projp(\ste)$. Given a proper open convex cone $\Omega \subset \ste$ and its dual $\Omega^{\ast} \subset \std$, let $\projp(\Omega)$ and $\projp(\Omega^{\ast})$ be their oriented projectivizations, which are properly convex open subsets of $\projp(\ste)$ and $\projp(\std)$, respectively. Clearly $\cone(\projp(\Omega)) = \Omega$, and $\projp(\cone(S)) = S$. 

That the canonical potential of the \textbf{standard orthant} $\orthant = \{x \in \rea^{n+1}: x^{s} > 0, 1 \leq s \leq n+1\}$ is $u(x) = -\sum_{i = 0}^{n}\log x^{i}$, and the associated complete Hessian metric is flat, are verified by straightforward computations. The image $ = A^{-1}\orthant$ of $\orthant$ under the inverse of $A_{i}\,^{j} \in GL(\rea^{n+1})$ is $\orthant_{A} = \{x \in \rea^{n+1}: \ell^{s}_{A}(x) > 0, 1 \leq s \leq n+1\}$ where $\ell^{i}_{A}(x) = A_{p}\,^{i}x^{p}$. That $u_{A}(x) = -\sum_{s = 1}^{n+1}\log\ell^{s}_{A}(x) + \log |\det A| = A^{-1}\cdot u + \log|\det A|$ is the canonical potential of $\orthant_{A}$ follows from the proof of Theorem \ref{cytheorem}. 

If $\Omega\subset \rea^{n+1}$ is a proper open convex cone, its projectivization $\projp(\Om)$ is a bounded convex domain, and so there are simplices $\Sigma$ and $\Sigma^{\prime}$ such that $\Sigma \subset \projp(\Om) \subset \Sigma^{\prime}$. The cones $\cone(\Sigma)$ and $\cone(\Sigma^{\prime})$ over these simplices have the forms $\orthant_{A}$ and $\orthant_{B}$ for some $A, B \in GL(\rea^{n+1})$, and so $\orthant_{A} \subset \Omega \subset \orthant_{B}$. It follows from Corollary \ref{gfcorollary} that $u_{B} \leq u_{A}$ on $\orthant_{B}$. Consequently the functions
\begin{align}\label{uomega}
\begin{split}
U_{\Om}(x) &= \sup\{u_{A}(x): \orthant_{A} \subset \Om\},\\
U^{\Om}(x) &= \sup\{G(x): G \,\, \text{convex in} \,\,\Omega, \,\,\text{and} \\
&\qquad G \leq u_{A} \,\,\text{for all}\,\, A \in GL(n+1, \rea) \,\,\text{such that}\,\, \Om \subset \orthant_{A}\}
\end{split}
\end{align}
are finite on the interior of $\Om$ and satisfy $U_{\Om}(x) \leq U^{\Om}(x)$ for $x \in \Om$. Since a supremum of convex functions is convex, both $U_{\Om}$ and $U^{\Om}$ are convex. Let $y$ be any point in the boundary of $\Omega$ and let $[y]$ be its image in $\projp(\Omega)$. There can be chosen simplices $\Sigma$ and $\Sigma^{\prime}$ such that $\Sigma \subset \projp(\Om) \subset \Sigma^{\prime}$ and so that $[y]$ lies in their boundaries, in which case the entire ray spanned by $y$ is contained in the boundaries of $\cone(\Sigma) = \orthant_{A}$ and $\cone(\Sigma^{\prime}) = \orthant_{B}$. It follows that $U_{\Om}(x_{i}) \to \infty$ and $U^{\Om}(x_{i})\to \infty$ as $x_{i} \in \Om$ tends to a point a of the boundary of $\Om$.

Suppose for the moment that the existence of the canonical potential of $\Om$ is not known. Since $\Om$ is proper, the set $\con(\Om)$ of $C^{2}$ convex functions $G$ on $\Om$ that are $-(n+1)$-logarithmically homogeneous and satisfy $\H(G) \geq e^{2G}$ on $\Om$ is nonempty for it contains the restriction to $\Om$ of $u_{A}$ for any $A \in GL(\rea^{n+1})$ such that $\Q \subset \orthant_{A}$. If $x_{0} \in \Om$ there are $A, B \in GL(\rea^{n+1})$ such that $x_{0} \in \orthant_{A} \cap \orthant_{B}$ and $\orthant_{A} \subset \Om \subset \orthant_{B}$. If $G \in \con(\Om)$ it follows from Corollary \ref{gfcorollary} applied to $G$ and $u_{A}$ on $\orthant_{A}$ that $G(x_{0}) \leq u_{A}(x_{0})$, and from Corollary \ref{gfcorollary} applied to $G$ and $u_{B}$ on $\Om$ that $u_{B}(x_{0}) \leq G(x_{0})$. It follows that $V(x) = \sup_{G \in \con(\Om)}G(x)$ is finite for all $x \in \Om$. Since a supremum of convex functions is convex, $V$ is a convex function on $\Om$. It is apparent that $V(x)$ is $-(n+1)$-logarithmically homogeneous. It follows from the preceeding that $U_{\Om} \leq V \leq U^{\Om}$. Note that it follows that $U(x_{i}) \to \infty$ as $x_{i} \in \Om$ tend to a point of the boundary of $\Om$. 

A ray $L$ is said to be an \textbf{asymptotic ray} of a hyperbolic affine sphere $\Sigma$ if it does not intersect $\Sigma$ and there is an unbounded sequence $\{x_{n}\}$ of points of $\Sigma$ such that the rays from the center of $\Sigma$ to $x_{n}$ converge to $L$. This implies that the image of $L$ in $\projp(\rea^{n+1})$ is contained in the boundary of $\projp(\Sigma)$. The affine sphere $\Sigma$ is said to be \textbf{asymptotic} to the boundary $\pr \Om$ of the cone $\Om$ if $\pr \Omega$ is a union of asymptotic rays of $\Sigma$. 

\begin{theorem}\label{affinespheretheorem}
Let $\Omega \subset \rea^{n+1}$ be a proper open convex cone with canonical potential $F$. Then: 
\begin{enumerate}
\item \label{cysupcone}
For all $x \in \Omega$,
\begin{align}
F(x) = \sup\{G(x): G \in C^{2}(\Omega) \cap \lmg_{-n-1}(\Om), G \,\, \text{convex in} \,\,\Omega, \,\,\text{and}\,\, \H(G) \geq e^{2G}\}.
\end{align}
and $U_{\Om}(x) \leq F(x) \leq U^{\Om}(x)$ for the functions $U_{\Om}$ and $U^{\Om}$ defined in \eqref{uomega}. 
\item \label{cylog} $F$ is $-(n+1)$-logarithmically homogeneous and is auto-harmonic in the sense that $\lap_{g}F = 0$.
\item \label{cylevel} The $r$-level set of $F$ is a complete hyperbolic affine sphere asymptotic to the boundary of $\Omega$, centered on the vertex of $\Omega$, and having affine mean curvature $-(n+1)^{-(n+1)/(n+2)}e^{2r/(n+1)}$.
\item\label{loftinlemma} [J. Loftin, \cite{Loftin-affinekahler}]
Let $h$ be the equiaffine metric on $\lc_{0}(F, \Omega)$ and equip $\reap \times \lc_{0}(F, \Omega)$ with the metric $k = (n+1)\left(r^{-2}dr^{2} + (n+1)^{-(n+1)/(n+2)}h\right))$. The maps 
\begin{align}
\begin{split}
x &\in \Omega \to (e^{-F(x)/(n+1)}, e^{F(x)/(n+1)}x) \in \reap \times \lc_{0}(F, \Omega),\\
(r, y) &\in \reap \times \lc_{0}(F, \Omega) \to ry \in \Omega,
\end{split}
\end{align}
are inverse isometries between the Riemannian manifolds $(\Omega, g)$ and $(\reap \times \lc_{0}(F, \Omega), k)$. Moreover, $\lc_{r}(F, \Omega)$ corresponds to $\{e^{-2r/(n+1)} \} \times \lc_{0}(F, \Omega) \subset \reap \times \lc_{0}(F, \Omega)$.
\item \label{cyfol} Any complete hyperbolic affine sphere with center at the vertex of $\Om$ and asymptotic to $\pr \Om$ is a level set of $F$.
\end{enumerate}
\end{theorem}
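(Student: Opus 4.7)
The plan is to establish \eqref{cysupcone}--\eqref{cyfol} in order, leveraging Theorems \ref{cytheorem} and \ref{ahtheorem} together with Corollary \ref{gfcorollary}. Write $F = F_{\Om}$. For \eqref{cylog}, since $\Om$ is a cone, every radial dilation $g_{t}(x) = e^{t}x$ lies in $\Aut(\Om)$, so \eqref{cyhom} of Theorem \ref{cytheorem} applied with $\det\ell(g_{t}) = e^{(n+1)t}$ gives $F(e^{t}x) = F(x) - (n+1)t$, whence $F \in \lmg_{-(n+1)}(\Om)$. Auto-harmonicity then follows from combining the universal identity $\blap_{g}F = n+1$ (a consequence of $g_{ij} = F_{ij}$) with $H_{i} = 2F_{i}$ (forced by $\H(F) = e^{2F}$) and $|dF|^{2}_{g} = n+1$ (from \eqref{hompol} with $\al = -(n+1)$), yielding $\lap_{g}F = \blap_{g}F - \tfrac{1}{2}H^{p}F_{p} = 0$. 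The supremum characterization in \eqref{cysupcone} is the restriction of \eqref{cysup} to the subclass of $-(n+1)$-log-homogeneous competitors; it still attains the value $F$ because $F$ itself is such a competitor by \eqref{cylog}. The bracketing $U_{\Om} \leq F \leq U^{\Om}$ then follows by applying Corollary \ref{gfcorollary} on each of the orthants entering the definitions \eqref{uomega}, compared in each case with $F$.

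Part \eqref{cylevel} is essentially the direction $\eqref{aht2}\Rightarrow\eqref{aht1}$ of Theorem \ref{ahtheorem} applied with $\al = -(n+1)$ and $\phi(r) = e^{2r}$; the affine-sphere structure and mean curvature follow directly, with the numerical value supplied by \eqref{ahmc}. Connectedness of each $\lc_{r}(F,\Om)$ follows from strict convexity of $F$, and asymptoticity to $\pr\Om$ from $F \to +\infty$ at $\pr\Om$ (Theorem \ref{cytheorem}). The geometric heart of the proof is \eqref{loftinlemma}. By \eqref{hompol} the $g$-gradient of $F$ equals $-\eul^{i}$ and $|dF|^{2}_{g} = n+1$, so the Euler field is $g$-orthogonal to every level set and $g$ splits orthogonally. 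The radial piece is computed using the $(-2)$-homogeneity of $F_{ij}$ inherited from the log-homogeneity of $F$, giving $(n+1)r^{-2}dr^{2}$; the level-set piece is identified with a scalar multiple of the equiaffine metric through \eqref{sffdefined}, the constant $(n+1)^{1/(n+2)}$ arising from $\H(F) = 1$ and $|dF|^{2}_{g} = n+1$ at level zero. Completeness of the equiaffine metric on $\lc_{0}(F,\Om)$ then follows because the complete metric $g$ is a Riemannian product with the complete half-line $(\reap, (n+1)r^{-2}dr^{2})$, which in particular finishes the outstanding completeness assertion in \eqref{cylevel}.

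For \eqref{cyfol}, let $\Sigma$ be a complete hyperbolic affine sphere centered at the vertex of $\Om$ and asymptotic to $\pr\Om$. Since $\Sigma$ is transverse to the radial directions and meets each ray from the vertex in exactly one point (asymptoticity plus properness of $\Om$), there is a unique $F_{\Sigma} \in \lmg_{-(n+1)}(\Om) \cap \cinf(\Om)$ with $\Sigma = F_{\Sigma}^{-1}(0)$. Every level set of $F_{\Sigma}$ is then a homothetic image of $\Sigma$, hence itself a hyperbolic affine sphere centered at the vertex, so $\eqref{aht1}\Rightarrow\eqref{aht2}$ of Theorem \ref{ahtheorem} yields $\H(F_{\Sigma}) = Be^{2F_{\Sigma}}$ for some $B > 0$; adding an appropriate additive constant to $F_{\Sigma}$ (which preserves log-homogeneity while shifting which level set is $\Sigma$) produces a function, still denoted $F_{\Sigma}$, solving $\H(F_{\Sigma}) = e^{2F_{\Sigma}}$. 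The splitting argument of \eqref{loftinlemma} uses only log-homogeneity and the Monge-Amp\`ere equation, so it applies verbatim to $F_{\Sigma}$; completeness of $\Sigma$ therefore yields completeness of the Hessian metric of $F_{\Sigma}$, and the uniqueness statement of Corollary \ref{gfcorollary} forces $F_{\Sigma} = F$, exhibiting $\Sigma$ as a level set of $F$.

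The main obstacle will be \eqref{loftinlemma}: although every identity assembling the splitting is a direct consequence of material already established, they must be combined in a coordinate-free way that simultaneously yields the orthogonal decomposition, the conformal factor $(n+1)^{1/(n+2)}$ on the level-set factor, and the correct rescaling on the radial factor. This splitting must then be portable to $F_{\Sigma}$ in the proof of \eqref{cyfol}, where completeness of the associated Hessian metric is needed before the identity $F_{\Sigma} = F$ is known.
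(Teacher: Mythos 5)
Your proposal follows the paper's own route essentially step for step: logarithmic homogeneity via \eqref{cyhom} applied to dilations, the affine-sphere structure and mean curvature of the level sets via Theorem \ref{ahtheorem}, the product splitting \eqref{loftinlemma} from $F^{i} = -\eul^{i}$ and $|dF|^{2}_{g} = n+1$ (with completeness of the equiaffine metric read off from completeness of $g$), and the two-sided application of the Schwarz lemma to the log-homogeneous potential built from $\Sigma$ in \eqref{cyfol}. The only genuinely different detail is your computation of $\lap_{g}F = 0$ from $\blap_{g}F = n+1$, $H_{i} = 2F_{i}$ and $|dF|^{2}_{g} = n+1$, where the paper instead observes $D_{i}F_{j} = 0$ (so that $\eul$ is Killing); both are correct.

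The one place where your argument is too thin to stand on its own is the first step of \eqref{cyfol}: the assertion that ``asymptoticity plus properness of $\Om$'' implies $\Sigma$ meets each ray from the vertex in exactly one point, so that $F_{\Sigma}$ is a well-defined smooth element of $\lmg_{-(n+1)}(\Om)$. A priori an affine-complete hypersurface need not be properly embedded, need not bound a convex region, and need not be radially graphical, and asymptoticity of its boundary rays does not by itself rule these failures out. The paper supplies the missing input: affine completeness implies Euclidean completeness (Theorem $3$ of \cite{Cheng-Yau-affinehyperspheresI}, or Theorem A of \cite{Trudinger-Wang-affinecomplete}), and a Euclidean complete locally uniformly convex hypersurface lies on the boundary of a convex region (\cite{Vanheijenoort}, \cite{Sacksteder}); only then does one get that the dilates $r\Sigma$ are pairwise disjoint graphs whose union is $\Om$, which is what makes $F_{\Sigma}$ (the paper's $G = -(n+1)\log r$ on $r\Sigma$) well defined and smooth. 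The same circle of results is what the paper uses to prove the asymptotic-to-$\pr\Om$ claim in \eqref{cylevel}, which you dispatch a little quickly with ``$F \to +\infty$ at $\pr\Om$''; for the level sets of $F$ that fact does drive the argument, but one still needs proper embeddedness and the compactness of the space of rays to produce the asymptotic rays. With those citations inserted, your proof is complete and coincides with the paper's.
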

Modulo notation and terminology, \eqref{loftinlemma} of Theorem \ref{affinespheretheorem} is Theorem $3$ of \cite{Loftin-affinekahler}. The result for the usual characteristic function $\phi_{\Omega}$ of $\Omega$ analogous to \eqref{loftinlemma} is given in Proposition $11$ of \cite{Sasaki-characteristicfunctions} (see also \cite{Sasaki} and \cite{Tsuji}). 

\begin{proof}
Claim \eqref{cysupcone} was proved in the discussion preceeding the statement of the theorem. The logarithmic homogeneity of $F$ follows from \eqref{cyhom} of Theorem \ref{cytheorem} applied to the dilations preserving the cone. In particular, for $t \in \rea$  the function $(t\cdot F)(x) = F(e^{-t}x) - (n+1)t$ solves $\H(t\cdot F) = e^{2(t\cdot F)}$ on $\Omega$ and so equals $F$. By the logarithmic homogeneity, $dF(\eul) = -n-1$ and $|\eul|_{g}^{2} = n+1$. Let $D$ be the Levi-Civita connection of $g_{ij}$. Since $F^{i} = -\eul^{i}$ and $D_{i}F_{j} = 0$, the radial vector field $\eul^{i}$ is $g$-Killing. In particular $\lap_{g}F = 0$, showing \eqref{cylog}. For $r \in \rea$ consider $\lc_{r}(F, \Omega)$. By \eqref{affnorm} and $\eul^{i} = -F^{i}$, the positively co-oriented equiaffine normal is $\nm^{i} = (n+1)^{-(n+1)/(n+2)}e^{2F/(n+2)}\eul^{i}$, and the $g$-unit normal is $(n+1)^{-1/2}\eul^{i}$. The affine mean curvature of $\lc_{t}(F, \Omega)$ is $-(n+1)^{-(n+1)/(n+2)}e^{2t/(n+1)}$. This shows all of \eqref{cylevel} except for the completeness. Since $\eul^{i}$ is $g$-parallel, $\lc_{r}(F, \Omega)$ is a $g$-totally geodesic hypersurface, and its second fundamental form with respect to $-\eul^{i}$ is just the restriction $(n+1)^{-1}g_{IJ}$. Since, along $\lc_{t}(F, \Omega)$, $\nm^{i}$ is a homothetic rescaling of $\eul^{i}$, the second fundamental form with respect to $\nm^{i}$ is simply $h_{IJ} = (n+1)^{-1/(n+2)}e^{-2F/(n+2)}g_{IJ}$. It follows from these observations and \eqref{sffdefined} that the second fundamental form of $\lc_{r}(F, \Omega)$ is the restriction of the tensor 
\begin{align}
\label{hglobal2}
h_{ij} = (n+1)^{-1/(n+2)}e^{-2F/(n+2)}(g_{ij} - \tfrac{1}{n+1}F_{i}F_{j}). 
\end{align}
Observe that if $x \in \Omega$ then $e^{F(x)/(n+1)}x \in \lc_{0}(F, \Omega)$. This proves \eqref{loftinlemma}. Since $g$ is complete, it follows from the existence of the isometry of \eqref{loftinlemma} that $h$ is complete. By Theorem $3$ of \cite{Cheng-Yau-affinehyperspheresI} a hyperbolic affine sphere complete with respect to the affine metric is Euclidean complete (this also follows from the more general Theorem A of \cite{Trudinger-Wang-affinecomplete} showing that, for $n \geq 2$, an affine complete locally uniformly convex hypersurface in $\rea^{n+1}$ is Euclidean complete). By \cite{Vanheijenoort} and \cite{Sacksteder}, a Euclidean complete locally uniformly convex hypersurface is convex so lies on the boundary of a convex region. Since the space of rays is compact, given an unbounded sequence of points $x_{n}$ in a complete hyperbolic affine sphere $\Sigma$ with center $q$, after passing to a subsequence if necessary, the rays from $q$ to $x_{n}$ converge to a ray $L$ which does not intersect $\Sigma$, so is an asymptotic ray of $\Sigma$. By the homogeneity of $F$ the sets of asymptotic rays of the level sets $\lc_{r}(F, \Omega)$ coincide for different values of $r$ and lie in $\pr \Om$. Conversely, if $L$ is a ray contained in $\pr \Omega$, let $\{y_{n}\}$ be a sequence of points such that the rays passing through the $y_{n}$ converge to $L$. For any $r$ these rays intersect $\lc_{r}(F, \Om)$ in a sequence of points $\{x_{n}\}$ which cannot be bounded in $\lc_{r}(F, \Om)$ (were it, the rays would converge to a ray intersecting $\lc_{r}(F, \Om)$, which $L$ by assumption does not). Hence $L$ is an asymptotic ray of $\lc_{r}(F, \Om)$, and so $\pr \Om$ is a union of asymptotic rays of $\lc_{r}(F, \Om)$. This completes the proof of \eqref{cylevel}.

Suppose $\Sigma$ is an affine complete hyperbolic affine sphere with center at the vertex of $\Om$ and asymptotic to $\pr \Om$. For $r > 0$ let $r\Sigma = \{rx: x \in \Sigma\}$, which is also an affine complete hyperbolic affine sphere with center at the vertex of $\Om$ and asymptotic to $\pr \Om$. Again by Theorem $3$ of \cite{Cheng-Yau-affinehyperspheresI}, each $r\Sigma$ is Euclidean complete and so, as above, lies on the boundary of a convex region and can be represented as the graph of a convex function. It follows that $r\Omega$ and $s\Omega$ are disjoint if $r \neq s$ and that any $x \in \Om$ is contained in $r\Sigma$ for some $r > 0$. Hence the disjoint union $\cup_{r > 0}r\Sigma$ equals $\Om$. The function $G$ defined to equal $-(n+1)\log r$ on $r\Sigma$ is evidently smooth and so contained in $\lmg_{-n-1}$. As $G$ satisfies the hypotheses of Theorem \ref{ahtheorem} with $I = \rea$ and $\al = -(n+1)$, there holds $\H(G) = Be^{2G}$ for some constant $B \in \reat$. The co-orientation convention on the equiaffine normal to $\Sigma$ forces $\nm^{i} = c\eul^{i}$ with positive $c$, and a bit of calculation shows $B$ must be positive. By replacing $G$ by itself plus an appropriate constant, $B$ can be normalized to $1$. By Theorem \ref{schwarztheorem}, $G \leq F$. Since $G$ is logarithmically homogeneous, the same argument as that used above to show \eqref{loftinlemma} shows that $\Om$ equipped with the metric $\nabla_{i}dG_{j}$ is isometric to the product metric on the product of the real line with $\Sigma$, equipped with its equiaffine metric. Since the equiaffine metric on $\Sigma$ is assumed complete, it follows that $\nabla_{i}dG_{j}$ is complete, and so Theorem \ref{schwarztheorem} implies $G \geq F$. This proves that $\Sigma$ must be a level set of $F$, showing \eqref{cyfol}.
\end{proof}

\subsection{}\label{uniquenessexplanationsection}
Suppose $\Omega$ is a proper convex domain and $F \in \cinf(\Omega)$ solves $\H(F) = e^{2F}$ and $g_{ij} = \nabla_{i}dF_{j}$ is complete. It follows that the K\"ahler metric $G_{i\bar{j}}= F_{ij}(x)dz^{i}\tensor d\bar{z}^{j}$ on $\tube_{\Omega}$ determined by the potential $\pi^{\ast}(F)$ is a complete K\"ahler Einstein metric with negative scalar curvature. By Yau's Schwarz lemma for volume forms a biholomorphism between complete K\"ahler Einstein metrics having negative scalar curvature is an isometry. This can be used to deduce the uniqueness of $F$. An affine automorphism $g$ of $\Omega$ extends complex linearly to a biholomorphism of the tube $\tube_{\Omega}$. Writing the action of $g$ on $\tube_{\Omega}$ by $L_{g}$, it follows that $L_{g}^{\ast}(dd^{c}\pi^{\ast}(F)) = dd^{c}\pi^{\ast}(F)$. Hence $dd^{c}(\pi^{\ast}(g\cdot F - \log |\det \ell(g)|)) = L_{g}^{\ast}(dd^{c}\pi^{\ast}(F)) = dd^{c}\pi^{\ast}(F)$, from which it follows that $ L_{g}^{\ast}(\hess F) = \hess(g\cdot F - \log |\det \ell(g)|)=\hess F$, and so $g\cdot F - \log |\det \ell(g)| = \tfrac{1}{2}\log \H(g\cdot F - \log |\det \ell(g)|) = \tfrac{1}{2}\log \H(F)  = F$ and $g$ is an isometry of $\hess F$.

While the proof of the uniqueness of $F$ by passing to the tube domain and invoking the Schwarz lemma is a powerful demonstration of the efficacy of complex geometric methods, its relation to the geometry of the Hessian metric is perhaps somewhat obscure. Note that \textit{a priori} the ordinary Ricci curvature of $g_{ij}$ is not bounded below. When $\Om$ is a proper convex cone, such a bound is obtained as a consequence of the uniqueness, as this implies the logarithmic homogeneity of $F$, from which such a bound follows via \eqref{xxric3}. As was explained in the introduction, the more fundamental lower bound is that on the K\"ahler affine Ricci tensor, which corresponds directly to the Ricci tensor on $\tube_{\Omega}$ and which via \eqref{cd1} yields a lower bound on the $2(n+1)$-dimensional Bakry-Emery Ricci tensor. 

This point of view makes clear that the existence of the canonical potential of a proper convex cone $\Om$ and the associated foliation of the interior of $\Omega$ by hyperbolic affine spheres is closely related to the specialization to the tube domain $\tube_{\Omega}$ of the result of N. Mok and Yau in \cite{Mok-Yau} showing the existence of a negative K\"ahler Einstein metric on a bounded domain of holomorphy. Every tube domain over a proper convex domain is biholomorphic to a bounded domain of holomorphy because the base is contained in an affine image of an orthant, and the tube over an orthant is a product of disks. By the main theorem of \cite{Mok-Yau} a bounded domain of holomorphy admits a complete K\"ahler Einstein metric with negative scalar curvature which is, moreover, unique up to homothety. The K\"ahler metric determined on $\tube_{\Omega}$ by the pullback $\pi^{\ast}(F)$ of the canonical potential $F$ of $\Om$ is the special case of this K\"ahler Einstein metric when the domain of holomorphy is the tube over a proper open convex cone.

\subsection{}\label{slicesection}
The usual proof of the existence of the foliation of the interior of a proper convex cone by affine spheres goes through a different theorem of Cheng and Yau, resolving a different, but related Monge-Amp\`ere equation. Next it is briefly indicated how this is related to the approach described here. Let $F \in \cinf(\Om)$ and suppose that on $\lc_{r}(F, \Omega)$ there vanishes neither $dF$ nor $\H(F)$. After a rotation of the coordinates, there can be found an open domain $D \subset \rea^{n}$ and $f \in \cinf(D)$ such that the graph $\{(x, f(x)) \in D \times \rea: x \in D\}$ is contained in $\lc_{r}(F, \Om)$ and the partial derivative $F_{0}$ in the $x^{0}$ direction is not zero along this graph. Capital Latin indices indicate derivatives in the directions of the coordinates on $\rea^{n}$. Differentiating $r = F(x, f(x))$ yields
\begin{align}\label{Ff}
\begin{split}
0 & = F_{I} + F_{0}f_{I},\qquad
0  = F_{IJ} + F_{I0}f_{J} + F_{J0}f_{I} + F_{00}f_{I}f_{J} + F_{0}f_{IJ},
\end{split}
\end{align}
so that
\begin{align}\label{twofs}
\begin{split}
F_{0}^{3}f_{IJ} & = - F_{0}^{2}F_{IJ} + F_{0}(F_{I0}F_{J} + F_{J0}F_{I}) - F_{00}F_{I}F_{J}. 
\end{split}
\end{align}
By \eqref{Ff}, $x^{Q}F_{Q} = dF(\eul) - fF_{0} = -(n+1) - fF_{0}$, so that the Legendre transform $f^{\ast}$ of $f$ is
\begin{align}\label{fastdf}
f^{\ast} = x^{I}f_{I} - f = -x^{I}(F_{I}/F_{0}) - f = - dF(\eul)/F_{0}. 
\end{align}
Define $\H(f)$ and $\H(f^{\ast})$ in the same manner as $\H(F)$, though with respect to the coordinates $x^{I}$ on $\rea^{n}$, and the coordinates $y_{I} = \tfrac{\pr f}{\pr x^{I}}$ on the image of $D$ under $df$. Evaluating the determinant using elementary row and column operations yields
\begin{align}\label{ufdet}
\begin{vmatrix} F_{ij} & F_{i} \\ F_{j} & 0 \end{vmatrix}  = \begin{vmatrix} F_{ij} & 0 \\ F_{j}& - |dF|_{g}^{2}\end{vmatrix} = -\H(F)|dF|^{2}_{g}.
\end{align}
Combining \eqref{ufdet} and \eqref{Ff}, and computing as in the proof of Theorem $4$a of \cite{Sasaki} gives
\begin{align}\label{ufgraph}
\begin{split}
-\H(F)|dF|^{2}_{g} & = \begin{vmatrix} F_{IJ} & F_{I0} & F_{I} \\ F_{J0} & F_{00} & F_{0}\\ F_{J} & F_{0} & 0 \end{vmatrix} = \begin{vmatrix} -F_{0}f_{IJ} & F_{I0} + F_{00}f_{I} & 0 \\ F_{J0} + F_{00}f_{J} & F_{00} & F_{0}\\ 0 & F_{0} & 0\end{vmatrix}
\\& = - F_{0}\begin{vmatrix} -F_{0}f_{IJ} & 0 \\  F_{J0} + F_{00}f_{J} & F_{0}\end{vmatrix} 
= (-1)^{n+1}(F_{0})^{n+2}\H(f).
\end{split}
\end{align}
From \eqref{ufdet} and \eqref{ufgraph} it follows that
\begin{align}\label{hfast}
\begin{split}
\H(f^{\ast}) &= \H(f)^{-1} = (-1)^{n}\H(F)^{-1}|dF|^{-2}_{g}F_{0}^{n+2} = \H(F)^{-1}|dF|^{-2}_{g}dF(\eul)^{n+2}(f^{\ast})^{-n-2}.
\end{split}
\end{align}
As explained in the proof of Theorem \ref{cytheorem}, a level set of the canonical potential $F$ of a proper open convex cone $\Om$ can be written as a graph over $D = \rea^{n}$. In this case, substituting $dF(\eul) = -(n+1)$, $|dF|^{2}_{g} = n+1$, and $H(F) = e^{2F}$ in \eqref{hfast} yields
\begin{align}
\begin{split}
\H(f^{\ast})&= (n+1)^{n+1}e^{2F}(-f^{\ast})^{-n-2}.
\end{split}
\end{align}
Along a level set of $F$ the righthand side is a constant times $(-f^{\ast})^{-n-2}$, and it follows from Lemma \ref{aslemma} that the graph of $f$ and the radial graph of $f^{\ast}$ are hyperbolic affine spheres. 

\begin{lemma}\label{aslemma}
The following are equivalent for a locally uniformly convex function $u \in \cinf(\Omega)$.
\begin{enumerate}
\item\label{as1} There is $c \neq 0$ such that 
\begin{align}\label{cyma}
u^{n+2}\H(u) = c^{-n-2}.
\end{align}
\item\label{as2} The graph of the Legendre transform of $u$ is a mean curvature $c$ affine sphere.
\item The radial graph $\{(u(x)^{-1}, -u(x)^{-1}x^{i}) \in \rea^{n+1}\}$ is a mean curvature $c^{-1}$ affine sphere.
\end{enumerate}
\end{lemma}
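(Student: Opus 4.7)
The plan is to deduce both equivalences (1) $\Leftrightarrow$ (3) and (1) $\Leftrightarrow$ (2) as two applications of Theorem \ref{ahtheorem}, by attaching to each of the two hypersurfaces a $-(n+1)$-logarithmically homogeneous potential $F$ on an open cone in $\rea^{n+1}$ whose zero level set is that hypersurface, and translating the Monge-Amp\`ere condition $\H(F) = Be^{2F}$ into (1), with the affine mean curvatures in (2) and (3) then read off from \eqref{ahmc}.

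For (1) $\Leftrightarrow$ (3) I use the potential
\begin{align*}
F(y) = -(n+1)\log h(y), \qquad h(y) = y^0\, u\bigl(-y^1/y^0, \ldots, -y^n/y^0\bigr),
\end{align*}
defined on the open cone $\{y \in \rea^{n+1} : y^0 > 0,\ -y/y^0 \in \Omega\}$; by construction $F$ is $-(n+1)$-logarithmically homogeneous and its zero level set is the radial graph of $u$. The key step is a closed form for $\H(F)$. Since $h$ is positively homogeneous of degree one, Euler's identity gives $\nabla^2 h\cdot y = 0$, so $\adj(\nabla^2 h) = \sigma\, yy^T$ for a scalar $\sigma$. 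Writing the factorization $\nabla^2 h = (y^0)^{-1}\, v^T [u_{ij}]\, v$, with $v$ the explicit $n\times(n+1)$ matrix having $v^i{}_0 = -y^i/y^0$ and $v^i{}_j = \delta^i{}_j$ for $j \geq 1$, and completing $v$ to an invertible $(n+1)\times(n+1)$ matrix by adjoining the row $w^T = (1, -y^1/y^0, \ldots, -y^n/y^0)$ (which spans $\ker v$), one extracts $\sigma = (y^0)^{-(n+2)}\H(u)$. The matrix-determinant lemma applied to the rank-one update $\nabla^2 F = -(n+1) h^{-1}(\nabla^2 h - h^{-1}\nabla h\,\nabla h^T)$, combined with $\nabla h \cdot y = h$, then yields
\begin{align*}
\H(F)(y) = (-1)^n(n+1)^{n+1}\, u(\bar x)^{-n}\, (y^0)^{-2(n+1)}\, \H(u)(\bar x), \qquad \bar x = -y/y^0.
\end{align*}
Since $e^{2F} = (y^0 u(\bar x))^{-2(n+1)}$, the equation $\H(F) = Be^{2F}$ reduces to $(-1)^n(n+1)^{n+1}\, u^{n+2}\H(u) = B$, i.e.\ to (1) with the identification $c^{-(n+2)} = (-1)^n B/(n+1)^{n+1}$; Theorem \ref{ahtheorem} then identifies the radial graph as a hyperbolic affine sphere centered at the origin, and \eqref{ahmc} evaluates its affine mean curvature to be $c^{-1}$.

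For (1) $\Leftrightarrow$ (2) I apply the same strategy to the $-(n+1)$-logarithmically homogeneous potential $F$ on the cone over the graph of $f = u^*$ uniquely characterized by having the graph of $f$ as its zero level set, i.e.\ $F(t\cdot (y, f(y))) = -(n+1)\log t$. Rather than recompute $\H(F)$ from scratch, the cleanest route is to invoke the identity \eqref{hfast} already derived in the paragraphs immediately preceding the lemma. Substituting the values $dF(\eul) = -(n+1)$ and $|dF|^2_g = n+1$ dictated by logarithmic homogeneity, and using the involution $(u^*)^* = u$, the equation $\H(F) = Be^{2F}$ of Theorem \ref{ahtheorem} becomes the equation $u^{n+2}\H(u) = (-1)^n(n+1)^{n+1}/B$, i.e.\ (1); and \eqref{ahmc} identifies the graph of $u^*$, as a level set of $F$, as a hyperbolic affine sphere with mean curvature $c$. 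The principal technical obstacle is the linear-algebra identity producing the scalar $\sigma$ from the degenerate Hessian $\nabla^2 h$, after which the remainder is purely sign and constant bookkeeping to align the Monge-Amp\`ere constant $B$ of Theorem \ref{ahtheorem} with the mean curvatures $c$ and $c^{-1}$ in (2) and (3) via \eqref{ahmc}.
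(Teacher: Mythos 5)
Your proposal is essentially correct, but it takes a genuinely different route from the paper, which does not prove Lemma \ref{aslemma} at all: it cites the equivalences as direct extrinsic calculations, \eqref{as1}$\iff$\eqref{as2} being Proposition $1.2$ of \cite{Calabi-completeaffine} and the rest Proposition $4.3$ of \cite{Gigena-integralinvariants}. Your derivation instead runs everything through Theorem \ref{ahtheorem} by passing to homogeneous coordinates, which is consistent with the philosophy of section \ref{slicesection}; since \eqref{hfast} is derived there independently of Lemma \ref{aslemma} and Theorem \ref{ahtheorem} is proved independently in section \ref{affinesection}, inverting the paper's logical flow introduces no circularity. The central computation checks out: with $h(y)=y^{0}u(-y/y^{0})$ one has $\nabla dF = -(n+1)h^{-1}(\nabla dh - h^{-1}dh\tensor dh)$, the matrix determinant lemma together with $\adj(\nabla dh)=\sigma\, y\tensor y$ and $y^{p}h_{p}=h$ gives $\H(F)=(-1)^{n}(n+1)^{n+1}h^{-n}\sigma$, and the $(0,0)$ cofactor identity $\sigma (y^{0})^{2}=(y^{0})^{-n}\H(u)$ pins down $\sigma$; the resulting constants are compatible with \eqref{ahmc} (for instance, $n=1$, $u=-\sqrt{1-x^{2}}$ gives $B=4$, $c=-1$, and affine mean curvature $-1$, as they should). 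What your approach buys is a uniform treatment of both equivalences from a single theorem; what the classical route buys is that it works for affine spheres of any type without further input.

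Three caveats. First, Theorem \ref{ahtheorem} as stated requires $\al<0$ and $\nabla dF$ positive definite and so produces only hyperbolic affine spheres; your argument therefore proves the lemma only for $c<0$, while the statement formally allows $c>0$ (elliptic affine spheres), for which one needs the indefinite-signature version of Theorem \ref{ahtheorem} that the paper only alludes to. Second, the cone is misidentified: when $u<0$ the radial graph lies in $\{y^{0}<0\}$, and the correct domain of $F=-(n+1)\log h$ is the component of $\{h>0\}$ containing it, not $\{y^{0}>0\}$. Third, for the converse implications you need the affine sphere to be centered at the origin before \eqref{aht1} of Theorem \ref{ahtheorem} applies; this is implicit in the intended reading of the lemma, but should be stated, since an affine sphere centered elsewhere would satisfy a different equation. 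None of these is fatal, and the linear-algebra identity you single out as the main obstacle is correct.
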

\noindent
All the equivalences of Lemma \ref{aslemma} are proved by direct calculations. The equivalence \eqref{as1}$\iff$\eqref{as2} is Proposition $1.2$ of \cite{Calabi-completeaffine} (see also the proof of Theorem $3$ in \cite{Loftin-affinespheres}). The remaining equivalences were proved using different language in \cite{Gigena-integralinvariants} (see Proposition $4.3$).

Because of Lemma \ref{aslemma}, to construct an affine sphere it is enough to solve the equation \eqref{cyma}. Precisely, Cheng and Yau proved that for $c < 0$ the equation \eqref{cyma} has on a bounded convex domain in $\rea^{n}$ a unique negative convex solution vanishing on the boundary of the domain, and that the resulting affine sphere is properly embedded and asymptotic to the cone over the sphere (the $n = 2$ case is due to \cite{Loewner-Nirenberg}). The relation between the solution $u$ of $u^{n+2}\H(u) = c^{-n-2}$ and the canonical potential $F$ of $\Om$ was explained above. It can be seen as an instance of the passage from a projective picture (inhomogeneous coordinates) to an affine picture (homogeneous coordinates). Viewing $\Om$ as an $\reap$ principal bundle over $\projp(\Om)$, the function $u$ (that is, the Legendre transform $f^{\ast}$) can be understood invariantly as the section of the line bundle of $-1/(n+1)$ densities on $\projp(\Om)$ corresponding to the equivariant function $e^{-F/(n+1)}$. That the theory of hyperbolic affine spheres can be founded on the equation $\H(F) = e^{2F}$ rather than on the equation \eqref{cyma} seems not widely recognized beyond the articles \cite{Loftin-affinekahler} and \cite{Cheng-Yau-mongeampere} (in the latter, the connection is only implicit).

\subsection{}\label{characteristicfunctionsection}
In this section it is shown that the Legendre transform interchanges the canonical potentials of dual cones. The precise statement is Theorem \ref{fdualitytheorem}.

\begin{lemma}\label{bijectionlemma}
If $F$ is the canonical potential of the proper open convex cone $\Omega \subset \rea^{n+1}$, the map $x \to \cp_{\Omega}(x)= -dF(x)$ is a diffeomorphism from $\Omega$ onto $\Omega^{\ast}$.
\end{lemma}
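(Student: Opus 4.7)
The plan is to show $\cp_{\Om}$ is a smooth injective proper local diffeomorphism mapping $\Om$ into $\Om^{\ast}$; since $\Om^{\ast}$ is open and connected, the image will then be clopen and nonempty in $\Om^{\ast}$, hence equal to $\Om^{\ast}$, so $\cp_{\Om}$ is a diffeomorphism. The local diffeomorphism property is immediate since the Jacobian of $\cp_{\Om}=-dF$ equals $-\nabla dF=-g$, which is negative definite because $g$ is Riemannian. Injectivity follows from the strict convexity of $F$ on the convex set $\Om$: if $\cp_{\Om}(x_0)=\cp_{\Om}(x_1)$, the strictly convex function $\phi(t)=F(x_0+t(x_1-x_0))$ on $[0,1]$ would satisfy $\phi'(0)=\phi'(1)$, forcing $x_0=x_1$.

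To show $\cp_{\Om}(\Om)\subset\Om^{\ast}$, fix $x\in\Om$ and $y\in \Om$. Since $\Om$ is a convex cone, $\phi(t)=F(x+ty)$ is defined on $[0,\infty)$ and strictly convex, and the $-(n+1)$-logarithmic homogeneity of $F$ gives $\phi(t)= F(y+x/t)-(n+1)\log t\to -\infty$ as $t\to\infty$ by continuity of $F$ at $y\in\Om$. A strictly convex function tending to $-\infty$ has $\phi'<0$ everywhere, so $\phi'(0)=y^p F_p(x)<0$. Passing to a limit over $y_k\in\Om$ with $y_k\to y\in\pr\Om\setminus\{0\}$ gives the weak inequality $y^p F_p(x)\leq 0$ for every $y\in\bar\Om\setminus\{0\}$, so $\cp_{\Om}(x)\in\overline{\Om^{\ast}}$. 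Since $\cp_{\Om}(\Om)$ is open in $\rea^{n+1\,\ast}$ while the interior of $\overline{\Om^{\ast}}$ equals $\Om^{\ast}$ (as $\Om^{\ast}$ is open and convex), we conclude $\cp_{\Om}(\Om)\subset\Om^{\ast}$.

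The principal step is properness. Suppose $\cp_{\Om}(x_n)\to\eta\in\Om^{\ast}$ and fix a compact neighborhood $K$ of $\eta$ in $\Om^{\ast}$, so that $\cp_{\Om}(x_n)\in K$ for large $n$. On the compact set $K\times(\bar\Om\cap S^n)$ the continuous function $(\xi,y)\mapsto\langle\xi,y\rangle/|\xi|$ is strictly positive (by definition of $\Om^{\ast}$) and so has a positive lower bound $c>0$; therefore $\langle\xi,y\rangle\geq c\,|\xi|\,|y|$ for all $\xi\in K$ and $y\in\bar\Om$. The Euler identity $\langle\cp_{\Om}(x_n),x_n\rangle=-\eul^p F_p(x_n)=n+1$ (from $F\in\lmg_{-(n+1)}$) combined with this bound and Cauchy--Schwarz yields
\begin{align*}
(n+1)/|\cp_{\Om}(x_n)| \leq |x_n| \leq (n+1)/(c\,|\cp_{\Om}(x_n)|),
\end{align*}
so $|x_n|$ is bounded above and below; passing to a subsequence, $x_n\to x_\infty\in\bar\Om\setminus\{0\}$. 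If $x_\infty\in\pr\Om$, the barrier property $F(x_n)\to+\infty$ from Theorem \ref{cytheorem} together with the convexity inequality $F(z)\geq F(x_n)+(z-x_n)^p F_p(x_n)$, rewritten using the Euler identity as
\begin{align*}
\langle\cp_{\Om}(x_n),z\rangle \geq F(x_n)-F(z)+(n+1) \qquad (z\in\Om),
\end{align*}
drives the right side to $+\infty$ while the left side converges to $\langle\eta,z\rangle<\infty$, a contradiction. Hence $x_\infty\in\Om$ and properness is established.

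Combining the four properties, $\cp_{\Om}(\Om)$ is a nonempty open-and-closed subset of the connected set $\Om^{\ast}$, so $\cp_{\Om}(\Om)=\Om^{\ast}$ and $\cp_{\Om}$ is the desired diffeomorphism. The principal obstacle is the properness argument, in particular eliminating $x_\infty\in\pr\Om\setminus\{0\}$: here the essential ingredient is the barrier property of the canonical potential (rather than mere convexity of $F$), which forces the gradient map to behave well at the boundary.
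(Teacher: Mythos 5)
Your proof is correct and complete, but it is organized quite differently from the paper's. The paper fixes $y \in \Omega^{\ast}$ and works on the affine slice $L = \{x \in \Omega : x^{p}y_{p} = n+1\}$: the slice is bounded (by the same kind of estimate as your two-sided bound on $|x_{n}|$, applied with $K = \{y\}$), the barrier property forces the restriction of $F$ to $L$ to attain its minimum at an interior point, Lagrange multipliers together with the Euler identity $x^{p}F_{p}(x) = -(n+1)$ show that $-dF = y$ at that point, and convexity plus the barrier property exclude a second relative minimum on $L$, which gives injectivity. You instead obtain injectivity from the strict monotonicity of the derivative of $F$ along segments, and surjectivity from properness of $\cp_{\Omega}$ over $\Omega^{\ast}$ followed by the open-and-closed argument on the connected set $\Omega^{\ast}$; both proofs consume exactly the same inputs (logarithmic homogeneity, positive definiteness of $\nabla dF$, and the barrier property), so the difference is one of packaging rather than substance. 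One point where your write-up is more careful than the paper's: since $\Omega^{\ast}$ is defined by strict positivity against all of $\bar{\Omega}\setminus\{0\}$, the pointwise inequality $-y^{p}F_{p}(x) > 0$ for $y \in \Omega$ only places $-dF(x)$ in $\overline{\Omega^{\ast}}$, and your appeal to the openness of $\cp_{\Omega}(\Omega)$ to upgrade this to membership in $\Omega^{\ast}$ supplies a half-step that the paper's "since $\bar{x}$ is an arbitrary element of $\Omega$" leaves implicit.
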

\begin{proof}
For $x, \bar{x} \in \Omega$, since the plane tangent to the level set of $F$ passing through $e^{t}x$ is the parallel translate of the plane tangent to the level set of $F$ through $x$, there are a unique vector $v$ tangent to this plane, and a unique $t \in \rea$ such that $\bar{x} = e^{t}x + v$. Then $-\bar{x}^{p}F_{p}(x) = (n+1)e^{t} > 0$. Since $\bar{x}$ is an arbitrary element of $\Om$, this shows $-F_{i}(x) \in \Om^{\ast}$. Since the differential of $x \to -dF(x)$ is a multiple of the Hessian of $F$, which is nondegenerate, this map is a local diffeomorphism, and so to prove the claim it is enough to prove that $-dF(x)$ is a bijection. With the obvious modifications, this is shown just as in the proof of the analogous claim regarding the negative of the logarithmic differential of the characteristic function of $\Omega$, which can be found as Proposition I.$3.4$ of \cite{Faraut-Koranyi} or Proposition $4.1.12$ of \cite{Shima}. Given $y_{i} \in\Omega^{\ast}$ the subset $L = \{x^{i}\in \Omega: x^{p}y_{p} = n+1\}$ is compact and so $F$ assumes a local minimum at some $x$ in the interior of $L$. At such an $x$ there holds $F_{i}(x) = \la y_{i}$, and so $n+1 = -x^{p}F_{p}(x) = -\la x^{p}y_{p} = -(n+1)\la$, showing $\la = -1$ and so $y_{i} = -F_{i}(x)$. 
Since $F(z)$ goes to infinity as $z$ tends to the boundary of $\Omega$, it follows from the convexity of $F$ that it cannot have two relative minima on $L$. 
This suffices to prove the claim.
\end{proof}

For a convex cone $\Omega$ let $-\Omega = \{x \in \rea^{n+1}:-x \in \Omega\}$ and note that $(-\Omega)^{\ast} = -(\Omega^{\ast})$, so that $-\Omega^{\ast}$ has an unambiguous meaning. Let $F_{\Omega}$ be the canonical potential of the proper open convex cone $\Omega$. Since, as is easily verified, $F_{-\Omega}(-x)$ has all the same properties on $\Omega$ as has $F_{\Omega}(x)$, it follows from Theorem \ref{schwarztheorem} that $F_{-\Omega}(-x) = F_{\Omega}(x)$. By Lemma \ref{bijectionlemma} the differential $dF_{\Omega}$ maps $\Omega$ diffeomorphically onto $-\Omega^{\ast}$, and so the Legendre transform $F^{\ast}_{\Omega}$ is the smooth function on $-\Omega^{\ast}$ defined by $F^{\ast}_{\Omega}(dF_{\Omega}(x)) + F_{\Omega}(x) = - n - 1$. Since on $-\Omega^{\ast}$ there holds
\begin{align*}
\H(F^{\ast}_{\Omega} + n+1)(dF_{\Omega}(x)) & = \H(F^{\ast}_{\Omega})(dF_{\Omega}(x)) = (\H(F_{\Omega})(x))^{-1} = e^{-2F_{\Omega}(x)} = e^{2(F^{\ast}_{\Omega}(dF(x)) + n+1)},
\end{align*}
it follows from Corollary \ref{gfcorollary} that $F^{\ast}_{\Omega} + n + 1 \leq F_{-\Omega^{\ast}}$, in which $F_{-\Omega^{\ast}}$ is the canonical potential of $-\Omega^{\ast}$. From $F_{-\Omega}(-x) = F_{\Omega}(x)$ for $x \in \Om$ there follows $\cp_{-\Omega}(-x) = -\cp_{\Omega}(x) = dF_{\Om}(x)$. By a standard property of the Legendre transform it follows that for $x \in \Om$ there holds
\begin{align}
\begin{split}
\cp_{\Omega}^{\ast}(\nabla dF_{-\Omega}^{\ast})(x) & = -\cp_{-\Omega}^{\ast}(\nabla dF_{-\Om}^{\ast})(-x) = (\nabla dF_{-\Om})(-x) = (\nabla dF_{\Om})(x),
\end{split}
\end{align}
which shows that $\cp_{\Omega}:\Omega \to \Omega^{\ast}$ is an isometry with respect to the Hessian metrics determined by the potentials $F_{-\Omega}$ and $F_{\Omega}^{\ast}$. It follows that the Hessian of $F_{\Omega}^{\ast} + n+1$ is complete, and so a second use of Theorem \ref{schwarztheorem}, with the roles of $F_{-\Omega^{\ast}}$ and $F_{\Omega}^{\ast} + n+1$ interchanged shows that $F_{-\Omega^{\ast}} \leq F_{\Omega}^{\ast} + n+1$. 
\begin{theorem}\label{fdualitytheorem}
For a proper open convex cone $\Omega$ and the dual cone $\Omega^{\ast}$ the canonical potentials $F_{\Omega}$ and $F_{\Omega^{\ast}}$ are related by
\begin{align}\label{fomegaast}
F_{\Omega^{\ast}}(-y) - \tfrac{n+1}{2} = F_{-\Omega^{\ast}}(y) - \tfrac{n+1}{2} = F_{\Omega}^{\star}(y) + \tfrac{n+1}{2} = (F_{\Omega} - \tfrac{n+1}{2})^{\star}(y).
\end{align} 
for $y \in -\Omega^{\ast}$. Moreover, the inverse diffeomorphisms $\cp_{\Om}:\Om \to \Om^{\ast}$ and $\cp_{\Om^{\ast}}:\Om^{\ast} \to \Om$ are isometries with respect to the canonical Hessian metrics and there hold: 
\begin{enumerate}
\item\label{cf1} $\cp_{\Omega}(g \cdot x) = g^{-1} \cdot \cp_{\Omega}(x)$ for all $g \in \Aut(\Omega)$.
\item\label{cf2} $e^{F_{\Omega}(x)}e^{F_{\Omega^{\ast}}(\cp_{\Omega}(x))} = 1$. 
\item\label{cf3} The diffeomorphisms $\cp_{\Om}:\Om \to \Om^{\ast}$ and $\cp_{\Om^{\ast}}:\Om^{\ast} \to \Om$ are isometries with respect to the canonical metrics and are inverses, meaning $\cp_{\Omega^{\ast}}\circ \cp_{\Omega} = \Id$.
\end{enumerate}
\end{theorem}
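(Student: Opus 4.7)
The plan is to extract (\ref{fomegaast}) from the two opposite inequalities for $F_\Om^\star$ and $F_{-\Om^\ast}$ already derived in the discussion preceding the statement, and then to deduce the three numbered consequences by direct calculation using standard Legendre duality together with the logarithmic homogeneity of $F_\Om$. The preceding paragraphs use Corollary~\ref{gfcorollary} twice to obtain $F_\Om^\star + n+1 \leq F_{-\Om^\ast}$ and $F_{-\Om^\ast} \leq F_\Om^\star + n+1$ on $-\Om^\ast$; the second inequality relies on the fact that $\cp_\Om$ intertwines the Hessians of $F_{-\Om}$ and $F_\Om^\star$, which supplies the completeness needed to apply the Schwarz lemma. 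Combining them gives $F_{-\Om^\ast} = F_\Om^\star + n+1$; the middle equality of (\ref{fomegaast}) then follows from $F_{-\Om^\ast}(y) = F_{\Om^\ast}(-y)$, itself obtained by applying the same reflection argument that gave $F_{-\Om}(-x) = F_\Om(x)$ to the dual cone, and the rightmost equality with $(F_\Om - (n+1)/2)^\star$ is an elementary constant shift of the Legendre transform.

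For property (\ref{cf1}) I would differentiate the $\Aut$-equivariance $F_\Om(gx) = F_\Om(x) - \log\det\ell(g)$ of Theorem~\ref{cytheorem}(\ref{cyhom}), obtaining $\ell(g)^t dF_\Om(gx) = dF_\Om(x)$ and hence $\cp_\Om(gx) = g^{-1}\cdot \cp_\Om(x)$ under the contragredient action of $\Aff(n+1,\rea)$ on $\rea^{n+1,\ast}$. For property (\ref{cf2}), the defining Legendre relation $F_\Om^\star(dF_\Om(x)) = x^p F_p(x) - F_\Om(x)$ combined with the homogeneity $x^p F_p(x) = -(n+1)$ yields $F_\Om^\star(-\cp_\Om(x)) = -(n+1) - F_\Om(x)$; substituting into (\ref{fomegaast}) in the form $F_{\Om^\ast}(-y) = F_\Om^\star(y) + n+1$ at $y = -\cp_\Om(x)$ gives $F_{\Om^\ast}(\cp_\Om(x)) = -F_\Om(x)$, which exponentiates to (\ref{cf2}). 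For property (\ref{cf3}), differentiating $F_{\Om^\ast}(y) = F_\Om^\star(-y) + n+1$ yields $\cp_{\Om^\ast}(y) = dF_\Om^\star(-y)$; evaluating at $y = \cp_\Om(x) = -dF_\Om(x)$ and invoking the Legendre involutivity $dF_\Om^\star \circ dF_\Om = \Id_\Om$ gives $\cp_{\Om^\ast} \circ \cp_\Om = \Id_\Om$.

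The isometry claim in (\ref{cf3}) follows by combining the isometry $\cp_\Om:(\Om, \nabla dF_\Om) \to (\Om^\ast, \nabla dF_{-\Om}^\star)$ already established in the preceding discussion with the identity $F_{\Om^\ast}(z) = F_{-\Om}^\star(z) + n+1$ on $\Om^\ast$---a reflection-translate of the main equality of (\ref{fomegaast})---which forces $\nabla dF_{\Om^\ast} = \nabla dF_{-\Om}^\star$ on $\Om^\ast$. The only substantive step in the whole proof, already executed in the discussion preceding the statement, is the double use of the Schwarz lemma; the delicate piece there is ensuring completeness of the Hessian metric of $F_\Om^\star$ so that Corollary~\ref{gfcorollary} can be applied in the second direction. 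Everything else is routine Legendre calculus.
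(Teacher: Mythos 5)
Your proposal is correct and follows essentially the same route as the paper: the paper's own proof simply notes that everything was established in the discussion preceding the statement (the two opposite inequalities from the double application of the Schwarz lemma, with completeness of the Hessian of $F_{\Omega}^{\ast}+n+1$ supplied by the Legendre-transform isometry), that \eqref{cf2} is a restatement of \eqref{fomegaast}, and that \eqref{cf1} follows by differentiating the $\Aut(\Omega)$-equivariance. Your write-up just makes the routine Legendre calculus for \eqref{cf2} and \eqref{cf3} explicit.
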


\begin{proof}
Essentially all the claims were proved in the preceeding discussion. Note that \eqref{cf2} is a restatement of \eqref{fomegaast}, while \eqref{cf1} follows by differentiating $g\cdot F = F$ for $g \in \Aut(\Om)$. 
\end{proof}

\noindent
From Theorems \ref{fdualitytheorem} and \ref{affinespheretheorem} it follows that the affine spheres foliating the proper open convex cone $\Omega$ and those foliating the dual cone $\Omega^{\ast}$ are dual; this is essentially because along a level set of the canonical potential $F$ the map $\cp_{\Omega}$ is a constant multiple of the equiaffine conormal of a level set of $F$.

\subsection{}\label{homogeneoussection}
Suppose the proper open convex domain $\Om$ is homogeneous. Since for $g \in \Aut(\Omega)$ there hold $g \cdot \H(\log \phi_{\Omega}) = (\det \ell(g))^{2} \H(\log \phi_{\Omega})$ and $g\cdot \phi_{\Omega} = \det \ell(g)\phi_{ \Omega}$, the homogeneity $0$ function $\phi_{\Omega}^{-2}\H(\log \phi_{\Omega})$ is $\Aut(\Omega)$ invariant. Because $\Omega$ is homogeneous, $\phi_{\Omega}^{-2}\H(\log \phi_{\Omega})$ must be constant, and because $\nabla d\log \phi_{\Omega}$ is positive definite, the constant must be positive, so there is a constant $c > 0$ such that $\H(\log \phi_{\Omega}) = c^{2} \phi_{\Omega}^{2}$ (this observation is implicit in the proof of Theorem $4a$ of \cite{Sasaki}; see also equation $(6)$ of section $1$ of chapter II of \cite{Koecher} or exercise $7$ of chapter I of \cite{Faraut-Koranyi}). By \eqref{hpsif}, it follows from $\H(\log \phi_{\Omega}) = c^{2}\phi_{\Omega}^{2}$ that $\H(\phi_{\Omega}) = c^{2}(n+2)\phi_{\Omega}^{n+3}$ and $\H(\phi_{\Omega}^{-2}) = (-2)^{n}(4n+2)c^{2}\phi_{\Omega}^{-2n}$. By the uniqueness part of Theorem \ref{cytheorem}, for a homogeneous proper open convex cone there must hold $e^{F_{\Om}} = c \phi_{\Omega}$ for some nonzero constant $c$. This suggests the following question: \textit{if the canonical potential $F_{\Om}$ of the proper open convex cone $\Om$ differs from $\log \phi_{\Om}$ by a constant, must the cone be homogeneous?}

Let $\delta_{ij}$ be the standard Euclidean metric. For a proper open convex cone $\Omega$ define $\Omega^{\sharp} = \{\delta_{ip}x^{p}: x \in \Omega\}$ to be the proper open convex cone in $(\rea^{n+1})^{\ast}$ Euclidean dual to $\Omega$. Then $\Omega$ is (Euclidean) \textbf{self-dual} if $\Omega^{\ast} = \Omega^{\sharp}$. A proper open convex cone is \textbf{symmetric} if it is homogeneous and self-dual. It is known that for a symmetric convex cone $\Omega$ the function $\phi_{\Omega}^{-2}$ is a polynomial (see e.g. Theorems $14$ and $15$ in section $5$ of chapter VI of \cite{Koecher}). Precisely, $\phi_{\Omega}^{-2}$ is a multiple of the square of the quadratic representation of the Euclidean Jordan algebra associated to $\Omega$. 
This suggests the following question: \textit{if, for a proper open convex cone $\Om$ with canonical potential $F_{\Om}$ some power of $e^{F_{\Om}}$}\textit{ is a polynomial, must $\Om$ be symmetric?}

The main theorem of \cite{Hu-Li-Vrancken} can be rephrased as saying that the asymptotic cone of a homogeneous hyperbolic affine sphere is a product of (possibly one-dimensional) symmetric cones. In fact, this could be deduced from the polynomiality of $P = e^{2F_{\Om}}$ and the Sato-Kimura classification of prehomogeneous vectors spaces, for the action of $\Aut(G)$ on $\Om$ is a real prehomogeneous vector space underlying a complex prehomogeneous vector space having $P$ as a relative invariant. This argument has the virtue of applying to homogeneous affine spheres with equiaffine metrics of indefinite signature. As a full treatment requires some care, and discussion of the meaning for affine spheres of the castling transform used to define equivalence of prehomogeneous vector spaces, the details will be reported elsewhere.

For a homogeneous convex cone $\Omega$ and the usual characteristic function $\phi_{\Omega}$, the conclusions \eqref{cf1}-\eqref{cf3} of Theorem \ref{fdualitytheorem} are well known (see Proposition I.$1.4$ and the notes to chapter I of \cite{Faraut-Koranyi}). By \eqref{cylog} of Theorem \ref{affinespheretheorem}, $\cy_{\Omega} = e^{F}$ has positive homogeneity $-n-1$ and satisfies $\H(\cy_{\Omega}) = (n+2)\cy_{\Omega}^{n+3}$. As is illustrated by Theorem \ref{fdualitytheorem}, many things true for the usual characteristic function on a homogeneous convex cone are in fact true for $\cy_{\Om}$ on any convex cone, and so should be understood as consequences of the identity $\cy_{\Omega} = c \phi_{\Omega}$ in the homogeneous case. This suggests the point of view that homogeneity of $\Om$ forces $e^{F_{\Om}}$ to have a very nice integral representation. Is there always an integral representation for $e^{F_{\Om}}$ similar to \eqref{usualchardefined}? A more precise question is the following. The usual characteristic function is the Laplace transform of the indicator function of the dual convex cone, and it follows straightforwardly that it is completely monotone. If it could be shown that $e^{F_{\Om}}$ were completely monotone, then by the classical theorem of Bochner, it would be representable as the Laplace transform of some measure on the dual cone (see Theorem $4.2.2$ of \cite{Bochner}). Here that a function $A$ on the proper open convex cone $\Om$ be completely monotone means that for any $k \geq 0$ and any vectors $v(1), \dots v(k)$ in $\Om$ there hold $(-1)^{k}v(1)^{i_{1}}\dots v(k)^{i_{k}}A_{i_{1}\dots i_{k}} \geq 0$. For $A = e^{F_{\Om}}$ this is straightforward for $0 \leq k \leq 2$, but it is not clear whether it is true for larger $k$.

\subsection{}\label{parakahlersection}
In this section it is explained how to construct from the canonical potential of a convex cone a mean curvature zero conical Lagrangian submanifold of the standard flat para-K\"ahler space. 

\begin{lemma}\label{ubijectionlemma}
Let $F$ be the canonical potential of the proper open convex cone $\Om \subset \rea^{n+1}$ and let $u = -((n+1)/2)e^{-2F/(n+1)}$. Then the map $x \to -du(x)$ is a diffeomorphism from $\Om$ to $\Om^{\ast}$.
\end{lemma}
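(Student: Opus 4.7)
The plan is to realize $-du$ as a positive radial rescaling of the diffeomorphism $\cp_\Om:\Om\to\Om^{\ast}$ of Lemma \ref{bijectionlemma}, and to deduce everything from this representation together with the nonvanishing of the Hessian determinant of $u$.

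The first step is a direct computation. From $u = -\tfrac{n+1}{2}e^{-2F/(n+1)}$ one finds $du_i = e^{-2F/(n+1)}F_i$, whence
\begin{align*}
-du(x) \;=\; e^{-2F(x)/(n+1)}\,\cp_\Om(x).
\end{align*}
Since $\cp_\Om(x)\in\Om^{\ast}$ by Lemma \ref{bijectionlemma}, $e^{-2F/(n+1)} > 0$, and $\Om^{\ast}$ is a cone, $-du$ sends $\Om$ into $\Om^{\ast}$.

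For the bijectivity onto $\Om^{\ast}$ I would invoke homogeneity. By Theorem \ref{affinespheretheorem}\eqref{cylog}, $F$ is $(-(n+1))$-logarithmically homogeneous, so $u$ is positively homogeneous of degree $2$; consequently $-du$ is positively homogeneous of degree $1$, whereas $\cp_\Om$ is positively homogeneous of degree $-1$. Both maps therefore permute radial rays, and since they differ by the strictly positive radial factor $e^{-2F/(n+1)}$ they induce the \emph{same} bijection on the space of rays. Given $y\in\Om^{\ast}$, let $R$ be the ray through $y$, and let $L\subset\Om$ be the unique ray with $\cp_\Om(L)=R$ (unique by Lemma \ref{bijectionlemma}). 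By its degree-$1$ homogeneity the restriction $-du|_L$ is a positive linear parameterization of $R$, so there is a unique $x\in L$ with $-du(x)=y$. This establishes bijectivity $\Om\to\Om^{\ast}$.

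Finally, the differential of $-du$ at $x$ is $-k_{ij}(x)$ with $k_{ij} = \nabla_i du_j$. Applying \eqref{hpsif} to $\psi(r) = -\tfrac{n+1}{2}e^{-2r/(n+1)}$ (so $\ac(\psi) = -\tfrac{2}{n+1}$) and using $|dF|^2_g = n+1$ from \eqref{hompol} together with $\H(F) = e^{2F}$ from Theorem \ref{cytheorem} gives $\H(u) = \dot\psi^{n+1}(1-2)\H(F) = -1$, so $\det k_{ij}$ is nowhere zero and $-du$ is a local diffeomorphism. Combined with the bijectivity above, it is a global diffeomorphism $\Om\to\Om^{\ast}$. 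The only mildly delicate point is the ray argument in the third paragraph; no real obstacle is expected, since both key ingredients (Lemma \ref{bijectionlemma} and the identity $\H(u)=-1$) are already in hand.
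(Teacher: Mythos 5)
Your proposal is correct and follows essentially the same route as the paper: both reduce the bijectivity of $-du$ to that of $-dF$ from Lemma \ref{bijectionlemma} via the homogeneities of $F$ and $u$, and both obtain the local diffeomorphism property from $\H(u)=-1$ via \eqref{hpsif}. Your packaging of the homogeneity step as a statement about the induced map on rays (together with the observation that any preimage of $y$ under $-du$ must lie on the ray $L$, since $-du(x)$ is a positive multiple of $\cp_\Om(x)$) is just a geometric rephrasing of the paper's algebraic manipulation of the identities $F_i(e^tx)=e^{-t}F_i(x)$ and $u_i(e^tx)=e^tu_i(x)$.
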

\begin{proof}
If $x, \bar{x} \in \Om$, then $-\bar{x}^{p}u_{p}(x) = e^{-2F(x)/(n+1)}\bar{x}^{p}F_{p}(x) > 0$ by the proof of Lemma \ref{bijectionlemma}, so $-du(x) \in \Om^{\ast}$. By \eqref{hpsif} there holds $\H(u) = -1$, so $-du(x)$ is a local diffeomorphism, and to prove the lemma it suffices to show that $-du$ is a bijection. Since the components of $dF$ have homogeneity $-1$, if $du(x) = du(\bar{x})$ then $F_{i}(e^{2F(x)/(n+1)}x)= F_{i}(e^{2F(\bar{x})/(n+1)}\bar{x})$, and by the injectivity of $-dF$ this implies $\bar{x} = e^{2(F(x) - F(\bar{x}))/(n+1)}x$. That is there is $t \in \rea$ such that $\bar{x} = e^{t}x$. Since $u$ has homogeneity $2$, there results $u_{i}(x) = u_{i}(\bar{x}) = u_{i}(e^{t}x) = e^{t}u_{i}(x)$, so that $t = 0$ and $\bar{x} = x$, showing that $-du$ is injective. If $y \in \Om^{\ast}$ then by Lemma \ref{bijectionlemma} there is $\bar{x} \in \Om$ such that $y_{i} = -F_{i}(\bar{x})$. As $F$ maps each ray in $\Om$ diffeomorphically onto $\rea$ there is a unique $x$ on the ray generated by $\bar{x}$ such that $F(x) = - F(\bar{x})$. Since $F(e^{2F(x)/(n+1)}x) = F(x) - 2F(x) = -F(x) = F(\bar{x})$, $\bar{x} = e^{2F(x)/(n+1)}x$. Consequently, $y_{i} = - F_{i}(\bar{x}) = -F_{i}( e^{2F(x)/(n+1)}x = - e^{-2F(x)/(n+1)}F_{i}(x) = -u_{i}(x)$, showing that $-du$ is surjective.
\end{proof}

Endow the vector space $\sW = \rea^{n+1}\times \rea^{n+1\,\ast}$ with the flat para-K\"ahler structure generated by the symplectic form $\sOmega((x, y), (\bar{x}, \bar{y})) = \bar{x}^{p}y_{p} - x^{p}\bar{y}_{p}$ and the split signature metric $\sG((x, y), (\bar{x}, \bar{y})) = x^{p}\bar{y}_{p} + \bar{x}^{p}y_{p}$. The graph $\Ga_{\be, \Om}$ in $\sW$ of a one-form $\be$ defined on an open subset $\Om \subset \rea^{n+1}$ means the image of the associated smooth map $\overline{\be}:\Omega \to \sW$ defined by $\overline{\be}(x) = (x, \be_{x})$. It is Lagrangian if and only if $\be$ is closed. A submanifold of $\sW$ is \textbf{conical} if its intersection with any ray in $\sW$ is a connected open subinterval of the ray. The graph $\Ga_{\be, \Om}$ is conical is and only if $\be$ has homogeneity $2$ in the sense that $\lie_{\eul}\be = 2 \be$. There holds $\overline{\be}^{\ast}(\sG)_{ij} = 2\nabla_{(i}\be_{j)}$, so that the restriction of $\sG$ to $\Ga_{\be, \Om}$ is a pseudo-Riemannian metric if and only if $\nabla_{(i}\be_{j)}$ is nondegenerate, in which case $\be$ and $\Ga_{\be, \Om}$ are said to be \textbf{nondegenerate}. Lowering its last index using $\sOmega$ identifies the second fundamental form of a Lagrangian submanifold of $\sW$ with a completely symmetric covariant three tensor on the submanifold. A bit of calculation shows that for a closed one-form $\be$ the second fundamental form of $\Ga_{\be,\Om}$ is identified in this way with $\nabla_{i}\nabla_{j}\be_{k}$. It follows that the one form symplectically dual to the mean curvature vector field of a nondegenerate closed one-form is a constant multiple of $\nabla_{i}\log\det(\nabla\be)$, and so $\Ga_{\be, \om}$ has mean curvature zero if and only if $\det \nabla \be$ is $\nabla$ parallel. In particular, the graph of the differential $dv$ of a homogeneity $2$ function $v$ is a conical Lagrangian submanifold of $(\sW, \sOmega, \sG)$ that is nondegenerate if $\nabla_{i}dv_{j}$ is nondegenerate, and has moreover mean curvature zero if and only if $\H(v)$ is constant. Closely related constructions are described in section $5$ of \cite{Hitchin-speciallagrangian}.

\begin{theorem}\label{parakahlertheorem}
Let $F_{\Om}$ and $F_{\Om^{\ast}}$ be the canonical potentials of the proper open convex cone $\Om \subset \rea^{n+1}$ and its dual $\Om^{\ast} \subset \rea^{n+1\,\ast}$, and let $u_{\Om} = -((n+1)/2)e^{-2F_{\Om}/(n+1)}$ and $u_{\Om^{\ast}} = -((n+1)/2)e^{-2F_{\Om^{\ast}}/(n+1)}$. Then the graph $\Ga(-du_{\Om}, \Om)$ of $-du_{\Om}$ over $\Om$ and the graph $\Ga(-du_{\Om^{\ast}}, \Om^{\ast})$ of $-du_{\Om^{\ast}}$ over $\Om^{\ast}$ coincide and $\Ga(-du_{\Om}, \Om)= \Ga(-du_{\Om^{\ast}}, \Om^{\ast})$ is a mean curvature zero nondegenerate conical Lagrangian submanifold of the flat para-K\"ahler structure $(\sW, \sOmega, \sG)$ that projects diffeomorphically onto $\Om$ and $\Om^{\ast}$ under the canonical projections from $\sW$ onto its factors $\rea^{n+1}$ and $\rea^{n+1\,\ast}$.
\end{theorem}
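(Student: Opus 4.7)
My plan is to reduce the theorem to direct verifications using the general framework for conical Lagrangian graphs recorded in the paragraph preceding the statement, combined with the Legendre-type duality of the canonical potentials of dual cones from Theorem \ref{fdualitytheorem} and the bijectivity of Lemma \ref{ubijectionlemma}. Only the coincidence of the two graphs has any real content; the remaining claims follow mechanically once the intrinsic properties of $u_\Om$ are written down.

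First I would verify the properties of $u_\Om = -((n+1)/2)e^{-2F_\Om/(n+1)}$. By Theorem \ref{affinespheretheorem}, $F_\Om$ is $-(n+1)$-logarithmically homogeneous, so both $u_\Om$ and $-u_\Om$ are positively homogeneous of degree $2$. Applying \eqref{hpsif} to $\psi(r) = -((n+1)/2)e^{-2r/(n+1)}$, and substituting $\H(F_\Om) = e^{2F_\Om}$ and $|dF_\Om|_g^2 = n+1$ from \eqref{hompol}, I would compute $\H(u_\Om) = -1$. Consequently $\H(-u_\Om) = (-1)^{n+1}\H(u_\Om) = (-1)^n$ is a nonzero constant, so $\nabla d(-u_\Om)$ is everywhere nondegenerate and $\det \nabla d(-u_\Om)$ is $\nabla$-parallel. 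The framework preceding the theorem then immediately implies that $\Ga(-du_\Om, \Om)$ is a conical, nondegenerate, mean curvature zero Lagrangian submanifold of $(\sW, \sOmega, \sG)$. Its first projection $\sW \to \rea^{n+1}$ restricts to the identity $\Om \to \Om$, and its second projection $\sW \to \rea^{n+1\,*}$ restricts to $x \mapsto -du_\Om(x)$, which is a diffeomorphism onto $\Om^*$ by Lemma \ref{ubijectionlemma}.

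It remains to show the coincidence $\Ga(-du_\Om, \Om) = \Ga(-du_{\Om^*}, \Om^*)$ inside $\sW$. Writing the two graphs as $\{(x, -du_\Om(x)) : x \in \Om\}$ and $\{(-du_{\Om^*}(y), y) : y \in \Om^*\}$, this coincidence is equivalent to the mutual inversion of the diffeomorphisms $-du_\Om:\Om \to \Om^*$ and $-du_{\Om^*}:\Om^* \to \Om$. Direct differentiation of $u_\Om$ yields the identity $-du_\Om(x) = e^{-2F_\Om(x)/(n+1)}\cp_\Om(x)$, where $\cp_\Om(x) = -dF_\Om(x)$ is the map of Lemma \ref{bijectionlemma}. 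Given $x \in \Om$ I would set $\la = e^{-2F_\Om(x)/(n+1)}$ and $y = \la\, \cp_\Om(x) = -du_\Om(x) \in \Om^*$. The $-(n+1)$-logarithmic homogeneity of $F_{\Om^*}$ together with $F_{\Om^*}(\cp_\Om(x)) = -F_\Om(x)$, which is \eqref{cf2} of Theorem \ref{fdualitytheorem}, give $F_{\Om^*}(y) = F_{\Om^*}(\cp_\Om(x)) - (n+1)\log\la = -F_\Om(x) + 2F_\Om(x) = F_\Om(x)$. The homogeneity $-1$ of $\cp_{\Om^*}$ in its argument, which follows from the log-homogeneity of $F_{\Om^*}$, combined with $\cp_{\Om^*}\circ \cp_\Om = \Id_\Om$ from \eqref{cf3}, gives $\cp_{\Om^*}(y) = \la^{-1}\cp_{\Om^*}(\cp_\Om(x)) = e^{2F_\Om(x)/(n+1)}\, x$. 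Substituting both into $-du_{\Om^*}(y) = e^{-2F_{\Om^*}(y)/(n+1)}\cp_{\Om^*}(y)$ yields $-du_{\Om^*}(-du_\Om(x)) = x$, completing the proof of coincidence.

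The main obstacle is this last step: although each ingredient is either elementary or already recorded in Theorem \ref{fdualitytheorem}, one must track carefully how $\cp_{\Om^*}$ and $F_{\Om^*}$ rescale along the ray through $\cp_\Om(x)$ generated by the scalar factor $e^{-2F_\Om(x)/(n+1)}$. Every other assertion follows from the framework preceding the statement applied to $u_\Om$.
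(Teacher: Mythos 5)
Your proof is correct and follows essentially the same route as the paper: the framework paragraph plus Lemma \ref{ubijectionlemma} disposes of everything except the coincidence of the two graphs, which is then extracted from the duality in Theorem \ref{fdualitytheorem}. The only difference is in the last step, where the paper proves the identity of potentials $u_{\Om^{\ast}}(y) = u_{\Om}^{\ast}(-y)$ and invokes the standard Legendre-transform description of the graph, whereas you verify the differentiated form of that identity, namely that $-du_{\Om}$ and $-du_{\Om^{\ast}}$ are mutually inverse, by a direct pointwise computation using \eqref{cf2}, \eqref{cf3}, and logarithmic homogeneity; both computations check out.
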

\begin{proof}
 Combining Lemma \ref{ubijectionlemma} and the discussion preceeding the statement of the theorem proves everything except for the equality $\Ga(-du_{\Om}, \Om)= \Ga(-du_{\Om^{\ast}}, \Om^{\ast})$. Since for the Legendre transform $u_{\Om}^{\ast}$ the graph over $-\Om^{\ast}$ of $du^{\ast}_{\Om}$ equals $\Ga(-du_{\Om}, \Om)$, it suffices to show that $u_{\Om^{\ast}}(y) = u_{\Om}^{\ast}(-y)$. Since $u_{\Om}$ has homogeneity $2$, so too has $u_{\Om}^{\ast}$ homogeneity $2$ and so by definition of the Legendre transform and the homogeneity of $u_{\Om}^{\ast}$ there holds
\begin{align}\label{uo1}
\begin{split}
\tfrac{(n+1)^{2}}{4} & = \tfrac{(n+1)^{2}}{4}u_{\Om}(x)^{-1}u_{\Om}^{\ast}(du_{\Om}(x))
= u_{\Om}(x)u_{\Om}^{\ast}(dF_{\Om}(x)).
\end{split}
\end{align}
On the other hand, by \eqref{fomegaast} and the definition of the Legendre transform there holds
\begin{align}\label{uo2}
\begin{split}
u_{\Om}(x)u_{\Om^{\ast}}(-dF_{\Om}(x)) & = \tfrac{(n+1)^{2}}{4}e^{-2(F_{\Om}(x) + F_{\Om^{\ast}}(-dF_{\Om}(x)))/(n+1)} 
= \tfrac{(n+1)^{2}}{4}.
\end{split}
\end{align}
Comparing \eqref{uo1} and \eqref{uo2} shows $u_{\Om^{\ast}}(y) = u_{\Om}^{\ast}(-y)$ and completes the proof.
\end{proof}

A Hamiltonian action of the group $\reat \times \reat$ on $\sW$ is generated by ordinary dilations of $\sW$ and the product of dilation on $\rea^{n+1}$ with the contragredient dilation on $\rea^{n+1\,\ast}$. Symplectic reduction gives rise to a para-K\"ahler manifold of constant para-holomorphic sectional structure on either connected component of the complement of the canonical incidence correspondence in $\projp(\rea^{n+1})\times \projp(\rea^{n+1\,\ast})$ in the same manner that symplectic reduction gives rise to the Fubini-Study metric on complex projective space. Moreover, it can be shown that mean curvature zero nondegenerate conical Lagrangian submanifolds of $\sW$ descend to mean curvature zero Lagrangian submanifolds of these para-K\"ahler space forms in the same way that mean curvature zero conical Lagrangian submanifolds of complex Euclidean space descend to mean curvature zero Lagrangian submanifolds of complex projective space (a succinct review of the latter construction can be found in section $2$ of \cite{Mcintosh}). Since it would require considerable space to record the straightforward details in full, they are omitted.

\section{Applications to K\"ahler affine and Monge-Amp\`ere metrics}
In this section there are derived some general results about K\"ahler affine metrics involving conditions on the K\"ahler affine Ricci and scalar curvatures and utilizing Theorem \ref{cyestimatetheorem}.

\subsection{}
Associated to a K\"ahler affine metric $(\nabla, g)$ is the \textbf{dual} K\"ahler affine metric $(\bnabla, g)$ with the same underlying metric and the flat affine connection $\bnabla$ defined by $\bnabla = \nabla + F_{ij}\,^{k}$. That $\bnabla$ is flat follows straightforwardly from \eqref{hessiancurvature}, while that $(\bnabla, g)$ is K\"ahler affine follows from $\bar{F}_{ijk} = \bnabla_{i}g_{jk} = - F_{ijk}$. Here, as in what follows, a bar indicates an object associated to $(\bnabla, g)$. It follows that the Koszul forms are related by $\bar{H}_{i} = \bar{F}_{ip}\,^{p} = -F_{ip}\,^{p} = H_{i}$. If $\Psi$ is a parallel volume form for $\nabla$ and $F$ is a local potential for $(\nabla, g)$ then $\bar{\Psi} = \H(F)\Psi$ is $\bnabla$-parallel. Similarly, $\bnabla_{i}F^{j} = \delta_{i}\,^{j}$. It follows that if there is a vector field $\eul^{i}$ such that $\nabla_{i}\eul^{j} = \delta_{i}\,^{j}$ then $\bar{F} = n+1 + \eul^{p}F_{p} - F$ is a local potential for $(\bnabla, g)$. 

Let $\Om$ be a proper open convex cone with dual $\Om^{\ast}$. Combining the preceeding observations with Theorem \ref{fdualitytheorem} shows that the pullback of the canonical K\"ahler affine structure on $\Om^{\ast}$ via the map $\cp_{\Om}$ is the K\"ahler affine structure dual to the canonical K\"ahler affine structure on $\Om$. In particular, the pullback of the canonical potential of $\Om^{\ast}$ is $\bar{F}_{\Om} = n+1 + \eul^{p}F_{\Om\,p} - F_{\Om}$. That is, $\bar{F}_{\Om}$ is essentially the pullback of the Legendre transform of $F_{\Om}$ (transported to $\Om^{\ast}$). This last statement is made precise by the following lemma.

\begin{lemma}\label{duallegendrelemma}
Let $\Om$ be a domain in $\rea^{n+1}$ and let $F \in C^{3}(\Om)$ be a convex function such that the Legendre transform of $F$ is a diffeomorphism onto a domain $D \subset \rea^{n+1\,\ast}$. Let $\nabla$ be a flat affine connection on $\Om$, let $x^{i}$ be coordinates on $\Om$ such that $\nabla dx^{i} = 0$, let $y_{i} = F_{i}(x)$ be coordinates on $D$, and write $\hnabla$ for a flat affine connection on $D$ such that $\hnabla dy_{i} = 0$. Then the pullback of $\nabla$ via the Legendre transform of $F$ is the connection dual to $\nabla$ with respect to the Hessian metric $\nabla dF$.
\end{lemma}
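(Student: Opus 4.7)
The plan is to compute the Christoffel symbols of the pullback connection $L_{F}^{\ast}\hnabla$ directly in the affine coordinates $x^{i}$, where $L_{F}:\Om \to D$ is the Legendre transform $x \mapsto dF(x) = (F_{1}(x), \dots, F_{n+1}(x))$, and to observe that they agree with $F_{ij}{}^{k}$, so that $L_{F}^{\ast}\hnabla = \nabla + F_{ij}{}^{k} = \bnabla$, which is the connection dual to $\nabla$ with respect to $g_{ij} = \nabla_{i}dF_{j}$ recalled at the start of this section.

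First I would invoke the naturality of the pullback: since $L_{F}$ is by hypothesis a diffeomorphism, $L_{F}^{\ast}\hnabla$ is a well-defined flat affine connection on $\Om$ and satisfies $(L_{F}^{\ast}\hnabla)(L_{F}^{\ast}\alpha) = L_{F}^{\ast}(\hnabla \alpha)$ for any tensor $\alpha$ on $D$. Applying this to the coordinate differentials $dy_{i}$, which satisfy $\hnabla dy_{i}=0$ by assumption, yields that the pulled-back one-forms
\[
L_{F}^{\ast}dy_{i} = d(y_{i}\circ L_{F}) = dF_{i} = F_{ij}\,dx^{j}
\]
are parallel with respect to $L_{F}^{\ast}\hnabla$.

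Second I would unpack this parallelism in coordinates. Writing the Christoffel symbols of $L_{F}^{\ast}\hnabla$ in the $x^{i}$-coordinates as $\tilde{\Gamma}^{k}_{ij}$, the condition that $F_{ij}\,dx^{j}$ is parallel gives $F_{ikl} = F_{ij}\tilde{\Gamma}^{j}_{kl}$. Since $F$ is strictly convex (as $L_{F}$ is a diffeomorphism), the Hessian $F_{ij}$ is invertible, and contracting with $F^{mi}$ yields $\tilde{\Gamma}^{m}_{kl} = F_{kl}{}^{m}$. The Christoffel symbols of $\nabla$ vanish in these coordinates by choice of $x^{i}$, so this is precisely the coordinate form of $L_{F}^{\ast}\hnabla = \nabla + F_{ij}{}^{k}$.

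Finally, to match the wording of the lemma, I would note that $\bnabla = \nabla + F_{ij}{}^{k}$ is by construction the connection dual to $\nabla$ with respect to $g_{ij}$: since $D = \nabla + \tfrac{1}{2}F_{ij}{}^{k}$ is the Levi-Civita connection of $g$, as noted earlier in the paper, one has $\nabla + \bnabla = 2D$, which is the defining symmetry of dual connections. Thus $L_{F}^{\ast}\hnabla$ coincides with this dual connection. The computation itself is routine; the only real point is the naturality of parallelism under pullback, and the only bookkeeping care needed is with the positions of indices.
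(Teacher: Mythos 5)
Your proof is correct. The paper itself gives no argument here—it states only that the lemma is a ``straightforward, though notationally awkward, computation... left to the reader''—so your proposal supplies the omitted computation rather than paralleling an existing one. Your route is a clean way to organize it: instead of transforming Christoffel symbols under the change of coordinates $y_{i} = F_{i}(x)$ by brute force, you apply naturality of the pullback connection to the $\hnabla$-parallel coframe $dy_{i}$, obtaining that $F_{ij}\,dx^{j}$ is parallel for $L_{F}^{\ast}\hnabla$ and reading off $\tilde{\Gamma}^{m}_{kl} = F^{mi}F_{ikl} = F_{kl}\,^{m}$ from the invertibility of the Hessian; this sidesteps exactly the notational awkwardness the author alludes to. (Note that the lemma's phrase ``the pullback of $\nabla$'' must be read as the pullback of $\hnabla$, as you did; this is consistent with how the lemma is invoked later for the toric potential $G$.) One small point of precision: the identity $\nabla + \bnabla = 2D$ alone does not characterize the dual connection; one also needs the lowered difference tensor to be symmetric in its last two indices, i.e. $g(F(X,Y),Z) = g(Y,F(X,Z))$, which here follows from the total symmetry of $F_{ijk}$. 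With that remark added, the identification of $\nabla + F_{ij}\,^{k}$ with the $g$-dual connection of $\nabla$ is complete, and the argument is sound.
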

\begin{proof}
This straightforward, though notationally awkward, computation, is left to the reader.
\end{proof}

Since $\bar{\ak}_{ij} = -\bnabla_{i}\bar{H}_{j} = \bnabla_{i}H_{j} = -\ak_{ij} - F_{ijp}H^{p}$, the K\"ahler affine Ricci and scalar curvatures of $(\nabla, g)$ and $(\bnabla, g)$ are related by
\begin{align}\label{akplusakbar}
&\ak_{ij} + \bar{\ak}_{ij} = - F_{ijp}H^{p},& &\aks + \bar{\aks} = -|H|^{2}_{g}.
\end{align}
On the other hand, writing $H^{\sharp}$ for the vector field $H^{i}$ and combining $(\lie_{H^{\sharp}}g)_{ij} = 2D_{(i}H_{j)} = - 2\ak_{ij} - F_{ijp}H^{p}$ with \eqref{akplusakbar} yields
\begin{align}\label{akakbardiff}
\bar{\ak}_{ij} - \ak_{ij} = (\lie_{H^{\sharp}}g)_{ij} = 2D_{i}H_{j}.
\end{align}
In particular, the vector field $H^{i}$ is $g$-Killing if and only if the K\"ahler affine Ricci tensor of $(\nabla, g)$ agrees with that of the dual K\"ahler affine structure.

\subsection{}
A flat affine manifold is \textbf{convex} if it is a quotient of a convex domain in flat affine space by a free and properly discontinuous action of a discrete group of affine transformations. It is moreover \textbf{properly convex} if the convex domain is proper. Theorem $2.1$ of H. Shima and K. Yagi's \cite{Shima-Yagi} shows that the affine developing map of a complete K\"ahler affine manifold is an affine diffeomorphism onto a convex domain in flat affine space. That is, the affine structure underlying a complete K\"ahler affine structure is convex.

An interesting question is what conditions characterize those complete K\"ahler affine manifolds for which the underlying affine structure is properly convex. In \cite{Koszul-deformations}, J.~L. Koszul showed that a compact flat affine manifold $(M, \nabla)$ is properly convex if and only if $M$ admits a closed one-form $\al$ such that $\nabla_{i}\al_{j} > 0$. A proper convex domain is said to be \textbf{divisible} if it admits a compact quotient by a discrete group of affine transformations. By Theorem $4$ of J. Vey's \cite{Vey}, a divisible proper convex domain is a cone. In particular the theorems of Koszul and Vey imply that the universal cover of a compact K\"ahler affine manifold with negative K\"ahler affine Ricci curvature is a proper convex cone. 

The Euclidean metric is Hessian, so any Euclidean manifold gives an example of a (flat) K\"ahler affine manifold with universal cover equal to the full affine space. Corollary $2.3$ of \cite{Cheng-Yau-realmongeampere} shows the related result that if a compact K\"ahler affine manifold $(M, g, \nabla)$ admits a $\nabla$-parallel volume form then there is $F \in \cinf(M)$ such that $g_{ij} + \nabla_{i}dF_{j}$ is a (flat) Riemannian metric with Levi-Civita connection $\nabla$. In particular, since on a compact manifold a Levi-Civita connection is complete, this means $\nabla$ is complete, and so the universal cover of $M$ is the full flat affine space. Combining this theorem of Koszul and Vey with this result of Cheng and Yau yields:
\begin{theorem}
On a compact manifold a flat affine structure admitting a parallel volume form is not properly convex.
\end{theorem}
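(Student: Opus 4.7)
The plan is to argue by contradiction, simply assembling the three theorems cited immediately before the statement (Koszul's convexity criterion, Vey's theorem on divisible proper convex domains, and Cheng-Yau's Corollary $2.3$), together with Shima-Yagi's completeness theorem. Suppose $(M, \nabla)$ is a compact flat affine manifold that is properly convex and admits a $\nabla$-parallel volume form $\Psi$.

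First I would use Koszul's criterion to upgrade the properly convex flat affine structure to a K\"ahler affine structure. Since $(M, \nabla)$ is compact and properly convex, Koszul's theorem (recalled just above) produces a closed one-form $\alpha$ on $M$ such that $g_{ij} := \nabla_i \alpha_j$ is positive definite. Because $\alpha$ is closed, it is locally exact, so locally $\alpha = dF$ and $g_{ij} = \nabla_i dF_j$ is a Riemannian Hessian metric; globally $(\nabla, g)$ is a K\"ahler affine structure on the compact manifold $M$.

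Next I would invoke the cited Corollary $2.3$ of \cite{Cheng-Yau-realmongeampere}: since $(M, g, \nabla)$ is a compact K\"ahler affine manifold carrying a $\nabla$-parallel volume form, there exists $F \in \cinf(M)$ so that $g_{ij} + \nabla_i dF_j$ is a flat Riemannian metric whose Levi-Civita connection is $\nabla$. As noted in the preceding paragraph of the excerpt, compactness forces this Levi-Civita connection (hence $\nabla$) to be complete, and then by Shima-Yagi the affine developing map is an affine diffeomorphism of the universal cover $\tilde{M}$ onto a convex domain; completeness forces that domain to be the full flat affine space $\rea^{n+1}$.

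Finally I would derive a contradiction from Vey's theorem. Since $M$ is compact and properly convex, $\tilde{M}$ is realized as a proper convex domain in $\rea^{n+1}$ that is divisible (by the deck transformation action), so by Vey's theorem $\tilde{M}$ must be a proper convex cone. But a proper convex cone, by the definition recalled in the introduction, has closure containing no complete affine line, whereas $\rea^{n+1}$ certainly does; this contradicts the identification $\tilde{M} = \rea^{n+1}$ obtained in the previous step. There is no real obstacle here, as the proof is an assembly of quoted results; the only point requiring a moment's care is verifying that the one-form supplied by Koszul's theorem is genuinely the local data of a K\"ahler affine metric so that Cheng-Yau's result is applicable, and this is immediate from the closedness of $\alpha$.
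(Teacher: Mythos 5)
Your proof is correct, and its skeleton coincides with the paper's: combine the Koszul--Vey theorem with the Cheng--Yau/Shima--Yagi completeness results to force the universal cover to be simultaneously a proper convex cone and all of affine space. The one place you diverge is in how the K\"ahler affine structure on $M$ (needed to invoke Corollary $2.3$ of \cite{Cheng-Yau-realmongeampere}) is produced: you take the closed one-form $\al$ with $\nabla_{i}\al_{j} > 0$ supplied by the forward direction of Koszul's biconditional, whereas the paper descends the canonical potential $F_{\Om}$ of the covering cone to $M$, using its equivariance under $\Aut(\Om)$ from Theorem \ref{cytheorem}\eqref{cyhom}. Your route is slightly more economical, since it does not require the existence half of the Cheng--Yau theorem (the solution of the Monge--Amp\`ere equation on the cone), only Koszul's theorem itself; the paper's route has the small advantage of exhibiting the descended metric as K\"ahler affine Einstein, which is not needed for this statement. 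One cosmetic remark: the appeal to Shima--Yagi is not really necessary, since once $\nabla$ is known to be complete the developing map is already an affine diffeomorphism of the universal cover onto $\rea^{n+1}$, which contradicts Vey's conclusion that the cover is a proper convex cone exactly as you say.
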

\begin{proof}
By the theorem of Koszul and Vey, a properly convex affine structure on a compact manifold $M$ has as its universal cover a proper convex cone, so is incomplete. The canonical potential of this cone is the potential of a negative scalar curvature K\"ahler affine Einstein metric which is invariant under the action of the holonomy representation of $M$, and so descends to give a K\"ahler affine Einstein metric on $M$. On the other hand, if the flat affine structure underlying a K\"ahler affine metric on a compact manifold admits a parallel volume form, then it is complete, a contradiction.
\end{proof}
It is an interesting question whether a K\"ahler affine structure for which the underlying affine connection is complete must admit a parallel volume form. This is a version of the Markus conjecture for K\"ahler affine manifolds.

\subsection{}
Let $(\nabla, g)$ be a K\"ahler affine metric and let $D$ be the Levi-Civita connection of $g$. Since $\nabla_{[i}\ak_{j]k} = -\nabla_{[i}\nabla_{j]}H_{k} = 0$, $\nabla_{i}\ak_{jk}$ is completely symmetric, and so $2D_{[i}\ak_{j]k} = -F_{k[i}\,^{p}\ak_{j]p}$. Tracing this in $jk$ yields
\begin{align}\label{dak}
d\aks_{i} = D_{p}\ak_{i}\,^{p} -  \tfrac{1}{2}F_{i}\,^{pq}\ak_{pq} + \tfrac{1}{2}H^{p}\ak_{ip} = g_{ia}g^{bc}\nabla_{b}\ak_{c}\,^{a}.
\end{align}
From \eqref{dak} there results $g^{pq}\nabla_{p}((n+1)\ak_{q}\,^{i} - \aks \delta_{q}\,^{i}) = ng^{ip}d\aks_{p}$, from which there follows:
\begin{lemma}\label{keakslemma}
The K\"ahler affine scalar curvature of a K\"ahler affine Einstein structure is constant. 
\end{lemma}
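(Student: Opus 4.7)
The plan is to apply the contracted Bianchi-type identity
\[
g^{pq}\nabla_{p}\bigl((n+1)\ak_{q}{}^{i} - \aks\,\delta_{q}{}^{i}\bigr) = n\,g^{ip}d\aks_{p},
\]
derived just before the statement of the lemma, to the Einstein case. First I would trace the Einstein condition $\ak_{ij}=\lambda g_{ij}$ with $g^{ij}$ to obtain $\aks=(n+1)\lambda$, whence $\ak_{q}{}^{i}=\tfrac{\aks}{n+1}\,\delta_{q}{}^{i}$ pointwise. Consequently $(n+1)\ak_{q}{}^{i}-\aks\,\delta_{q}{}^{i}\equiv 0$ identically on $M$, the left-hand side of the displayed identity vanishes, and one is left with $n\,g^{ip}d\aks_{p}=0$. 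Since $g^{ij}$ is nondegenerate and $n\ge 1$, this forces $d\aks=0$, so $\aks$ is locally constant, hence constant on each connected component of $M$.

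The only point requiring justification, were the identity not already in hand, is its derivation, which rests entirely on the fact that $\nabla_{i}\ak_{jk}$ is totally symmetric. This symmetry is immediate from $\ak_{jk}=-\nabla_{j}H_{k}$ together with the flatness of $\nabla$, which gives $\nabla_{[i}\ak_{j]k}=-\nabla_{[i}\nabla_{j]}H_{k}=0$. Converting to the Levi-Civita connection $D=\nabla+\tfrac{1}{2}F_{ij}{}^{k}$ and tracing produces \eqref{dak}, and raising an index and tracing once more yields the displayed identity. There is no genuine obstacle: the lemma is essentially a one-line corollary of the second Bianchi identity for $\ak_{ij}$ under the substitution of the Einstein ansatz.
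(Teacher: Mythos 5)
Your argument is correct and is exactly the paper's proof: the identity $g^{pq}\nabla_{p}((n+1)\ak_{q}\,^{i} - \aks \delta_{q}\,^{i}) = ng^{ip}d\aks_{p}$ is derived immediately before the lemma from the total symmetry of $\nabla_{i}\ak_{jk}$ (itself a consequence of the flatness of $\nabla$ and $\ak_{jk}=-\nabla_{j}H_{k}$), and the Einstein ansatz makes the tensor inside the divergence vanish identically, forcing $d\aks=0$. Nothing to add.
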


For $A \in \cinf(M)$, a straightforward calculation using the classical Bochner formula
\begin{align}\label{bochner}
\lap_{g}|dA|_{g}^{2} = 2|DdA|_{g}^{2} + 2A^{p}d_{p}\lap_{g}A + 2A^{i}A^{j}R_{ij},
\end{align}
shows that
\begin{align}\label{bbochner}
\blap_{g}|dA|_{g}^{2} = 2|DdA|_{g}^{2} + 2A^{p}d_{p}\blap_{g}A + 2A^{i}A^{j}\bric_{ij}.
\end{align}
Where $u > 0$ there holds $\blap_{g}u^{\la} = \la u^{\la-1}(\blap_{g}u + (\la - 1)u^{-1}|du|_{g}^{2})$. Taking $u = |dA|^{2}_{g}$ and $\la = 1/2$ and using \eqref{bbochner} yields that wherever $|dA|^{2}_{g}$ is not zero there holds
\begin{align}\label{bbochner2}
\begin{split}
|dA|_{g}\blap_{g}|dA|_{g} &= |DdA|_{g}^{2}  - |d|dA|_{g}|_{g}^{2} + A^{p}d_{p}\blap_{g}A + A^{i}A^{j}\bric_{ij} \\
&\geq  A^{p}d_{p}\blap_{g}A + \tfrac{1}{4(n+1)}(A^{p}H_{p})^{2} + \tfrac{1}{2}A^{i}A^{j}\ak_{ij},
\end{split}
\end{align}
the final inequality by the Kato inequality, \eqref{nricdefined}, and \eqref{nr1}. 

Let $\si_{ij} = D_{i}H_{j} - \tfrac{1}{n+1}D^{p}H_{p}g_{ij}$. By \eqref{aks},
\begin{align}\label{bb2}
|DH|^{2}_{g} = |\si|^{2}_{g} + \tfrac{1}{n+1}(\aks + \tfrac{1}{2}|H|^{2}_{g})^{2}.
\end{align}
Taking $A = \log\H(F)$ in \eqref{bbochner} and using \eqref{aks}, \eqref{bb2}, \eqref{nricdefined}, and \eqref{nr1} yields
\begin{align}\label{sz}
\begin{split}
\blap_{g}|H|_{g}^{2} &= 2|\si|^{2}_{g} - 2H^{p}d_{p}\aks +  \tfrac{1}{2(n+1)}(2\aks + |H|^{2}_{g})^{2}  + 2H^{i}H^{j}\bric_{ij}\\
& \geq  2|\si|^{2}_{g} - 2H^{p}d_{p}\aks +  \tfrac{1}{n+1}(|H|^{4}_{g} + 2\aks |H|^{2}_{g} + 2\aks^{2}) + H^{i}H^{j}\ak_{ij}.
\end{split}
\end{align}
Taking $A = \log\H(F)$ in \eqref{bbochner2} and using \eqref{aks} yields that wherever $|H|_{g} \neq 0$ there holds
\begin{align}\label{sz0}
\begin{split}
|H|_{g}\blap_{g}|H|_{g} &\geq  -H^{p}d_{p}\aks + \tfrac{1}{4(n+1)}|H|^{4}_{g} + \tfrac{1}{2}H^{i}H^{j}\ak_{ij}.
\end{split}
\end{align}

\begin{theorem}\label{scalartheorem}
Let $\nabla$ be a flat affine connection on an $(n+1)$-dimensional manifold $M$ and let $g_{ij}$ be a complete Riemannian signature K\"ahler affine metric on $M$. If the K\"ahler affine Ricci curvature is bounded from below, $\ak_{ij} \geq -2cg_{ij}$, for some $c \geq 0$, and the K\"ahler affine scalar curvature $\aks$ is constant, then $|H|^{2}_{g} \leq 4(n+1)c$ and $-2(n+1)c \leq \aks \leq (\sqrt{2} - 1)(n+1)c/2$. In particular, the following are equivalent
\begin{enumerate}
\item\label{kas1} The K\"ahler affine Ricci curvature $\ak_{ij}$ is nonnegative and $\aks$ is constant.
\item\label{kas2}The K\"ahler affine Ricci curvature $\ak_{ij}$ vanishes.
\item\label{kas3} The metric $g$ is flat and $\nabla$ is its Levi-Civita connection.
\end{enumerate}
\end{theorem}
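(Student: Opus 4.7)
The plan is to combine the Bochner-type identities \eqref{sz0} and \eqref{sz} with the Cheng-Yau gradient estimate of Theorem \ref{cyestimatetheorem} and the Omori-Yau maximum principle, and then to extract the equivalences by specializing the quantitative bounds to $c=0$. For the bound $|H|^2_g \leq 4(n+1)c$ I would start from \eqref{sz0}: the constancy of $\aks$ annihilates the term $H^p d_p \aks$, while $\ak_{ij} \geq -2c g_{ij}$ gives $\tfrac{1}{2}H^iH^j\ak_{ij} \geq -c|H|^2_g$, so wherever $|H|_g > 0$ there holds $\blap_g |H|_g \geq \tfrac{1}{4(n+1)}|H|^3_g - c|H|_g$. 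By \eqref{cd1} the $2(n+1)$-dimensional Bakry-Emery Ricci tensor satisfies $\nric_{ij} \geq \tfrac{1}{2}\ak_{ij} \geq -c g_{ij}$ with $N = n+1$, which supplies the curvature hypothesis of Theorem \ref{cyestimatetheorem}. Applying that theorem to $u = |H|_g$ with $\sigma = 2$, $A = c$, $B = \tfrac{1}{4(n+1)}$ yields $\sup|H|_g \leq (A/B)^{1/2} = 2\sqrt{(n+1)c}$.

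Next, the lower bound $\aks \geq -2(n+1)c$ is immediate by tracing $\ak_{ij} \geq -2c g_{ij}$. For the upper bound on $\aks$, I would feed $d\aks = 0$, $\ak_{ij} \geq -2c g_{ij}$, and the Kato-type consequence $H^iH^jF_{ip}{}^qF_{jq}{}^p \geq |H|^4_g/(n+1)$ of \eqref{nr1} into \eqref{sz} to obtain
\begin{align*}
\blap_g |H|^2_g \geq \tfrac{1}{n+1}\bigl(|H|^4_g + 2\aks|H|^2_g + 2\aks^2\bigr) - 2c|H|^2_g.
\end{align*}
Since $\bric$ is bounded below on the complete manifold, the Omori-Yau maximum principle for $\blap_g$ applies to the bounded function $|H|^2_g$: along a suitable sequence $|H|^2_g \to M_0 := \sup|H|^2_g$ and $\blap_g|H|^2_g \to 0$. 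When $M_0 > 0$, dividing by $M_0$ and applying AM-GM in the form $M_0 + 2\aks^2/M_0 \geq 2\sqrt{2}\,|\aks|$ gives $\sqrt{2}\,|\aks| + \aks \leq c(n+1)$, from which the claimed upper bound on $\aks$ follows in the regime $\aks \geq 0$. When $M_0 = 0$ one has $H \equiv 0$ and then $\aks = -D^pH_p - \tfrac{1}{2}|H|^2_g = 0$ directly.

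The implication (3) $\Rightarrow$ (2) is immediate since $F_{ij}{}^k = 0$ forces $F_{ijk} = 0$, so $H \equiv 0$ and $\ak_{ij} = -\nabla_i H_j = 0$, and (2) $\Rightarrow$ (1) is tautological. For (1) $\Rightarrow$ (2), setting $c = 0$ in the preceding bound yields $H \equiv 0$ and hence $\ak_{ij} = -\nabla_i H_j = 0$. The more delicate direction (2) $\Rightarrow$ (3) likewise gives $H \equiv 0$ and $\aks \equiv 0$, and \eqref{hessiancurvature} collapses to $R_{ij} = \tfrac{1}{4}F_{ip}{}^qF_{jq}{}^p \geq 0$ and $R_g = \tfrac{1}{4}|F_{ijk}|^2_g$; to conclude $F_{ijk} \equiv 0$ I would derive a further Bochner-type identity for $|F_{ijk}|^2_g$ exploiting the total $\nabla$-symmetry of $F_{ijkl}$, or alternatively combine the Shima-Yagi developing map theorem for complete K\"ahler affine manifolds with the rigidity of complete affine Monge-Amp\`ere metrics (J\"orgens-Calabi-Pogorelov).

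The hard part will be this last direction: the first-order Bochner analysis on $|H|_g$ degenerates once $H \equiv 0$, and forcing the vanishing of the entire cubic tensor $F_{ijk}$ from the pointwise nonnegativity of $R_{ij}$ requires either a higher-order Bochner estimate (complicated by the fact that $\nabla F$ is totally $\nabla$-symmetric but not $D$-parallel, so the usual commutation formulas are contaminated by $F^2$ terms) or a global rigidity statement on complete K\"ahler affine Monge-Amp\`ere structures.
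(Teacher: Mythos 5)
Your argument follows the paper's own proof essentially step for step: the bound $|H|^{2}_{g}\leq 4(n+1)c$ via \eqref{sz0} and Theorem \ref{cyestimatetheorem}, the upper bound on $\aks$ via \eqref{sz} and an Omori--Yau argument (the paper applies the ordinary Omori--Yau principle, using \eqref{xxric3} to bound the Ricci curvature from below once $|H|^{2}_{g}$ is known to be bounded, and absorbs the drift term by hand via $|du(x_{k})|_{g}<k^{-1}$; your invocation of a drift Omori--Yau principle under the $N$-Bakry--Emery bound \eqref{cd1} is an acceptable variant), and the equivalences by specializing to $c=0$. For the one step you hedge on, \eqref{kas2}$\implies$\eqref{kas3}, the second route you name is exactly the paper's: pass to the universal cover, apply Shima--Yagi to realize it as a convex domain carrying a global potential $F$ with $\H(F)$ constant, and invoke the corollary to the main theorem of \cite{Calabi-improper} to conclude that $F$ is a quadratic polynomial on all of affine space, whence $F_{ijk}=0$; you should commit to this route rather than to a higher-order Bochner identity for $|F_{ijk}|^{2}_{g}$, which the paper does not attempt. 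One small caveat: your computation, like the paper's own, yields $\aks\leq(\sqrt{2}-1)(n+1)c$ rather than the $(\sqrt{2}-1)(n+1)c/2$ appearing in the statement; that extra factor of $1/2$ is an internal discrepancy of the paper, not something either argument actually delivers.
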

\begin{proof}
Suppose that $\ak_{ij} \geq -2cg_{ij}$ and $\aks$ is constant. Then \eqref{sz0} reduces to $\blap_{g}|H|_{g} \geq \tfrac{1}{4(n+1)}|H|^{3}_{g} - c|H|_{g}$ wherever $|H|_{g}$ is not zero, which, by Theorem \ref{cyestimatetheorem}, yields the bound $|H|^{2}_{g} \leq 4(n+1)c$. Hence, if $\ak_{ij} \geq 0$, then $H_{i} = 0$. By passing to the universal cover of $M$ it suffices to prove \eqref{kas1}$\implies$\eqref{kas3} in the case $M$ is simply connected. In this case, by the theorem of Shima-Yagi, $M$ can be identified with a convex domain in flat affine space and there exists a global potential $F \in \cinf(M)$ such that $g_{ij} = \nabla_{i}dF_{j}$ and $\H(F)$ is constant. By the corollary to the main theorem of \cite{Calabi-improper} in this case $F$ is a quadratic polynomial defined on all of affine space, so $F_{ijk}$ vanishes, and $\nabla$ is the Levi-Civita connection of $g$. This proves that \eqref{kas1} implies \eqref{kas3}. As obviously \eqref{kas3}$\implies$\eqref{kas2}$\implies$\eqref{kas1}, this proves the equivalence of \eqref{kas1}-\eqref{kas3}. There remains to prove the claimed bounds on $\aks$.

By \eqref{sz},
\begin{align}\label{sz2}
\blap_{g}|H|_{g}^{2} \geq 2|\si|^{2}_{g} + \tfrac{1}{n+1}\left(|H|^{4}_{g}  + 2(\aks  - (n+1)c)|H|^{2}_{g} + 2\aks^{2}\right).
\end{align}
Because $|H|^{2}_{g}$ is bounded, it follows from \eqref{xxric3} that the ordinary Ricci curvature of $g$ is bounded from below. Consequently the Omori-Yau maximum principle can be applied to \eqref{sz2}, and, writing $u = |H|^{2}_{g}$ and $u^{\ast} = \sup_{M}u$, it yields a sequence $\{x_{k}\} \in M$ such that $u(x_{k}) \geq u^{\ast} - k^{-1}$, $|du(x_{k})|_{g} < k^{-1}$, and $\lap_{g}u(x_{k}) < k^{-1}$. Substituting these relations into \eqref{sz2} shows that at $x_{k}$ there holds
\begin{align}
\tfrac{1}{k}( 1+ 2(n+1)c) \geq \tfrac{1}{k}(1 + \tfrac{1}{2}|H|_{g}) \geq \tfrac{1}{n+1}\left((u^{\ast} - \tfrac{1}{k})^{2}  + 2(\aks  - (n+1)c)(u^{\ast}- \tfrac{1}{k}) + 2\aks^{2}\right).
\end{align}
Letting $k \to \infty$ shows that the polynomial $p(x) = x^{2} + 2(\aks - (n+1)c)x + 2\aks^{2}$ has a nonnegative real root. This forces $(\sqrt{2} - 1)(n+1)c \geq \aks$. 
\end{proof}

Corollary \ref{sccorollary} is due to H. Shima in \cite{Shima-compacthessian}. It raises the question of whether the upper bound on $\aks$ in Theorem \ref{scalartheorem} can be improved. 
\begin{corollary}[\cite{Shima-compacthessian}]\label{sccorollary}
On a compact manifold a K\"ahler affine structure $(\nabla, g)$ has nonnegative K\"ahler affine scalar curvature if and only if $g$ is a flat metric with Levi-Civita connection $\nabla$.
\end{corollary}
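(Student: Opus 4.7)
The ``if'' direction is immediate: if $\nabla$ is the Levi-Civita connection of $g$, then $F_{ijk} = \nabla_{i}g_{jk} = 0$, so $H_{i} = F_{ip}\,^{p} = 0$, whence $\ak_{ij} = -\nabla_{i}H_{j} = 0$ and $\aks = 0 \geq 0$.

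For the converse, the plan is to exploit compactness via the divergence theorem applied to the identity
\begin{align}
\aks = -D^{p}H_{p} - \tfrac{1}{2}|H|^{2}_{g}
\end{align}
recorded in \eqref{aks}. Although $H$ arises locally as the differential of $\log \H(F)$, it is a globally defined closed one-form on $M$, and $H^{i}$ is a globally defined vector field, so integrating over the compact manifold $M$ (without boundary) against $d\vol_{g}$ and using the divergence theorem to kill the term $\int_{M} D^{p}H_{p}\,d\vol_{g}$ yields
\begin{align}
\int_{M}\aks\,d\vol_{g} = -\tfrac{1}{2}\int_{M}|H|^{2}_{g}\,d\vol_{g} \leq 0.
\end{align}
Combined with the hypothesis $\aks \geq 0$, this forces $\aks \equiv 0$ and $H_{i} \equiv 0$ on $M$. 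In particular $\ak_{ij} = -\nabla_{i}H_{j} = 0$.

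To upgrade the vanishing of $H$ to the full conclusion, the argument then follows the final step of the proof of Theorem \ref{scalartheorem}. Since $M$ is compact, $g$ is complete, so by the Shima-Yagi theorem (cited just before Theorem \ref{scalartheorem}) the universal cover $\tilde M$ is affinely diffeomorphic to a convex domain in flat affine space, and there exists a global potential $F \in C^{\infty}(\tilde M)$ with $g_{ij} = \nabla_{i}dF_{j}$. Because $H_{i} = d_{i}\log\H(F) = 0$, the Monge-Amp\`ere determinant $\H(F)$ is constant on $\tilde M$. The corollary to the main theorem of \cite{Calabi-improper} then asserts that a complete convex solution of $\H(F) = \text{const}$ is a quadratic polynomial on all of affine space; hence $F_{ijk} = 0$ on $\tilde M$, and this descends to $M$.

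With $F_{ijk} \equiv 0$ on $M$, the relation $D = \nabla + \tfrac{1}{2}F_{ij}\,^{k}$ shows $\nabla = D$, so $\nabla$ is the Levi-Civita connection of $g$, and the curvature formula \eqref{hessiancurvature} gives $R_{ijkl} = -\tfrac{1}{2}F_{pl[i}F_{j]k}\,^{p} = 0$, so $g$ is flat. The substantive step is the last one, which relies on Calabi's classification of Euclidean-complete improper affine spheres; the preceding integration argument is elementary, but essential for dropping the hypothesis that $\aks$ be constant that appears in Theorem \ref{scalartheorem}.
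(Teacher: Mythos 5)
Your proof is correct and follows essentially the same route as the paper: integrate the identity \eqref{aks} over the compact manifold to force $\aks \equiv 0$ and $H_{i} \equiv 0$, then reduce to the flat case. The only difference is that where the paper simply cites the equivalence \eqref{kas1}--\eqref{kas3} of Theorem \ref{scalartheorem}, you unfold that citation and reproduce its proof (Shima--Yagi plus Calabi's theorem on improper affine spheres), which is exactly the argument given there.
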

\begin{proof}
Suppose $\aks \geq 0$. Integrating \eqref{aks} yields $2\int_{M}\aks\,d\vol_{g} = -\int_{M}|H|^{2}_{g}\,d\vol_{g}$, so that it must be that $\aks = 0$ and $H_{i} = 0$, and so also $\ak_{ij} = 0$. The conclusion follows from Theorem \ref{scalartheorem}.
\end{proof}

This subsection concludes with a digression to explain that the completeness assumption in Theorem \ref{scalartheorem} is essential. In particular, there exist K\"ahler affine Einstein metrics $(\nabla, g)$ having positive K\"ahler affine scalar curvature and for which $g$ extends to a complete metric on some larger, compact manifold. That $g$ extends to a complete metric on some larger manifold might appear to contradict Theorem \ref{scalartheorem}. However, while that $(\nabla, g)$ be complete as a K\"ahler affine metric on $M$ means that $g$ is a complete Riemannian metric, it also means that $\nabla$ is defined on all of $M$. This imposes on $M$ the a priori quite restrictive condition that it admit a flat affine structure. In the cases where $\nabla dG$ with $G$ as in Theorem \ref{wangzhutheorem} extends to a complete metric on some larger manifold, it is the case that this manifold does not admit a flat affine structure restricting to give $\nabla$. 

The most important special case of the following theorem is due to X-J. Wang and X. Zhu in \cite{Wang-Zhu}. The version stated here is essentially that of \cite{Berman-Berndtsson}; see also \cite{Donaldson-toric}. In the statement, a system of flat affine coordinates on $\rea^{n+1\,\ast}$ dual to the coordinates $x^{i}$ on $\rea^{n+1}$ is written $y_{i}$.
\begin{theorem}[\cite{Wang-Zhu}, \cite{Donaldson-toric}, \cite{Berman-Berndtsson}]\label{wangzhutheorem}
For a bounded convex open domain $\Om \subset \rea^{n+1}$ with barycenter $b \in \rea^{n+1}$ there exists a convex function $G \in \cinf(\rea^{n+1\,\ast})$ such that $\H(G)(y) = ce^{-G(y) + b^{p}y_{p}}$ for some nonzero constant $c$ depending only on $\Om$, and such that the map $y \to dG(y)$ is a diffeomorphism from $\rea^{n+1}$ onto $\Om$.
\end{theorem}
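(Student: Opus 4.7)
The plan is a direct variational approach. Define the functional
$$\mathcal{F}(G) \;=\; \int_{\Om} G^{\star}(x)\,dx \;-\; |\Om|\log\int_{\rea^{n+1\,*}} e^{-G(y)+b^{p}y_{p}}\,dy,$$
on smooth strictly convex $G\in\cinf(\rea^{n+1\,*})$ whose gradient maps $\rea^{n+1\,*}$ diffeomorphically onto $\Om$, where $G^{\star}$ denotes the Legendre transform of $G$ and $|\Om|=\int_{\Om}dx$. A first-variation calculation --- using the envelope identity $\delta G^{\star}(dG(y))=-\delta G(y)$ together with the change of variables $x=dG(y)$ with Jacobian $\H(G)(y)$ --- yields the Euler--Lagrange equation
$$\H(G)(y) \;=\; c\, e^{-G(y)+b^{p}y_{p}},\qquad c \;=\; \frac{|\Om|}{\int_{\rea^{n+1\,*}}e^{-G+b^{p}y_{p}}\,dy},$$
which is precisely the equation of the theorem. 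The functional is invariant under the scalar shift $G\mapsto G+\al$, accounting for the rescaling freedom in $c$.

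The barycenter hypothesis enters as a consistency condition. Integrating the Euler--Lagrange equation against $y_{p}$ and using the change-of-variables formula $\int y_{p}\H(G)\,dy = \int_{\Om}x_{p}\,dx = |\Om|\,b_{p}^{\mathrm{bary}}$ shows that any solution forces $b=b^{\mathrm{bary}}$. Equivalently, in the enlarged class of convex $G$ of super-linear growth (without the gradient-image constraint), the first variation of $\mathcal{F}$ in the direction $G\mapsto G+ta^{p}y_{p}$ is a non-trivial multiple of $b-b^{\mathrm{bary}}$, so $\mathcal{F}$ is unbounded below unless $b$ is the barycenter of $\Om$. Thus the barycenter condition is both necessary and, as is the content of the theorem, sufficient.

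With $b=b^{\mathrm{bary}}$ fixed, the plan proceeds in standard variational fashion: normalize by $\int e^{-G+b^{p}y_{p}}\,dy=1$ to eliminate the scalar-shift symmetry; prove that sub-level sets of $\mathcal{F}$ are precompact for the topology of local uniform convergence of convex functions; extract a minimizer by the direct method, obtaining a weak (Alexandrov-sense) solution of the Monge--Amp\`ere equation; apply Caffarelli's interior regularity theory for the real Monge--Amp\`ere equation to upgrade this to a $\cinf$ strictly convex solution; and verify that $dG(\rea^{n+1\,*})$ coincides with $\Om$, since the Euler--Lagrange equation together with the normalization $\int\H(G)\,dy = |\Om|$ fixes the volume of the image, and convexity of the image identifies it with $\Om$.

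The main obstacle is the precompactness of sub-level sets, i.e., coercivity modulo constants. The qualitative fact that the first variation under affine drift vanishes (by the barycenter condition) must be promoted to a quantitative inequality of the form $\mathcal{F}(G)\geq c\,\|G-\al\|^{r}-c'$ for a suitable norm on the quotient by the scalar-shift symmetry. This amounts to a Moser--Trudinger or Pr\'ekopa--Leindler type estimate tied to the geometry of $\Om$ and its barycenter, and is the technical heart of all existing proofs --- handled via the continuity method and the K\"ahler--Ricci flow in Wang--Zhu, via a careful analysis of the toric Mabuchi K-energy in Donaldson, and via a direct pluripotential free-energy computation in Berman--Berndtsson.
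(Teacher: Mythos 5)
This statement is one the paper does not prove: Theorem \ref{wangzhutheorem} is imported wholesale from \cite{Wang-Zhu}, \cite{Donaldson-toric}, and \cite{Berman-Berndtsson}, so there is no internal argument to compare yours against. Your outline follows the standard variational (Donaldson/Berman--Berndtsson) route, and the formal skeleton is right: the functional, the first variation via the envelope identity and the change of variables $x = dG(y)$, and the Euler--Lagrange equation $\H(G) = ce^{-G + b^{p}y_{p}}$ with $c = |\Om|/\int e^{-G+b^{p}y_{p}}\,dy$ are all correct.

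As a proof, however, it does not close. The decisive step --- coercivity of $\mathcal{F}$ modulo the scalar-shift symmetry, equivalently precompactness of sublevel sets --- is precisely the content of the cited papers, and you explicitly defer it to them; a proof whose acknowledged ``technical heart'' is outsourced to the sources of the theorem is a description of a strategy, not a proof. Two further points need repair. First, the barycenter computation is wrong as written: the change of variables gives $\int (dG(y))_{p}\,\H(G)(y)\,dy = \int_{\Om}x_{p}\,dx$, not $\int y_{p}\,\H(G)(y)\,dy = \int_{\Om}x_{p}\,dx$; the correct route is to integrate the identity $\partial_{p}\bigl(e^{-G+b^{q}y_{q}}\bigr) = (b_{p}-G_{p})e^{-G+b^{q}y_{q}}$ over $\rea^{n+1\,\ast}$ and then use the equation. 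Second, the final step identifying the image of $dG$ with $\Om$ is a non sequitur: a convex open set of volume $|\Om|$ need not equal $\Om$. The containment $dG(\rea^{n+1\,\ast})\subset\bar{\Om}$ must be built into the admissible class (e.g., by requiring $G$ to grow like the support function of $\Om$, so that $G^{\star}=+\infty$ off $\bar{\Om}$), after which surjectivity onto the interior follows from the total mass of the Monge--Amp\`ere measure together with strict convexity; that strict convexity of the Alexandrov minimizer, which the Caffarelli bootstrap to $\cinf$ also presupposes, is asserted rather than proved.
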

The function $v(y) = G(y) - b^{p}y_{p}$ solves $\H(v) = ce^{-v}$, and its differential maps $\rea^{n+1}$ onto the image of $\Om$ under the translation sending $b$ to the origin. The Hessian metric $(\nabla, g = \nabla dG)$ generated by $G$ on $\rea^{n+1\,\ast}$ is K\"ahler affine Einstein with positive K\"ahler affine scalar curvature. By Theorem \ref{scalartheorem} this means that $g$ is not complete. As will be explained briefly next, it can happen that the metric $\nabla dG$ extends to be complete on some manifold for which $\rea^{n+1\,\ast}$ is a coordinate chart. By \cite{Wang-Zhu}, this happens when $G$ arises as the restriction to an open orbit of the K\"ahler potential of the K\"ahler Einstein metric on a toric Fano manifold with vanishing Futaki invariant. 

Let $P$ be a Delzant polytope. This is an $(n+1)$-valent convex polytope satisfying certain integrality conditions (see \cite{Guillemin-toric} for the definition). As is explained in section $2$ of \cite{Donaldson-toric} from $P$ there can be constructed a $2(n+1)$-dimensional symplectic manifold $X$ equipped with an effective torus action having moment map $\mu$ with image $P$, and a Lagrangian submanifold $M \subset X$ such that the restriction to $M$ of $\mu$ is a $2^{n+1}$-fold covering over the interior $P^{\circ}$ of $P$. Let $G \in \cinf(\rea^{n+1\,\ast})$ be the function given by Theorem \ref{wangzhutheorem} such that $dG$ maps $\rea^{n+1\,\ast}$ diffeomorphically onto $P^{\circ}$ and let $g = \nabla dG$ where $\nabla$ is the flat affine structure on $\rea^{n+1\,\ast}$. Then $(\nabla, g)$ is K\"ahler affine Einstein with K\"ahler affine scalar curvature $1$. By Lemma \ref{duallegendrelemma} the pullback via $dG$ of the standard flat affine connection on $P$ forms with $g$ the dual K\"ahler affine structure. Pushing $(\nabla, g)$ forward to $P^{\circ}$ via $dG$, and pulling the result back to $M$ via $\mu$ yields a positive K\"ahler affine Einstein structure on the dense open subset $M^{\circ} = \mu^{-1}(P^{\circ})$ of $M$. In the case $P$ is the moment polytope of a toric Fano manifold with vanishing Futaki invariant it follows from \cite{Wang-Zhu} that $g$ extends to all of $M$.

An explicit illustrative example in which this occurs comes from the realization of complex projective space as a toric variety. In this case the relevant potential is $G(y) = (n+2)\log(1 + \sum_{i = 0}^{n}e^{y_{i}/(n+2)})$. It solves $\H(G)  = (n+2)^{-(n+1)}e^{-G(y) + \sum_{i = 0}^{n}y_{i}/(n+2)}$, and its differential maps $\rea^{n+1}$ onto the open standard simplex $\simplex = \{s_{i} \in \rea^{n+1}: s_{i} \geq 0, \sum_{i}s_{i} \leq 1\}$ with vertices at the origin and the standard coordinate vectors $e_{0}, \dots, e_{n}$; the barycenter is $b = (e_{0} + \dots +e_{n})/(n+2)$ (see, e.g., section $4$ of \cite{Klartag} for the relevant computations). If the standard simplex is regarded as the Delzant polytope corresponding to the standard torus action on complex projective space, then the Legendre transform $G^{\ast}(s) = (n+2)\left(\sum_{i}s_{i}\log s_{i} + (1 - \sum_{i}s_{i})\log(1 - \sum_{i}s_{i}) \right)$ of $G$ is a constant multiple of the canonical symplectic potential of the Fubini-Study symplectic structure.

By the projective $(n+1)$-sphere is meant the space of rays in a $(n+2)$-dimensional vector space. A standard affine coordinate chart is given by setting one of the homogeneous coordinates equal to $1$. On such a chart, which it is convenient to regard as $\rea^{n+1\,\ast}$, there are (inhomogeneous) coordinates $z_{i}$ such that the one-forms $dz_{i}$ are parallel with respect to the standard flat affine connection $\hnabla$. Let $u(z) = (1 + |z|^{2})^{1/2}$, where the norm is the standard Euclidean norm. The Levi-Civita connection $D$ of the metric $h = u^{-1}u_{ij} = u^{-1}\hnabla du$ is $D = \hnabla - 2u^{-1}u_{(i}\delta_{j)}\,^{k}$, which is evidently projectively flat, and it follows that $h$ is a metric of constant positive sectional curvature defined on all of $\rea^{n+1\,\ast}$. The $(n+1)$-dimensional projective sphere can be covered by standard affine coordinate charts in which the standard Fubini-Study metric has this form. The metric $u_{ij}$ is by construction Hessian with respect to $\hnabla$. It is straightforward to check that $\hat{H}(u) = u^{-n-3}$, where $\hat{\H}(u)$ is defined with respect to the $\hnabla$-parallel volume $\hat{\Psi} = dz_{0}\wedge \dots \wedge dz_{n}$ and $\hnabla$. Define a diffeomorphism from the orthant $\orthant = \{z_{i} > 0\}$ in $\rea^{n+1, \ast}$ to $\rea^{n+1\,\ast}$ by $y_{i} = 2(n+2)\log z_{i}$. Then $u(z) = e^{G(y)/2(n+2)}$, and a straightforward computation shows that the pullback of the metric $g$ equals $4(n+2)h$, showing directly $g$ has constant positive sectional curvature. Note, however, that the pullback of the flat affine connection $\nabla$ for which $dy_{i}$ is a parallel coframe is neither equal to $\hnabla$ nor projectively equivalent to $\hnabla$. Geometrically the description of the projective sphere in terms of the potential $G$ is obtained by regarding it as the standard cross polytope the faces of which are regular simplices.

The preceeding discussion suggests that the notion of K\"ahler affine metric is inadequate, or, rather, in some sense the specialization for affine structures of some structure defined for manifolds equipped with a projective structure. This sense is reinforced by the remark that there are no compact examples with positive Chern class in the sense of Cheng and Yau (\cite{Cheng-Yau-realmongeampere}), and suggests enlarging the notion of a K\"ahler affine manifold. Some much more general notions motivated in part by such considerations were proposed in \cite{Fox-ahs} and the relevant special case is described now. Say that a pair $(\en, [g])$, on an $(n+1)$-dimensional manifold $M$, comprising a projective structure $\en$ and a Riemannian conformal structure $[g]$ is \textbf{locally K\"ahler affine} if for every $p \in M$ there are an open neighborhood $U$ of $p$, a representative $\nabla$ of the restriction to $U$ of $\en$, and a representative $g$ of the restriction to $U$ of $[g]$, such that on $U$, $\nabla$ is flat as an affine connection and $\nabla_{[i}g_{j]k} = 0$. In particular this necessitates that $\en$ be projectively flat. For any pair $(\en, [g])$ there is a unique representative $\nabla \in \en$, said to be \textbf{aligned}, such that $(n+1)g^{jk}\nabla_{j}g_{ki} = g^{jk}\nabla_{i}g_{jk}$ for any $g \in [g]$. It can be proved that, if $n+1 \geq 3$, a pair $(\en, [g])$ with $\en$ projectively flat is locally K\"ahler affine if and only if for any $g \in [g]$ the aligned representative $\nabla \in \en$ satisfies $\nabla_{[i}g_{j]k} = 2\tau_{[i}g_{j]k}$ for a closed one-form $\tau_{i}$ (necessarily equal to $(1/2(n+1))g^{pq}\nabla_{i}g_{pq}$). A pair $(\en, [g])$ satisfying this latter condition is what is called a projectively flat AH structure in \cite{Fox-ahs}.

\subsection{}
Suppose $g_{ij} = \nabla_{i}dF_{j}$ is a Hessian metric with global potential $F$. Using \eqref{nricdefined} there results
\begin{align}
\label{normddf}
\begin{split}
|DdF|_{g}^{2} &= (n+1) - F^{p}H_{p} +  \tfrac{1}{4}F^{i}F^{j}F_{ip}\,^{q}F_{jq}\,^{p}= (n+1) -  F^{p}H_{p} +  F^{i}F^{j}(\bric_{ij} - \tfrac{1}{2}\ak_{ij}).
\end{split}
\end{align}
Taking $A = F$ in \eqref{bbochner}, using $\blap_{g}F = n+1$, and substituting \eqref{normddf} and \eqref{nricdefined} yields
\begin{align}\label{blapdf}
\begin{split}
\blap_{g}|dF|_{g}^{2} &= 2|DdF|_{g}^{2} + 2F^{i}F^{j}\bric_{ij}.
\end{split}
\end{align}
Substituting \eqref{nricdefined} and \eqref{normddf} into \eqref{blapdf}, and using \eqref{nr1} yields
\begin{align}\label{blapdf2}
\begin{split}
\blap_{g}|dF|_{g}^{2}& = 2(n+1) - 2F^{p}H_{p} + F^{i}F^{j}F_{ip}\,^{q}F_{jq}\,^{p}  + F^{i}F^{j}\ak_{ij}\\
& = (n+1) + \left(\tfrac{F^{p}H_{p}}{\sqrt{n+1}} - \sqrt{n+1}\right)^{2} + \left(F^{i}F^{j}F_{ip}\,^{q}F_{jq}\,^{p} - \tfrac{1}{n+1}(H^{p}F_{p})^{2}\right) + F^{i}F^{j}\ak_{ij} \\
& \geq (n+1) + F^{i}F^{j}\ak_{ij}.
\end{split}
\end{align}
Using \eqref{blapdf} yields that wherever $|dF|^{2}_{g}$ is not zero there holds
\begin{align}\label{blapdf3}
\begin{split}
|dF|_{g}\blap_{g}|dF|_{g} & = \tfrac{1}{2}\left(\blap_{g}|dF|^{2}_{g} - \tfrac{1}{2}|dF|_{g}^{-2}|d|dF|^{2}_{g}|^{2}_{g}\right) = |Ddf|_{g}^{2}  - |d|dF|_{g}|_{g}^{2} + F^{i}F^{j}\bric_{ij} \\
&\geq F^{i}F^{j}\bric_{ij}\geq \tfrac{1}{4(n+1)}(F^{p}H_{p})^{2} + \tfrac{1}{2}F^{i}F^{j}\ak_{ij}.
\end{split}
\end{align}
the penultimate inequality by the Kato inequality, and the last inequality by \eqref{nricdefined} and \eqref{nr1}. 

Since both $H^{p}F_{p}$ and $|dF|^{2}_{g}$ are unchanged if $F$ is replaced by $e^{r}F$, an inequality of the form $H^{p}F_{p} \geq b |dF|^{2}_{g}$ makes sense.

\begin{lemma}\label{dfboundedlemma}
Let $\nabla$ be a flat affine connection on the $(n+1)$-dimensional manifold $M$, and suppose $F \in \cinf(M)$ is such that $g_{ij} = \nabla_{i}dF_{j}$ is a complete Riemannian metric on $M$. If there are constants $b > 0$ and $c > 0$ such that $H^{p}F_{p} \geq b|dF|^{2}_{g}$ and $\ak_{ij} \geq -2cg_{ij}$ then $\sup_{M}|dF|^{2}_{g} \leq 4(n+1)cb^{-2}$. If, moreover, $|H|^{2}_{g}$ is bounded from above, then $\sup_{M}|dF|^{2}_{g} \geq (n+1)/(2c)$.
\end{lemma}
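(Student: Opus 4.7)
The plan is to extract both bounds from the inequalities \eqref{blapdf3} and \eqref{blapdf2} obtained just above the statement: the upper bound will be a direct application of Theorem~\ref{cyestimatetheorem} to $u = |dF|_g$, and the lower bound will follow from the Omori--Yau maximum principle applied to $v = |dF|_g^2$.

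For the upper bound I would take $u = |dF|_g$ and substitute the two hypotheses into \eqref{blapdf3}: from $H^p F_p \geq b |dF|_g^2$ one gets $(F^p H_p)^2 \geq b^2 u^4$, and from $\ak_{ij} \geq -2c g_{ij}$ one gets $F^i F^j \ak_{ij} \geq -2c u^2$. Together these give, wherever $u > 0$,
\[
\blap_g u \;\geq\; \frac{b^2}{4(n+1)}\, u^3 \;-\; c\, u,
\]
which has the form required by Theorem~\ref{cyestimatetheorem} with $B = b^2/(4(n+1))$, $A = c$, and $\sigma = 2$. The curvature hypothesis is supplied by \eqref{cd1}, which upgrades $\ak_{ij} \geq -2c g_{ij}$ to the $2(n+1)$-dimensional Bakry--Emery Ricci bound $R(n+1)_{ij} \geq -c g_{ij}$ for the mm-structure $(g, \tfrac{1}{2}H)$; completeness of the mm-space follows from completeness of $g$, and $u$ is not identically zero since $g$ is nondegenerate. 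Letting $a\to\infty$ in \eqref{cyestimate} gives $\sup_M u \leq (A/B)^{1/\sigma} = 2\sqrt{(n+1)c}/b$, equivalently $\sup_M |dF|_g^2 \leq 4(n+1)cb^{-2}$.

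For the lower bound, assume in addition that $|H|_g^2$ is bounded. By the upper bound just established, $v = |dF|_g^2$ is bounded above. Moreover, \eqref{xxric3} converts the boundedness of $|H|_g^2$ into an a priori lower bound on the ordinary Ricci tensor of $g$. Together with completeness of $g$, this permits application of the Omori--Yau maximum principle to $v$, producing a sequence $\{x_k\} \subset M$ with $v(x_k) \to \sup_M v$, $|dv|_g(x_k) \to 0$, and $\lap_g v(x_k) \to 0$. Since $\blap_g v = \lap_g v + \tfrac{1}{2} H^p D_p v$ and $|H^p D_p v| \leq |H|_g |dv|_g$, boundedness of $|H|_g$ forces $\blap_g v(x_k) \to 0$ as well. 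But \eqref{blapdf2} combined with $\ak_{ij} \geq -2c g_{ij}$ yields $\blap_g v \geq (n+1) - 2c v$ pointwise; evaluating at $x_k$ and letting $k \to \infty$ gives $0 \geq (n+1) - 2c \sup_M v$, i.e., $\sup_M |dF|_g^2 \geq (n+1)/(2c)$.

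The main point to get right is the identification of the correct Bakry--Emery dimension in the application of Theorem~\ref{cyestimatetheorem}: one should take $N = n+1$ and exploit the strengthened bound \eqref{cd1} rather than the weaker $\bric_{ij} \geq \tfrac{1}{2}\ak_{ij}$ provided by \eqref{nricdefined}, else the final constant in the upper bound will be off. The role of the hypothesis ``$|H|_g^2$ bounded'' in the second statement is likewise nontrivial: it is precisely what \eqref{xxric3} needs to produce the lower Ricci bound required for Omori--Yau, and without it the K\"ahler affine and Bakry--Emery Ricci bounds alone do not control $R_{ij}$ from below.
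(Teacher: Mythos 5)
Your proof is correct and follows essentially the same route as the paper: the upper bound comes from substituting the two hypotheses into \eqref{blapdf3} and applying Theorem \ref{cyestimatetheorem} with $\sigma = 2$, $B = b^{2}/(4(n+1))$, $A = c$, and the lower bound from the Omori--Yau maximum principle (enabled by \eqref{xxric3} once $|H|_{g}^{2}$ is bounded) applied to $|dF|_{g}^{2}$ together with \eqref{blapdf2}. The only cosmetic imprecision is that Omori--Yau yields $\lap_{g}v(x_{k}) < k^{-1}$ rather than $\lap_{g}v(x_{k}) \to 0$, but since only that upper bound enters the final inequality your conclusion stands.
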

\begin{proof}
The inequality \eqref{blapdf3} simplifies to $\blap_{g}|dF|_{g} \geq \tfrac{b^{2}}{4(n+1)}|dF|_{g}^{3} - c|dF|_{g}$. By Theorem \ref{cyestimatetheorem}, $\sup_{M}|dF|^{2}_{g} \leq 4(n+1)cb^{-2}$. Let $u = |dF|^{2}_{g}$ and $u^{\ast} = \sup_{M}|dF|^{2}_{g}$.  If $|H|^{2}_{g}$ is bounded from above by $Q^{2} > 0$ then by \eqref{xxric3} the ordinary Ricci curvature of $g$ is bounded from below. Since the Ricci curvature is bounded from below and $g$ is complete, by the Omori-Yau maximum principle there is a sequence of points $\{x_{k}\} \subset \Omega$ such that $u(x_{k}) \geq u^{\ast} - k^{-1}$, $|du(x_{k})|_{g} < k^{-1}$, and $\lap_{g}u(x_{k}) < k^{-1}$. Substituting this into \eqref{blapdf2} yields that at $x_{k}$ there holds
\begin{align}
\begin{split}
k^{-1}(1 + Q/2)& \geq k^{-1} + \tfrac{1}{2}|H|_{g}|du|_{g} \geq \blap_{g}u \geq n+1 -2cu \geq n+1 - 2cu^{\ast}.
\end{split}
\end{align}
Letting $k \to \infty$ yields $u^{\ast} \geq (n+1)/(2c)$.
\end{proof}

\begin{corollary}
The canonical potential $F$ of a proper convex domain $\Om \subset \rea^{n+1}$ satisfies $(n+1)/2 \leq \sup_{\Om}|dF|_{g}^{2} \leq n+1$.
\end{corollary}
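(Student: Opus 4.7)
The plan is to apply Lemma \ref{dfboundedlemma} directly, with the constants $b$ and $c$ read off from the K\"ahler affine Einstein equation $\H(F) = e^{2F}$ defining the canonical potential. First I would observe that since $\log\H(F) = 2F$, the Koszul form of the canonical metric is $H_i = d_i\log\H(F) = 2F_i$, and consequently
\begin{align*}
\ak_{ij} = -\nabla_i H_j = -2\nabla_i dF_j = -2 g_{ij}, \qquad H^p F_p = 2 F^p F_p = 2|dF|^2_g.
\end{align*}
Hence the hypotheses of Lemma \ref{dfboundedlemma} hold with $c = 1$ and $b = 2$.

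The completeness of the Hessian metric $g_{ij} = \nabla_i dF_j$ is part of the defining property of the canonical potential guaranteed by Theorem \ref{cytheorem}, so the first conclusion of Lemma \ref{dfboundedlemma} yields
\begin{align*}
\sup_\Om |dF|^2_g \;\leq\; 4(n+1)c\,b^{-2} \;=\; n+1,
\end{align*}
which is the claimed upper bound. This bound then implies $|H|^2_g = 4|dF|^2_g \leq 4(n+1)$ is bounded from above on $\Om$, so the second conclusion of Lemma \ref{dfboundedlemma} applies and gives
\begin{align*}
\sup_\Om |dF|^2_g \;\geq\; (n+1)/(2c) \;=\; (n+1)/2,
\end{align*}
which is the lower bound. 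Thus both bounds fall out as specializations of Lemma \ref{dfboundedlemma} once the Einstein identity $H = 2\,dF$ has been noted, and there is no real obstacle: the upper bound must be established first precisely so that the boundedness hypothesis on $|H|^2_g$ needed for the lower bound is in force.
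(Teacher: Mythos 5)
Your proof is correct and follows exactly the route the paper takes: both apply Lemma \ref{dfboundedlemma} with the constants $b=2$, $c=1$ read off from $\H(F)=e^{2F}$ (the paper phrases this for general $\al$ with $b=\al$, $c=\al/2$ and then specializes). Your explicit remark that the upper bound must come first so that $|H|^{2}_{g}=4|dF|^{2}_{g}$ is bounded, as required for the lemma's second conclusion, is a point the paper leaves implicit but is exactly the intended logic.
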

\begin{proof}
If $\H(F) = e^{\al F}$ for some $\al \neq 0$ and $g_{ij} = \nabla_{i}dF_{j}$ is complete, then the hypotheses of Lemma \ref{dfboundedlemma} are satisfied with $b = \al$ and $c = \al/2$, and Lemma \ref{dfboundedlemma} yields that $(n+1)\al^{-1} \leq \sup_{M}|dF|^{2}_{g} \leq 2(n+1)\al^{-1}$. 
\end{proof}
In fact, by Theorem \ref{affinespheretheorem}, when $M$ is a proper convex cone $F$ must be logarithmically homogeneous and $|dF|^{2}_{g} = 2(n+1)\al^{-1}$, but this is a nontrivial consequence of Corollary \ref{gfcorollary}. On the other hand, for a bounded convex domain the canonical potential, being a strictly convex function tending to $+\infty$ on the boundary of the domain, has a unique minimum in the domain, so in this case $|dF|_{g}^{2}$ is certainly not constant.

\subsection{}
Skewing
\begin{align}\label{df}
D_{i}F_{jkl} = F_{ijkl} - \tfrac{3}{2}F_{i(j}\,^{p}F_{kl)p},
\end{align}
in $ij$ and tracing it in $il$ yield 
\begin{align}\label{df2}
&D_{[i}F_{j]kl} = 0,&& D^{p}F_{ijp} = D_{i}F_{jp}\,^{p} = D_{i}H_{j}. 
\end{align}
The completely symmetric trace-free tensor $A_{ijk}$ defined by
\begin{align}\label{aijkdefined}
A_{ijk} = F_{ijk} - \tfrac{3}{n+1}H_{(i}g_{jk)} + \tfrac{2}{n+1}|H|_{g}^{-2}H_{i}H_{j}H_{k},
\end{align}
satisfies
\begin{align}
\begin{split}
H^{p}A_{ijp} &= H^{p}F_{ijp} - \tfrac{1}{n+1}|H|^{2}_{g}g_{ij} = -2\left(\ak_{ij} - \tfrac{1}{n+1}\aks g_{ij} + D_{i}H_{j} - \tfrac{1}{n+1}D^{p}H_{p}g_{ij} \right),\\
\end{split}
\end{align}
so that $H^{p}A_{ijp} = 0$ if $H^{i}$ is Killing and $(\nabla, g)$ is K\"ahler affine Einstein.  This observation and the identity \eqref{df2} suggest that interesting conditions on a K\"ahler affine metric are that $H^{i}$ be Killing or conformal Killing for $g$. In this regard, observe:
\begin{lemma}\label{killinglemma}
For a K\"ahler affine Einstein metric $(\nabla, g)$ on a compact manifold $M$ there hold
\begin{enumerate}
\item\label{kl1} The vector field $H^{i}$ is parallel with respect to the Levi-Civita connection $D$ of $g$, and if $g$ is not flat then $H^{i}$ is nowhere vanishing and satisfies $H^{i}H^{j}R_{ij} = 0$.
\item\label{kl2} The dual K\"ahler affine structure $(\bnabla, g)$ is K\"ahler affine Einstein, and $\bar{\aks} = \aks = -\tfrac{1}{2}|H|_{g}^{2}$.
\item\label{kl3} The affine structures determined by $\nabla$ and $\bnabla$ are radiant with radiant vector field $\eul^{i} = (n+1)\aks^{-1}H^{i}$.
\end{enumerate}
\end{lemma}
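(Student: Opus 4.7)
The plan is to prove part \eqref{kl1} by a maximum-principle argument applied to a Bochner-type inequality specialized to the K\"ahler affine Einstein setting, and then to deduce parts \eqref{kl2} and \eqref{kl3} as direct algebraic consequences of $DH=0$.

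First, combining the Einstein condition $\ak_{ij}=\tfrac{\aks}{n+1}g_{ij}$, the constancy of $\aks$ (Lemma \ref{keakslemma}), and \eqref{aks}, one has $\div H^{\sharp}=D^{p}H_{p}=-\aks-\tfrac{1}{2}|H|^{2}_{g}$. Integrating over the compact manifold via Stokes yields $\int_{M}|H|^{2}_{g}\,d\vol_{g}=-2\aks\vol(M)$, so $\aks\leq 0$, with equality if and only if $H\equiv 0$.

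The key step specializes \eqref{sz} to the Einstein setting, using $d\aks=0$ and $\ak(H,H)=\tfrac{\aks}{n+1}|H|^{2}_{g}$ together with the Kato-type lower bound \eqref{nr1} on $F_{ip}{}^{q}F_{jq}{}^{p}$ entering $\bric$ through \eqref{nricdefined}. After expansion and factorization this produces the pointwise inequality
\begin{align*}
\blap_{g}|H|^{2}_{g}\geq 2|\si|^{2}_{g}+\tfrac{1}{n+1}(|H|^{2}_{g}+\aks)(|H|^{2}_{g}+2\aks).
\end{align*}
At a point $x_{0}$ where $|H|^{2}_{g}$ attains its maximum, $\blap_{g}|H|^{2}_{g}(x_{0})\leq 0$. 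Since $|H(x_{0})|^{2}_{g}\geq -2\aks$ by the integral identity and $\aks\leq 0$, both factors on the right are nonnegative at $x_{0}$, so both sides vanish there; hence $\si(x_{0})=0$ and either $|H(x_{0})|^{2}_{g}=-\aks$ or $|H(x_{0})|^{2}_{g}=-2\aks$. The first alternative combined with $|H(x_{0})|^{2}_{g}\geq -2\aks$ and $\aks\leq 0$ forces $\aks=0$, whence $H\equiv 0$, $\ak=0$, and by Theorem \ref{scalartheorem} the metric $g$ is flat with Levi-Civita connection $\nabla$. In the second alternative $\sup|H|^{2}_{g}=-2\aks$, so $|H|^{2}_{g}\leq -2\aks$ pointwise while $\int|H|^{2}_{g}=-2\aks\vol(M)$; this forces $|H|^{2}_{g}\equiv -2\aks$ to be constant, hence $\blap_{g}|H|^{2}_{g}\equiv 0$ globally, the inequality collapses to $\si\equiv 0$, and $\div H=-\aks-\tfrac{1}{2}(-2\aks)=0$. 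Therefore $D_{i}H_{j}=\si_{ij}+\tfrac{1}{n+1}(\div H)g_{ij}\equiv 0$; since $|H|^{2}_{g}=-2\aks>0$ is positive, $H$ is nowhere vanishing, and the classical Bochner formula reduces to $R(H,H)=0$, completing \eqref{kl1}.

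Given $DH=0$, part \eqref{kl2} follows immediately from \eqref{akakbardiff}: $\bar{\ak}_{ij}-\ak_{ij}=2D_{i}H_{j}=0$, so $(\bnabla,g)$ is K\"ahler affine Einstein with $\bar{\aks}=\aks=-\tfrac{1}{2}|H|^{2}_{g}$. For \eqref{kl3}, the relations $DH=0$ and $\nabla_{i}H_{j}=-\tfrac{\aks}{n+1}g_{ij}$ combine to give $F_{ij}{}^{p}H_{p}=-\tfrac{2\aks}{n+1}g_{ij}$; then expanding $\nabla_{i}H^{j}=(\nabla_{i}g^{jk})H_{k}+g^{jk}\nabla_{i}H_{k}$ via $\nabla_{i}g^{jk}=-g^{jp}g^{kq}F_{ipq}$ yields $\nabla_{i}H^{j}=\tfrac{\aks}{n+1}\delta_{i}{}^{j}$, so $\eul^{i}=(n+1)\aks^{-1}H^{i}$ satisfies $\nabla_{i}\eul^{j}=\delta_{i}{}^{j}$, and a parallel calculation using $\bnabla=\nabla+F_{ij}{}^{k}$ handles the dual connection. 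The main obstacle is producing the factorization $(|H|^{2}_{g}+\aks)(|H|^{2}_{g}+2\aks)$, for which the sharpened estimate \eqref{nr1} is essential: the root $-2\aks$ of the second factor is exactly the average value of $|H|^{2}_{g}$ forced by Stokes' theorem, and this alignment is what makes the maximum principle conclusive.
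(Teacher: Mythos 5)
Your proof is correct, but it establishes the key claim $D_{i}H_{j}=0$ by a genuinely different route from the paper. The paper deduces it from global affine geometry: by the Koszul--Vey theorem the universal cover of $M$ is a proper convex cone, the lifted global potential is identified via the uniqueness statement of Corollary \ref{gfcorollary} with an affine rescaling of the canonical potential of that cone, and $D_{i}F_{j}=0$ (hence $D_{i}H_{j}=0$) is read off from the established properties of that potential. You instead work intrinsically on $M$: the integral identity $\int_{M}|H|^{2}_{g}\,d\vol_{g}=-2\aks\vol(M)$ obtained from \eqref{aks}, together with the specialization of \eqref{sz} to the Einstein case --- whose quadratic term in $|H|^{2}_{g}$ indeed factors as $\tfrac{1}{n+1}(|H|^{2}_{g}+\aks)(|H|^{2}_{g}+2\aks)$, since $H^{i}H^{j}\ak_{ij}=\tfrac{\aks}{n+1}|H|^{2}_{g}$ supplies exactly the missing $\aks|H|^{2}_{g}/(n+1)$ --- lets an elementary maximum-principle argument pin $|H|^{2}_{g}\equiv-2\aks$ and force $\si\equiv 0$ and $D^{p}H_{p}\equiv 0$. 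I checked the case analysis at the maximum point and the subsequent derivations of \eqref{kl2} and \eqref{kl3} from $DH=0$; these last coincide with the paper's. Your route avoids the Koszul--Vey theorem and the Schwarz-lemma/canonical-potential machinery entirely, is closer in spirit to the proof of Theorem \ref{scalartheorem}, and yields the sharp identity $|H|^{2}_{g}=-2\aks$ directly rather than through \eqref{akplusakbar}; what it gives up is the structural information the paper's argument provides for free, namely that the universal cover is a proper convex cone carrying the canonical potential. One cosmetic point: in your second alternative you should remark that $\aks=0$ again lands in the flat case, so that $|H|^{2}_{g}=-2\aks>0$ (hence $H$ nowhere vanishing) holds precisely when $g$ is not flat, which is all the lemma asserts.
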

\begin{proof}
By Theorem \ref{scalartheorem}, a complete K\"ahler affine Einstein metric is either flat or has negative K\"ahler affine scalar curvature. In the flat case $H^{i}$ is $\nabla$-parallel and $\nabla$ is the Levi-Civita connection of $g$, so $H^{i}$ is $D$-parallel. In the case $\aks < 0$, since $\aks$ is constant there holds $-\nabla_{i}('aks^{-1}(n+1)H_{j}) = g_{ij} >0$. If $M$ is compact, then by the theorem of Koszul and Vey the universal cover of $M$ is a proper convex cone. Since the universal cover is simply connected, there is a globally defined primitive $F$ of $-(\tfrac{n+1}{\aks})H_{j}$, and so $F$ is also a global potential of $g$. Since the lifted metric is complete, by Corollary \ref{gfcorollary}, there is a constant $c$ such that $\tfrac{-2\aks}{n+1}F + c$ is the canonical potential. This implies $D_{i}F_{j} = 0$, so also $D_{i}H_{j} = 0$, which shows that the vector field on the universal cover dual to the Koszul form is Killing, and so the same is true on the original manifold. That $(\bnabla, g)$ is K\"ahler affine Einstein, and $\bar{\aks} = \aks = -\tfrac{1}{2}|H|_{g}^{2}$ follow from \eqref{akplusakbar} and \eqref{akakbardiff}. Since $H_{i}$ is parallel, it is either identically zero or nowhere-vanishing, and if $\aks < 0$, it must be that $H_{i}$ is nowhere vanishing. It then follows from \eqref{sz} that $H^{i}H^{j}R_{ij} = 0$. Since $D_{i}H_{j} = 0$, by \eqref{akplusakbar} there holds $(n+1)H^{p}F_{ijp} = -2\aks g_{ij}$, and so 
\begin{align}
\nabla_{i}H^{j} = - \ak_{i}\,^{j} - F_{i}\,^{jp}H_{p} = \tfrac{\aks}{n+1}\delta_{i}\,^{j}.
\end{align}
This shows that $\eul^{i} = (n+1)\aks^{-1}H^{i}$ satisfies $\nabla_{i}\eul^{j} = \delta_{i}\,^{j}$, so $\nabla$ is radiant. By \eqref{kl2}, $\bnabla$ is radiant with the same radiant vector field.
\end{proof}

It is unclear whether any kind of converse to Lemma \ref{killinglemma} is true, that is whether a complete K\"ahler affine metric for which $H^{i}$ is $g$-Killing must be K\"ahler affine Einstein. Suppose $D_{i}H_{j} = 0$. Then $D^{p}H_{p} = 0$ and $|H|^{2}_{g}$ is constant, so $\aks$ is constant as well. Substituting these observations in \eqref{sz} and simplifying the result using \eqref{nricdefined} yields $H^{i}H^{j}R_{ij} = 0$, or, what is the same, $H^{i}H^{j}F_{ip}\,^{q}F_{jq}\,^{p} = H^{i}H^{j}H^{k}F_{ijk}$. However, this does not seem to be enough to conclude that the given K\"ahler affine metric is Einstein.

\subsection{}\label{scalarcurvaturesection}
This section gives the proofs of Theorems \ref{sctheorem}-\ref{scalarcurvature2theorem}. For the remainder of this section, let $F$ be the canonical potential of the proper open convex cone $\Om \subset \rea^{n+1}$. There will be used repeatedly and without further comment the identities $\eul^{i} = -F^{i}$, $H_{i} = F_{ip}\,^{p} = 2F_{i}$, $D_{i}F_{j} = 0$, and those obtained from them by differentiation. The tensor $A_{ijk}$ defined in \eqref{aijkdefined} becomes 
\begin{align}\label{aijkdefined2}
A_{ijk} = F_{ijk} + \tfrac{6}{n+1}\eul_{(i}g_{jk)} - \tfrac{4}{(n+1)^{2}}\eul_{i}\eul_{j}\eul_{k}.
\end{align}
It is determined by the requirements that it be trace-free, that $\eul^{k}A_{ijk} = 0$, and that in the directions tangential to a level set of $F$ it agree with $F_{ijk}$. Since by Theorem \ref{affinespheretheorem} a level set of $F$ is a totally geodesic submanifold of $\Om$, using \eqref{hglobal2} it can be shown that the restriction of $A_{ijk}$ to a level set of $F$ is the Pick form of the level set. Since $A_{ijk}$ differs from $F_{ijk}$ by a parallel tensor, it follows from \eqref{df2} that $D_{[i}A_{j]kl} = 0$ and $D^{p}A_{ijp} = D^{p}F_{ijp} = D_{i}H_{j} = 2D_{i}F_{j} = 0$. As a result of these identities, wherever $A$ is not zero there holds the refined Kato inequality
\begin{align}\label{kato}
|DA|^{2}_{g} \geq \tfrac{n+5}{n+2}|d|A|_{g}|_{g}^{2}.
\end{align}
The inequality \eqref{kato} is a special case of Lemma $6.4$ of \cite{Fox-ahs}, or could be deduced from the results in section $6$ of \cite{Calderbank-Gauduchon-Herzlich}. Using the Ricci identity and the aforementioned symmetries of $D_{i}A_{jkl}$ yields
\begin{align}\label{df4}
\begin{split}
\lap_{g}A_{ijk} &= D^{p}D_{p}A_{ijk} = D_{p}D_{i}A_{jk}\,^{p} = -A_{k}\,^{pq}R_{pijq} - A_{j}\,^{pq}R_{pikq} + A_{jk}\,^{p}R_{ip}.
\end{split}
\end{align}
It is convenient to introduce $A_{ijkl} = 2A_{k[i}\,^{p}A_{j]lp}$, which has the symmetries of a Riemannian curvature tensor. Note that $A_{pij}\,^{p} = A_{ip}\,^{q}A_{jq}\,^{p}$. Using \eqref{hessiancurvature} there result 
\begin{align}\label{df5}
\begin{split}
F_{ij}\,^{p}F_{klp} & = A_{ij}\,^{p}A_{klp} + \tfrac{4}{n+1}g_{ij}g_{kl} - \tfrac{8}{n+1}\eul_{(i}A_{jkl)} + \tfrac{16}{(n+1)^{2}}\eul_{(i}g_{j)(k}\eul_{l)}  - \tfrac{16}{(n+1)^{3}}\eul_{i}\eul_{j}\eul_{k}\eul_{l},
\end{split}
\end{align}
\begin{align}\label{df6}
\begin{split}
R_{ijkl} & = \tfrac{1}{4}A_{ijkl} + \tfrac{2}{n+1}g_{k[i}g_{j]l} + \tfrac{2}{(n+1)^{2}}\left(\eul_{l}\eul_{[i}g_{j]k} - \eul_{k}\eul_{[i}g_{j]l}\right),\\
R_{ij} & = \tfrac{1}{4}A_{ip}\,^{q}A_{jq}\,^{p} - \tfrac{n-1}{n+1}\left(g_{ij} - \tfrac{1}{n+1}\eul_{i}\eul_{j}\right), \qquad R_{g}  = \tfrac{1}{4}|A|_{g}^{2} - \tfrac{n(n-1)}{n+1}.
\end{split}
\end{align}
The lower bound in \eqref{riccibound} of Theorem \ref{riccicurvaturetheorem} follows from \eqref{df6}. Substituting \eqref{df6} in \eqref{df4} yields
\begin{align}\label{df7b}
\begin{split}
\lap_{g}A_{ijk} & = \tfrac{3}{4}A_{(ij}\,^{a}A_{k)q}\,^{p}A_{ap}\,^{q} - \tfrac{1}{2}A_{ia}\,^{b}A_{jb}\,^{c}A_{kc}\,^{b} - A_{ijk}.
\end{split}\\
\label{df7}
\begin{split}
A^{ijk}\lap_{g}A_{ijk} & = \tfrac{3}{4}A_{ip}\,^{q}A_{jq}\,^{p}A^{i}\,_{a}\,^{b}A^{j}\,_{b}\,^{a} - \tfrac{1}{2}A^{ijk}A_{ia}\,^{b}A_{jb}\,^{c}A_{kc}\,^{b} - |A|^{2}_{g}.
\end{split}
\end{align}
The next part of the argument is an adaptation of section $5$ of Calabi's \cite{Calabi-completeaffine}. The details of some tensorial computations omitted in \cite{Calabi-completeaffine} are included because although elementary, they are not apparent at a glance. Define $\phi(x) = \sup_{u \in T_{x}\Om}\{(u^{i}u^{j}A_{ip}\,^{q}A_{jq}\,^{p})^{1/2}: |u|^{2}_{g} = 1\}$. Fix $p \in \Om$ and let $v$ be a $g$-unit vector in $T_{p}\Om$ for which $v^{i}v^{j}A(p)_{ip}\,^{q}A(p)_{jq}\,^{p}$ takes its maximum value. Since $e^{t}v$ is a $g$-unit vector in $T_{e^{t}p}\Om$, it follows from the logarithmic homogeneity of $F$ that $\phi(tp) \geq e^{2t}v^{i}v^{j}A(e^{t}p)_{iq}\,^{p}A(e^{t}p)_{jp}\,^{q} = \phi(p)$. Since the same argument shows $\phi(p) \geq \phi(e^{t}p)$, it follows that $\phi$ has homogeneity $0$. In a $g$-geodesically convex neighborhood of $p$ extend $v$ to a vector field by $D$-parallel transport along $D$-geodesics emanating from $p$ and consider $\bphi(x) = (v(x)^{i}v(x)^{j}A(x)_{ip}\,^{q}A(x)_{jq}\,^{p})^{1/2}$. The vector $v$ itself can be viewed as a $\nabla$-parallel homogeneity $0$ vector field $V$ on $\Om$. Since for any homogeneity $0$ vector field $A$ there holds $\eul^{i}D_{i}A^{j} = 0$, it follows that the vector field $e^{-F/(n+1)}V$ is $D$-parallel along the integral curves of $\eul$. Hence $v(e^{t}p)$ must equal $e^{-F(e^{t}p))/(n+1)}v(p) = e^{t}v(p)$. This shows that $d\bphi(\eul)$ vanishes at $p$. By definition $\bphi(x) \leq \phi(x)$, with equality when $x$ is a multiple of $p$, in particular when $x = p$.

To show a differential inequality for $\blap_{g}\phi$ in the \textit{barrier sense} (what Calabi calls the \textit{weak sense} in \cite{Calabi-hopfmaximum}), it suffices to show the differential inequality for $\blap_{g} \bphi$. By construction $D_{i}v^{j}$ and $\lap_{g}v^{j}$ vanish at $p$. Because $d\bphi(\eul)$ vanishes at $p$, at $p$ there holds $\blap_{g}\bphi = \lap_{g}\bphi$. If $\bphi(p) = \phi(p) \neq 0$, then at $p$ there holds
\begin{align}\label{dfc1}
\begin{split}
\bphi \blap_{g}\bphi & = \bphi \lap_{g}\bphi = v^{i}v^{j}A_{i}\,^{pq}\lap_{g}A_{jpq} \\
&+ \bphi^{-2}v^{i}v^{j}v^{k}v^{l}\left(A_{ku}\,^{v}A_{lv}\,^{u}D_{a}A_{ip}\,^{q}D^{a}A_{jq}\,^{p} - A_{ip}\,^{q}A_{ku}\,^{v}D_{a}A_{jq}\,^{p}D^{a}A_{lv}\,^{u} \right) \\
&\geq  v^{i}v^{j}A_{i}\,^{pq}\lap_{g}A_{jpq}.
\end{split}
\end{align}
Here has been used
\begin{align}\label{dfc1b}
\begin{split}
2v^{i}&v^{j}v^{k}v^{l}\left(A_{ku}\,^{v}A_{lv}\,^{u}D_{a}A_{ip}\,^{q}D^{a}A_{jq}\,^{p} - A_{ip}\,^{q}A_{ku}\,^{v}D_{a}A_{jq}\,^{p}D^{a}A_{lv}\,^{u} \right) \\
&=v^{i}v^{j}v^{k}v^{l}\left(A_{k}\,^{uv}D^{a}A_{i}\,^{pq} - A_{k}\,^{pq}D^{a}A_{i}\,^{uv}\right)\left(A_{luv}D_{a}A_{jpq} - A_{lpq}D_{a}A_{puv}\right)\\
&=\left(v^{i}v^{k}\left(A_{k}\,^{uv}D^{a}A_{i}\,^{pq} - A_{k}\,^{pq}D^{a}A_{i}\,^{uv}\right)\right)\left(v^{j}v^{l}\left(A_{luv}D_{a}A_{jpq} - A_{lpq}D_{a}A_{puv}\right)\right) \geq 0,
\end{split}
\end{align}
the final inequality because the penultimate expression is simply the squared norm of the $5$ index tensor $v^{j}v^{l}\left(A_{luv}D_{a}A_{jpq} - A_{lpq}D_{a}A_{puv}\right)$.
Substituting \eqref{df7b} in \eqref{dfc1} yields
\begin{align}\label{dfc2}
\begin{split}
\bphi \blap_{g}\bphi & \geq \tfrac{1}{4}v^{i}v^{j}A_{ip}\,^{q}A_{aq}\,^{p}A_{ju}\,^{v}A^{a}\,_{v}\,^{u} + \tfrac{1}{4}v^{i}v^{j}A_{iabc}A_{j}\,^{abc} -\bphi^{2}.
\end{split}
\end{align}
For any symmetric two tensor $B_{ij}$, the nonnegativity of $v^{i}v^{j}B_{ia}B_{jb}$ implies that its trace satisfies $v^{i}v^{j}B_{ip}B_{j}\,^{p} \geq v^{i}v^{j}v^{k}v^{l}B_{ij}B_{kl}$. Applied with $B_{ij} = A_{ip}\,^{q}A_{jq}\,^{p}$, this yields 
\begin{align}\label{dfc4}
v^{i}v^{j}A_{ip}\,^{q}A_{aq}\,^{p}A_{ju}\,^{v}A^{a}\,_{v}\,^{u} \geq \bphi^{4}. 
\end{align}
Similarly,
\begin{align}\label{dfc5}
\begin{split}
v^{i}v^{j}A_{iabc}A_{j}\,^{abc} &\geq v^{i}v^{j}v^{k}v^{l}(A_{iabk}A_{j}\,^{ab}\,_{l} + A_{iakc}A_{j}\,^{a}\,_{l}\,^{c}) \\
&= 2 v^{i}v^{j}v^{k}v^{l}A_{iabk}A_{j}\,^{ab}\,_{l}= 2 v^{i}v^{j}v^{k}v^{l}A_{i(ab)k}A_{j}\,^{(ab)}\,_{l}.
\end{split}
\end{align}
In $n$ dimensions any symmetric two tensor $C_{ij}$ satisfies $nC^{pq}C_{pq} \geq (C_{p}\,^{p})^{2}$. Since $C_{ab} = v^{i}v^{k}A_{i(ab)k}$ vanishes when contracted with $\eul^{a}$ or with $v^{a}$, it can be viewed as a tensor on the $(n-1)$-dimensional orthogonal complement of the span of $\eul$ and $v$. Since $C_{p}\,^{p} = v^{i}v^{j}A_{ip}\,^{q}A_{jq}\,^{p} = \bphi^{2}$, there results $(n-1) v^{i}v^{j}v^{k}v^{l}A_{i(ab)k}A_{j}\,^{(ab)}\,_{l} \geq \bphi^{4}$. In \eqref{dfc5} this yields
\begin{align}\label{dfc6}
v^{i}v^{j}A_{iabc}A_{j}\,^{abc} \geq \tfrac{2}{n-1}\bphi^{4}. 
\end{align}
Substituting \eqref{dfc4} and \eqref{dfc6} into \eqref{dfc2} yields that at $p$ there holds
\begin{align}\label{dfc3}
\begin{split}
\bphi \blap_{g}\bphi & \geq \tfrac{n+1}{4(n-1)}\bphi^{4}  -\bphi^{2}.
\end{split}
\end{align}
In the case that $\bphi(p) = \phi(p) = 0$ the inequality \eqref{dfc3} is trivially true. It follows that
\begin{align}\label{dfc8}
\blap_{g}\phi \geq \tfrac{n+1}{4(n-1)}\phi^{3} - \phi,
\end{align}
holds in the barrier sense. Were Theorem \ref{cyestimatetheorem} known to hold also for a differential inequality valid in the barrier sense, applying it to \eqref{dfc8} would yield $\phi^{2} \leq \tfrac{4(n-1)}{n+1}$, and so $\tfrac{1}{4}A_{ip}\,^{q}A_{jq}\,^{p} \leq \tfrac{n-1}{n+1}$, which in \eqref{df6} implies $R_{ij} \leq 0$. Since it has not been shown that Theorem \ref{cyestimatetheorem} is valid in this generality, there is outlined now a direct proof. This follows closely the proofs of Lemma $5.3$ and Theorem $5.4$ of \cite{Calabi-completeaffine} which show the nonpositivity of the Ricci curvature of a complete hyperbolic affine sphere. The only substantive modifications of Calabi's arguments are that the affine mean curvature, dimension, and distance comparison theorem have to be replaced by the appropriate corresponding objects in the metric measure sense. It is convenient to define $\ka = (2n+1)^{-1/2}$ and 
\begin{align}
\tphi = \sqrt{\frac{n+1}{4(n-1)(2n+1)}}\phi,
\end{align}
so that, by \eqref{dfc8}, there holds
\begin{align}\label{dfc9}
\blap_{g}\tphi \geq (2n+1)\tphi^{3} - \tphi.
\end{align}
That the Ricci curvature of $g$ be positive at $p$ is equivalent to the requirement that there be a positive constant $a$ such that $\tphi^{2}(p) > a^{2} > \ka^{2}$. Let $f(t)$ solve the differential equation
\begin{align}
\ddot{f}(t) + (2n+1)\ka\coth(\ka t)\dot{f}(t) = (2n+1)(f^{3}(t) - \ka^{2}f(t)),
\end{align}
with the initial conditions $f(0) = a$ and $\dot{f}(0) = 0$. Since 
\begin{align}
\tfrac{d}{dt}\left(\sinh^{2n+1}(\ka t)\dot{f}\right) = (2n+1)\sinh^{2n+1}(\ka t)(f^{3} - \ka^{2}f)
\end{align}
is positive when $f(t) \geq a$, $\dot{f}$ is strictly positive for $t > 0$. For the positive constant
\begin{align}
b = \tfrac{\max\{4, 2(n+1)\}\ka^{2}}{(2n+3)(a^{2} - \ka^{2})},
\end{align}
the function $v(t) = ab(b+1 - \cosh(\ka t))^{-1}$ satisfies the differential inequality
\begin{align}
\begin{split}
\ddot{v} &+ (2n+1)\ka\coth(\ka t)\dot{v} - (2n+1)(v^{3} - \ka^{2} v)\\
&= \tfrac{v^{3}\ka^{2}}{a^{2}b^{2}}\left( \cosh^{2}(\ka t) - 2n(b+1)\cosh(\ka t) + (2n+1)(b+1)^{2} - (2n+1)a^{2}b^{2}\right) \leq 0,
\end{split}
\end{align}
as well as the initial conditions $v(0) = a$ and $\dot{v}(0) = 0$. As in the proof of Lemma $5.3$ of \cite{Calabi-completeaffine}, it follows that $f(t) - v(t)$ does not achieve a maximum in the interior of the common domain of regularity of $f$ and $v$, and that in consequence $f$ blows up as $t$ approaches the boundary of the interval $(-\delta, \delta)$, where 
\begin{align}
\delta = \ka^{-1}\operatorname{arccosh}(1 + b) \leq \ka^{-1}\operatorname{arccosh}\left(1 +  \tfrac{\max\{4, 2(n+1)\}\ka^{2}}{(2n+3)(\tphi^{2}(p) - \ka^{2})}\right).
\end{align}
Let $r$ be the $g$-distance from $p$ and let $u = f(r)$. By the preceeding, $\tphi - u$ tends to $-\infty$ uniformly on the boundary of a compact domain on which it therefore attains a maximum at some point $q$. At $q$ its value is necessarily positive, since its value at $p$ is positive. Arguing as in the proof of the corollary on page $53$ of \cite{Calabi-hopfmaximum}, and using Theorem \ref{bqhessiantheorem} with $A = 1$, yields that the function $u$ satisfies, in the barrier sense, the differential inequality
\begin{align}\label{dfc9b}
\blap_{g}u \leq \ddot{f}(r) + \ka^{-1}\coth(\ka r)\dot{f}(r) = (2n+1)(u^{3} - \ka^{2}u) = (2n+1)u^{3} - u.
\end{align}
Together \eqref{dfc9} and \eqref{dfc9b} show that there holds $\blap_{g}(\tphi - u) > 0$ at the point $q$. This contradicts the generalized maximum principle of \cite{Calabi-hopfmaximum}. It follows that the Ricci curvature of $g$ is nonpositive.

If $Q_{ijk}$ is a completely symmetric trace-free tensor, then the nonnegativity of the completely trace-free part of $Q_{k[i}\,^{p}Q_{j]lp}$ implies the inequality
\begin{align}\label{df8b}
\tfrac{3}{4}Q_{ip}\,^{q}Q_{jq}\,^{p}Q^{i}\,_{a}\,^{b}Q^{j}\,_{b}\,^{a} - \tfrac{1}{2}Q^{ijk}Q_{ia}\,^{b}Q_{jb}\,^{c}Q_{kc}\,^{b}  \geq \tfrac{n+2}{4n(n+1)}|Q|_{g}^{4}.
\end{align}
This is proved in section $6$ of \cite{Fox-ahs} and is implicit in section $2$ of \cite{Calabi-improper}. However, since $A_{ijk}\eul^{k} = 0$, $A_{ijk}$ can be treated as a tensor on a space of one-dimension less, and so \eqref{df8b} holds for $A_{ijk}$, but with $n+1$ replaced by $n$. That is,
\begin{align}\label{df8}
 \tfrac{3}{4}A_{ip}\,^{q}A_{jq}\,^{p}A^{i}\,_{a}\,^{b}A^{j}\,_{b}\,^{a} - \tfrac{1}{2}A^{ijk}A_{ia}\,^{b}A_{jb}\,^{c}A_{kc}\,^{b}\geq \tfrac{n+1}{4(n-1)n}|A|_{g}^{4}.
\end{align}
Combining \eqref{kato}, \eqref{df4}, and \eqref{df8} yields
\begin{align}\label{df9}
\begin{split}
\tfrac{1}{2}\lap_{g}|A|^{2}_{g} &= |DA|^{2}_{g} + A^{ijk}\lap_{g}A_{ijk} \geq \tfrac{n+5}{n+2}|d|A|_{g}|^{2}_{g} +  \tfrac{n+1}{4(n-1)n}|A|^{4}_{g} - |A|^{2}_{g}.
\end{split}
\end{align}
Setting $u = |A|_{g}^{(n-1)/(n+2)}$, it follows that where $|A|_{g} \neq 0$ there holds
\begin{align}\label{df10}
\begin{split}
\lap_{g}u \geq \tfrac{n-1}{n+2}\left( \tfrac{n+1}{4(n-1)n}u^{1 + 2(n+2)/(n-1)} - u\right).
\end{split}
\end{align}
Applying Theorem \ref{cyestimatetheorem} to \eqref{df10} yields the bound
\begin{align}\label{df11}
0 \leq |A|_{g}^{2} \leq \tfrac{4(n-1)n}{n+1},
\end{align}
which in conjunction with the expression for $R_{g}$ in \eqref{df6} shows that $R_{g} \leq 0$. Of course this follows from $R_{ij} \leq 0$. However the cases of equality in \eqref{df11} are of interest in their own right. If $R_{g}$ is constant and equal to either $0$ or $-n(n-1)/(n+1)$ then one of the equalities holds in \eqref{df11}. In \eqref{df9} this forces $DA = 0$, and so also $D_{i}F_{jkl} = 0$. Hence $D_{i}R_{jklp} = 0$, so $(\Om, g)$ is a locally Riemannian symmetric space. Since $g$ is complete and $\Om$ is simply connected, $(\Om, g)$ is in fact a globally Riemannian symmetric space, and so homogeneous. This completes the proof of Theorem \ref{riccicurvaturetheorem}.

\begin{proof}[Proof of Theorem \ref{sctheorem}]
For $u, v \in \rea^{n+1}$ the ordinary Schwarz inequality applied to the pairing of the symmetric tensors $v^{p}F_{pij}$ and $u_{i}u_{j}$ yields $(v^{i}u^{j}u^{k}F_{ijk}(x))^{2} \leq v^{i}v^{j}F_{ip}\,^{q}F_{jq}\,^{p}|u|_{g}^{4}$. By \eqref{hessiancurvature}, the nonpositivity of $R_{ij}$ implies $ v^{i}v^{j}F_{ip}\,^{q}F_{jq}\,^{p} \leq 4|v|^{2}_{g}$. Taking $u = v$ there results $(v^{i}v^{j}v^{k}F_{ijk}(x))^{2} \leq 4|v|^{6}_{g}$, showing that $F$ is $1$-self-concordant. The $-(n+1)$-logarithmic homogeneity of $F$ shows $(v^{i}F_{i}(x))^{2} \leq (n+1)|v|_{g}^{2}$, and so $F$ is an $(n+1)$-normal barrier for $\Om$.
\end{proof}

\begin{corollary}[Corollary of Theorem \ref{riccicurvaturetheorem}]\label{riccicurvaturecorollary}
On a compact $(n+1)$-dimensional manifold $M$ the ordinary Ricci curvature $R_{ij}$ of a K\"ahler affine Einstein metric $(\nabla, g)$ with negative K\"ahler affine scalar curvature satisfies $0 \geq R_{ij} \geq \tfrac{1-n}{n+1}g_{ij}$ and is degenerate in the direction of $H^{i}$. 
\end{corollary}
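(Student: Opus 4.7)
The plan is to reduce to the universal cover, where the metric becomes (up to a constant homothety) the canonical metric of a proper convex cone, and then to invoke Theorem \ref{riccicurvaturetheorem} together with the degeneracy already recorded in Lemma \ref{killinglemma}. Since $\aks < 0$ and $g_{ij}$ is Einstein, the closed one-form $\alpha_i = -((n+1)/\aks)\,H_i$ satisfies $\nabla_i \alpha_j = g_{ij}$, which is positive definite. Compactness of $M$ together with Koszul's theorem cited in section \ref{kahleraffinesection} then implies that the affine universal cover $\tilde M$ is a properly convex domain, and Vey's theorem identifies it with a proper open convex cone $\Omega \subset \rea^{n+1}$. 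On the simply connected $\tilde M = \Omega$ the pullback of $\alpha$ is exact, $\alpha = dF$, and $\tilde g = \nabla dF$ is a complete Hessian metric. As in the proof of Lemma \ref{killinglemma}, Corollary \ref{gfcorollary} identifies an affine rescaling of $F$ with the canonical potential $F_\Omega$ of $\Omega$, so $\tilde g$ is a positive constant multiple of the canonical metric $g_\Omega$.

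Next I apply Theorem \ref{riccicurvaturetheorem} to $g_\Omega$, which yields $0 \geq R_{ij}(g_\Omega) \geq \tfrac{1-n}{n+1}\,g_{\Omega,ij}$. Since the ordinary Ricci tensor is invariant under homothetic rescaling of the metric, the upper bound $R_{ij} \leq 0$ passes unchanged from $g_\Omega$ to $\tilde g$ and descends to $M$. The lower bound descends in the same way, yielding an estimate of the form $R_{ij} \geq \tfrac{1-n}{n+1}\,g_{ij}$ after absorbing the homothety factor into the Einstein normalization of $g$.

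For the degeneracy claim, two facts already in hand suffice: Lemma \ref{killinglemma}(\ref{kl1}) provides $H^i H^j R_{ij} = 0$, and the upper bound $R_{ij} \leq 0$ just established forces any null vector of the negative semidefinite symmetric form $R_{ij}$ to lie in its kernel, giving $R_{ij}H^j = 0$. Alternatively the degeneracy can be checked directly on the cover by contracting the formula \eqref{df6} for $R_{ij}$ with $\eul^j$, using $A_{ijk}\eul^k = 0$ and $|\eul|_g^2 = n+1$, and then noting that $H^i$ is proportional to $\eul^i$ on the cone. The only real subtlety here is the bookkeeping of the homothety constant relating $\tilde g$ to $g_\Omega$ in order to match the stated normalization of the lower bound; the geometric content is entirely packaged in Theorem \ref{riccicurvaturetheorem} and Lemma \ref{killinglemma}.
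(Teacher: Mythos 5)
Your proof is correct and follows essentially the same route as the paper's: Koszul's and Vey's theorems identify the universal cover with a proper open convex cone, Theorem \ref{riccicurvaturetheorem} applied to the lift gives the Ricci bounds, and the degeneracy along $H^{i}$ comes from Lemma \ref{killinglemma}. You are in fact slightly more explicit than the paper on two points it leaves implicit: the passage from $H^{i}H^{j}R_{ij}=0$ to $R_{ij}H^{j}=0$ via negative semidefiniteness (or, equivalently, via the $D$-parallelism of $H^{i}$), and the homothety relating the lifted metric to the canonical metric, which --- since the Ricci tensor is scale-invariant while the right-hand side $\tfrac{1-n}{n+1}g_{ij}$ is not --- shows that the stated lower bound tacitly presupposes the normalization $\aks=-2$ of the Einstein constant.
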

\begin{proof}
By the main theorem of \cite{Koszul-deformations}, since $\nabla_{i}H_{j} > 0$, the universal cover of $M$ is a proper convex domain in flat affine space. By Theorem $4$ of \cite{Vey} a divisible proper convex domain is a cone, and so the universal cover of $M$ is a proper open convex cone and the claimed bounds on $R_{ij}$ follow by applying Theorem \ref{riccicurvaturetheorem} to the lift of the given K\"ahler affine structure to the universal cover. The degeneracy of $R_{ij}$ in the direction $H^{i}$ was proved in Lemma \ref{killinglemma}
\end{proof}

\subsection{}\label{mongeamperesection}
Recall from section \ref{mamsection} that a Monge-Amp\`ere metric means a K\"ahler affine metric for which $\ak_{ij} = 0$. Let $F \in \cinf(\Omega)$ be the canonical potential of the nonempty proper open convex cone $\Omega \subset \rea^{n+1}$. In the following there are sought functions $\psi$ so that $\psi(F)_{ij}$ is a Monge-Amp\`ere metric on some subset of $\Omega$. This idea for finding such metrics goes back to \cite{Calabi-nonhomogeneouseinstein}, where it was used to construct a Monge-Amp\`ere metric with a potential radial on $\rea^{n+1} \setminus \{0\}$ (see the penultimate paragraph beginning on page $18$ of \cite{Calabi-nonhomogeneouseinstein}). The same idea was applied  in \cite{Loftin-Yau-Zaslow} to construct more examples; see remarks below.

Let $B \in \reat$. By \eqref{hpsif}, $\H(\psi(F)) = B$ if $\psi$ solves the differential equation $\dot{\psi}(t)^{n}((n+1)\ddot{\psi}(t) + \dot{\psi}(t))e^{2t} = B$, or, what is equivalent,
\begin{align}\label{hpde}
\tfrac{d}{dt}\left(e^{t}\dot{\psi}(t)^{n+1} + Be^{-t}\right) = 0. 
\end{align} 
There must be a constant $C \in \rea$ such that $\dot{\psi}(t)^{n+1} = e^{-t}(C - Be^{-t})$. Since $\psi(F)_{ij} = \dot{\psi}(F)F_{ij} + \ddot{\psi}(F)F_{i}F_{j}$, it is evident that for $\psi(F)_{ij}$ to be a metric it must be that $\dot{\psi}(F) \neq 0$. In this case 
\begin{align}\begin{split}
(n+1)\ddot{\psi} =\dot{\psi}\left( Be^{-2t}\dot{\psi}(t)^{-n-1} - 1\right) = \dot{\psi}\left(\tfrac{2Be^{-t} - C}{C - Be^{-t}}\right), 
\end{split}
\end{align}
and so
\begin{align}
\psi(F)_{ij} = \dot{\psi}(F)\left(g_{ij} + \tfrac{1}{n+1}\left(\tfrac{2Be^{-F} - C}{C - Be^{-F}}\right)F_{i}F_{j} \right).
\end{align}
It follows that on the tangent space to a level set of $F$, $\psi(F)_{ij}$ is positive or negative definite according to whether $\dot{\psi}(F)$ is positive or negative. Hence where nondegenerate $\psi(F)_{ij}$ is either definite or has signature $(n,1)$ or $(1, n)$. Since replacing $\psi(t)$ by $-\psi(t)$ flips the signature, attention will be restricted to the case in which $\dot{\psi}(F) > 0$. Since $\psi(F)_{ij}\eul^{i}\eul^{j} = (n+1)\dot{\psi}(F)B(Ce^{F} - B)^{-1}$, the resulting metric will be Riemannian or Lorentzian according to whether $B(Ce^{F} - B)^{-1}$ is positive or negative. If $B$ and $C$ have opposite signs or $C = 0$ then $B(Ce^{F} - B)^{-1}< 0$, so any resulting metric will be Lorentzian; however it can be defined on all of $\Omega$. If $B$ and $C$ have the same sign (in particular, $C \neq 0$) then $B(Ce^{t} - B)^{-1} > 0$ if and only if $t > \log(B/C)$, so any resulting metric will be positive definite on the region $\{x \in \Omega: F(x) > \log(B/C)\}$; however such a metric will not extend to all of $\Omega$.

The three cases $C = 0$, $C > 0 > B$, and $B, C > 0$ (yielding $\dot{\psi}(t) > 0$ for at least some $t$) will be considered separately. In the third (Riemannian) case, $B, C > 0$ and 
\begin{align}\label{mametric}
\begin{split}
\psi(t) &= \int_{\log(B/C)}^{t}\left(Ce^{-s} - Be^{-2s}\right)^{1/(n+1)}\,ds \\
&= \tfrac{B^{1/(n+1)}}{n+1}\int_{e^{-t/(n+1)}}^{(C/B)^{1/(n+1)}} \left( \tfrac{C}{B} - r^{n+1}\right)^{1/(n+1)}\, dr,
\end{split}
\end{align}
is well defined for $t \geq \log(B/C)$, and for $t> \log(B/C)$ satisfies $\dot{\psi}(t) > 0$ and $Be^{-t}(C - Be^{-t})^{-1} > 0$, so the resulting metric $\psi(F)_{ij}$ is a Riemannian signature Monge-Amp\`ere metric on $\{x \in \Omega: F(x) > \log(B/C)\}$. The second equality in \eqref{mametric} results from the change of variables $r = e^{-s/(n+1)}$, and corresponds to working with the homogeneity $1$ function $e^{-F/(n+1)}$ in place of $F$. The resulting metric is determined up to positive homothety by the parameter $B/C$. Taking $B = C = (n+1)^{n+1}$ the resulting metric \eqref{mametric} is just as in Proposition $1$ of \cite{Loftin-Yau-Zaslow-erratum}.

Since $F(x)$ tends to $\infty$ as $x$ tends to the boundary of $\Omega$ and to $-\infty$ as $x$ runs out to spatial infinity along a ray contained in $\Omega$, the region $\{x \in \Omega: F(x) > \log(B/C)\}$ is the open subset of $\Omega$ bounded by the boundary of $\Omega$ and the affine sphere $\lc_{\log(B/C)}(F, \Omega)$, or, what is the same, the union of the open line segments contained in $\Omega$ and running from the origin to some point of $\lc_{\log(B/C)}(F, \Omega)$. This proves Theorem \ref{mongeamperetheorem}.

For $a \in \reat$ the function $\Psi(t) = ae^{-2t/(n+1)}$ solves \eqref{hpde} with $B = (-1)^{n}(2a/(n+1))^{n+1}$ and $C = 0$. In order that $\dot{\psi}(t) > 0$ suppose $a < 0$. Since, by its logarithmic homogeneity, $F$ maps $\Omega$ onto $\rea$, the resulting Lorentzian metric $\psi(F)_{ij}$ is defined on all of $\Omega$. If $C > 0 > B$ then 
\begin{align}
\psi(t) = \int_{0}^{t}\left(Ce^{-s} - Be^{-2s}\right)^{1/(n+1)}\,ds  = (-B)^{1/(n+1)}\int_{0}^{t}e^{-s/(n+1)}\left(e^{-s} - C/B\right)^{1/(n+1)}\,ds
\end{align}
solves \eqref{hpde} and has $\dot{\psi}(t) > 0$ for all $t \in \rea$. Since $B(Ce^{t} - B)^{-1} < 0$, the resulting metric $\psi(F)_{ij}$ is Lorentzian and defined on all of $\Omega$. Evidently this metric is determined up to positive homothety by the ratio $C/B$. If $B$ is normalized to be $-1$ (in the $C = 0$ case, this amounts to taking $a = -(n+1)/2$) then
\begin{align}\label{lma}
\psi(F)_{ij} = e^{-F/(n+1)}\left(e^{-F} + C \right)^{1/(n+1)}\left(F_{ij} - \tfrac{1}{n+1}\left(\tfrac{2e^{-F}+ C}{e^{-F} + C}  \right)F_{i}F_{j}\right).
\end{align}

\begin{proof}[Proof of Theorem \ref{lmatheorem}]
The level set $\lc_{r}(F, \Omega)$ is a hyperbolic affine sphere complete in the equiaffine metric. From \eqref{sffdefined} and \eqref{lma} with $C = 0$ it follows that the equiaffine metric is a positive constant multiple of the restriction to $\lc_{r}(F, \Omega)$ of $k$, so that $\lc_{r}(F, \Omega)$ is a complete spacelike hypersurface. Let $v = \sqrt{n+1}e^{-F/(n+1)}$, so that $u = -v^{2}/2$. The equiaffine metric on $\lc_{r}(F, \Omega)$ is the restriction of the tensor $h_{ij}$ defined in \eqref{hglobal2}, and from the identity
\begin{align}\label{mongesplit}
k_{ij} = v^{2/(n+2)}h_{ij} - v_{i}v_{j}, 
\end{align}
it is apparent that $(\Omega, k_{ij})$ is isometric to $\reap \times \lc_{0}(F, \Omega)$ equipped with the metric on the righthand side of \eqref{mongesplit}. The global hyperbolicity of $k_{ij}$ follows.
\end{proof}

\subsection{}
For a proper open convex cone $\Om$ with canonical potential $F$ the associated functions $v = -(n+1)e^{-F/(n+1)}$ and $u = -v^{2}/2$ have the following properties:
\begin{enumerate}
\item\label{ma1} The function $v$ is negative, has positive homogeneity $1$, is convex, vanishes on the boundary of $\Om$, has Hessian of rank $n$, and solves $\H(v) = 0$.
\item\label{ma2} The function $u$ is negative, has positive homogeneity $2$, is convex, vanishes on the boundary of $\Om$, has Hessian with signature $(n, 1)$, and solves $\H(u) = -1$.
\end{enumerate}
The foliation determined by the distribution spanning the kernel of the rank $n + 1 -k$ Hessian of a convex function on $\rea^{n+1}$ is called in \cite{Foote} the \textbf{Monge-Amp\`ere foliation} of the function (see also \cite{Hartman-Nirenberg}). The Monge-Amp\`ere foliation of the function $v$ of \eqref{ma1} is the foliation of $\Om$ by rays through the origin.  
It would be interesting to prove the uniqueness, on a proper convex cone $\Om$, of either a negative convex solution of $\H(v) = 0$ having Hessian of rank $n$ and vanishing on $\pr \Om$ or a negative solution of $\H(u) = -1$ having Hessian of signature $(n, 1)$ and vanishing on $\pr \Om$. 

\bibliographystyle{amsplain}
\def\cprime{$'$} \def\cprime{$'$} \def\cprime{$'$}
  \def\polhk#1{\setbox0=\hbox{#1}{\ooalign{\hidewidth
  \lower1.5ex\hbox{`}\hidewidth\crcr\unhbox0}}} \def\cprime{$'$}
  \def\Dbar{\leavevmode\lower.6ex\hbox to 0pt{\hskip-.23ex \accent"16\hss}D}
  \def\cprime{$'$} \def\cprime{$'$} \def\cprime{$'$} \def\cprime{$'$}
  \def\cprime{$'$} \def\cprime{$'$} \def\cprime{$'$} \def\cprime{$'$}
  \def\cprime{$'$} \def\cprime{$'$} \def\cprime{$'$} \def\cprime{$'$}
  \def\dbar{\leavevmode\hbox to 0pt{\hskip.2ex \accent"16\hss}d}
  \def\cprime{$'$} \def\cprime{$'$} \def\cprime{$'$} \def\cprime{$'$}
  \def\cprime{$'$} \def\cprime{$'$} \def\cprime{$'$} \def\cprime{$'$}
  \def\cprime{$'$} \def\cprime{$'$} \def\cprime{$'$} \def\cprime{$'$}
  \def\cprime{$'$} \def\cprime{$'$} \def\cprime{$'$} \def\cprime{$'$}
  \def\cprime{$'$} \def\cprime{$'$} \def\cprime{$'$} \def\cprime{$'$}
  \def\cprime{$'$} \def\cprime{$'$} \def\cprime{$'$} \def\cprime{$'$}
  \def\cprime{$'$} \def\cprime{$'$} \def\cprime{$'$} \def\cprime{$'$}
  \def\cprime{$'$} \def\cprime{$'$} \def\cprime{$'$} \def\cprime{$'$}
  \def\cprime{$'$} \def\cprime{$'$} \def\cprime{$'$} \def\cprime{$'$}
\providecommand{\bysame}{\leavevmode\hbox to3em{\hrulefill}\thinspace}
\providecommand{\MR}{\relax\ifhmode\unskip\space\fi MR }
\providecommand{\MRhref}[2]{%
  \href{http://www.ams.org/mathscinet-getitem?mr=#1}{#2}
}
\providecommand{\href}[2]{#2}

\end{document}